\newcommand{\R}{\mathbb{R}}
\DeclareMathOperator{\diam}{diam\,}
\DeclareMathOperator{\co}{co}
\newcommand{\cconv}{\overline{\co}}
\renewcommand{\geq}{\geqslant}
\renewcommand{\leq}{\leqslant}
\newcommand{\norm}[1]{\left\Vert#1\right\Vert}
\newcommand{\spann}{\operatorname{span}}
\newtheorem{theorem}{Theorem}[section]
\newtheorem{lemma}[theorem]{Lemma}
\newtheorem{proposition}[theorem]{Proposition}
\newtheorem{corollary}[theorem]{Corollary}
\theoremstyle{definition}
\newtheorem{definition}[theorem]{Definition}
\theoremstyle{remark}
\newtheorem{remark}[theorem]{Remark}
\numberwithin{equation}{section}
\newcommand{\abs}[1]{\left\lvert#1\right\rvert}
\def\fnote#1{\footnote}
\def\blacksquare{\hbox{\vrule width 4pt height 4pt depth 0pt}}
\def\ignora#1{}
\def\n3#1{\left\vert  \! \left\vert \! \left\vert \, #1 \, \right\vert \!
  \right\vert \! \right\vert }
\let\emptyset\varnothing
\newcommand{\Q}{\mathbb{Q}}
\newcommand{\N}{\mathbb{N}}
\newcommand{\G}{\mathbb{G}}
\newcommand{\cP}{\mathcal{P}}
\newcommand{\cPi}{\mathcal{P}_\infty}
\newcommand{\cB}{\mathcal{B}}
\newcommand{\cI}{\mathcal{I}}
\newcommand{\V}{[-1,1]^V}
\newcommand{\ra}{\rightarrow}
\newcommand{\lra}{\leftrightarrow}
\newcommand{\eps}{\varepsilon}
\newcommand{\conv}{\text{co}}
\newcommand{\Fsd}{F_{\sigma\delta}}
\newcommand{\UC}{\widehat{UC}}
\newcommand{\LDP}{\widehat{LD2P}}
\newcommand{\DdosP}{\widehat{D2P}}
\newcommand{\SDP}{\widehat{SD2P}}
\newcommand{\DP}{\widehat{DP}}
\newcommand{\DLDP}{\widehat{DLD2P}}
\newcommand{\DDdosP}{\widehat{DD2P}}
\newcommand{\LOH}{\widehat{LOH}}
\newcommand{\WOH}{\widehat{WOH}}
\newcommand{\OH}{\widehat{OH}}
\newcommand{\iLDP}{\widetilde{LD2P}}
\newcommand{\iDdosP}{\widetilde{D2P}}
\newcommand{\iSDP}{\widetilde{SD2P}}
\newcommand{\iDP}{\widetilde{DP}}
\newcommand{\iDLDP}{\widetilde{DLD2P}}
\newcommand{\iDDdosP}{\widetilde{DD2P}}
\newcommand{\iLOH}{\widetilde{LOH}}
\newcommand{\iWOH}{\widetilde{WOH}}
\newcommand{\iOH}{\widetilde{OH}}
\def\interior#1{\accentset{\circ}{#1}}
\def\Vu#1{\left( \V \right)^{#1}}
\begin{document}

\author{ Gin\'es L\'opez-P\'erez }\address{Universidad de Granada, Facultad de Ciencias. Departamento de An\'{a}lisis Matem\'{a}tico, 18071-Granada
(Spain)} \email{ glopezp@ugr.es}

\author{ Esteban Martínez Vañó }\address{Universidad de Granada, Facultad de Ciencias. Departamento de An\'{a}lisis Matem\'{a}tico, 18071-Granada
(Spain)} \email{ emv@ugr.es}

\author{ Abraham Rueda Zoca }\address{Universidad de Granada, Facultad de Ciencias. Departamento de An\'{a}lisis Matem\'{a}tico, 18071-Granada
(Spain)} \email{ abrahamrueda@ugr.es}
\urladdr{\url{https://arzenglish.wordpress.com}}

\subjclass[2020]{46B20, 54E52}

\keywords{Borel complexity; Banach spaces; Diameter two properties; Daugavet property; Octahedral norms.}

\thanks{ This research has been supported  by MCIU/AEI/FEDER/UE Grant PID2021-122126NB-C31 and by Junta de Andaluc\'{\i}a Grant FQM-0185.}

\thanks{The first autor research has been also supported by MICINN (Spain) Grant CEX2020-001105-M (MCIU, AEI)}

\thanks{ The second author research has also been supported by MCIN/AEI/10.13039/501100011033 and FSE+ Grant PRE2022-101438. }

\thanks{The third author research has been also supported by Fundaci\'on S\'eneca: ACyT Regi\'on de Murcia grant 21955/PI/22 and by Generalitat Valenciana project CIGE/2022/97.
}

\title{Computing Borel complexity of some geometrical properties in Banach spaces}

\begin{abstract}
We compute the Borel complexity of some classes of Banach spaces such as different versions of diameter two properties, spaces satisfying the Daugavet equation or spaces with an octahedral norm. In most of the above cases our computation is even optimal, which completes the research done during the last years around this topic for isomorphism classes of Banach spaces.
\end{abstract}

\maketitle

\markboth{L\'OPEZ-P\'EREZ, MART\'INEZ VAÑ\'O,  RUEDA ZOCA}{COMPUTING BOREL COMPLEXITY OF SOME GEOMETRICAL ...}

\section{Introduction}
The relation between Banach space theory and descriptive set theory is not new and has proven to be very effective in the study of universality properties as \cite{dodos} perfectly shows. However, this paper will continue a more recent investigation line focused on computing the Borel complexity of well known classes of Banach spaces. 

This kind of studies began with \cite{bos1} and \cite{bos} where B. Bossard codified the class of all separable Banach spaces with a polish structure, allowing him to use the tools from descriptive set theory to show that many important families of separable Banach spaces, such as reflexive spaces, spaces with separable dual, spaces which do not contain an isomorphic copy of $\ell_1$ or spaces with the Radon-Nikodym property, are complete conanalytic (see also \cite{boslo}). His approach consisted in choosing any universal separable Banach space, like $X=C(2^\omega)$, and consider the Effros-Borel structure in $F(X)$, the family of its closed subsets. In that way he obtained a standard Borel space, that is, a measurable space whose $\sigma$-algebra is the Borel $\sigma$-algebra for some polish topology on it, in such a way that the subset $SB(X)$ of all closed linear subspaces is also a standard Borel space and, as $X$ is universal, it can be used to encode the class of all separable Banach spaces. By means of this codification B. Bossard presented numerous relevant results, but his approach have a limitation: one can only decide if some class is Borel or not, but it is meaningless to specify its Borel complexity as there isn't any canonical polish topology on $SB(X)$.

To supplement this defect in the codification G. Godefroy and J. Saint-Raymond introduced in its recent work \cite{gs-r} a family of topologies over $F(X)$, called \textit{admissible topologies}, which preserve the results of B. Bossard as they have the Effros-Borel structure as its $\sigma$-algebra, but they do so in a canonical way in the sense that one can jump between different admissible topologies by means of $\Sigma_2^0$-maps, so the Borel hierarchy is practically preserved. Even more recently, M. Cúth, M. Dole\v{z}al, M. Doucha and O. Kurka introduced in \cite{cddk1} a new codification which is equivalent to that of the admissible topologies but has some very desirable properties that makes it very useful for computing complexity classes. Using this approach, in \cite{cddk2} they achieved to compute the exact complexity class of many more classical families of Banach spaces such as the isometry classes of the $\ell_p$ and $L_p$ spaces, $c_0$ or the Gurari\u{\i} space, $QSL_p$-spaces o superreflexive spaces.

As it can be seen, most of the work has been done in the study of isomorphic properties or in the isometry class of particular spaces, but, as we will show with this paper, this new codification given in \cite{cddk1} is also very useful to compute the complexity class of some geometrical notions. In fact, based on previous results, it is natural to study the class of spaces with some diameter $2$ property, because they are the extreme case of spaces failing the Radon-Nikodym property, and that of the spaces with an octahedral norm, because G. Godefroy and B. Maurey proved in \cite{gm} that its isomorphism class is precisely that of the spaces which contains an isomorphic copy of $\ell_1$.

In that way, after a brief section for background and notation, we will use B. Bossard's results to prove in Section \ref{section:isomorclasses} that the classes of spaces with a renorming satisfying any of these properties are complete analytic, as it is expected by the relation with the RNP and the containment of  isomorphic copies of $\ell_1$.

In contrast with the above results, in Section \ref{section:compleB} we will show a method that gives us access to the dual of a separable Banach space inside $\cB$ and will allow us to prove that the isometry classes for all diameter $2$ properties are $G_\delta$-complete in the space $\cB$. This method does not seem to be adaptable to the space $\cPi$ so, in Section \ref{section:complepinfinito}, we will use a more geometrical approach to compute the optimal descriptive complexity class of all these classes for $\cPi$.

Finally, in Section \ref{section:compleocta} we will prove that the class of spaces with a (locally) octahedral norm is also $G_\delta$-complete and that the class of weakly octahedral spaces is $F_{\sigma\delta}$ in any of the spaces $\cPi$ or $\cB$.

\section{Background and notation}

Throughout the paper we will only deal with real Banach spaces. We will denote by $B_X$ and $S_X$ the unit ball and the unit sphere of $X$. We will also denote by $X^*$ the topological dual of $X$. By a \textit{slice} of the unit ball of $X$ we refer a set of the following form
$$S(B_X,f,\alpha):=\{x\in B_X:f(x)>1-\alpha\},$$
where $f\in S_{X^*}$ and $\alpha>0$.

\subsection{The seminorms spaces}

We will briefly present here the codification introduced by M. Cúth, M. Dole\v{z}al, M. Doucha and O. Kurka in \cite{cddk1} to give an idea of the type of spaces we will work with and to fix notation, but we refer to the cited paper for a complete exposition.

We will denote by $V$ the space of rational sequences with finite support and by $\cP$ the subset of $\R^V$ consisting of all the seminorms over $V$. Since this set is closed in $\R^V$, it is a polish space with the inherited product topology. It is then possible to associate to each $\mu \in \cP$ a Banach space $X_\mu$ by extending $\mu$ to $\bar{\mu}$, its unique seminorm extension in $c_{00}$, and after that passing to the natural quotient with the set
$$N_\mu=\{x \in c_{00}: \bar{\mu}(x)=0\}$$
to obtain a norm which can be completed to get $(X_\mu, \hat{\mu})$, the desired Banach space. 

Furthermore, to deal with infinite-dimensional spaces, we will consider two more subspaces of seminorms, $\cPi$ and $\cB$, which are the subsets of $\cP$ such that $X_\mu$ is infinite-dimensional and, in the case of $\cB$, such that $(c_{00}, \bar{\mu})$ is a normed space (so for $\cB$ we don't need to use the quotient to obtain $X_\mu$).

As a consequence of the above discussion, given a separable Banach space $(X, \norm{\cdot})$ with a dense sequence $\{x_n\}$, we can obtain a seminorm $\mu \in \cP$ such that $X \equiv X_\mu$ by defining for every $v=\sum a_n e_n \in V$ (with $\{e_n\}$ the canonical basis of $c_{00}$)
$$\mu(v) = \norm{\sum_{n=1}^\infty a_n x_n}.$$
The same is true for $\cPi$ and $\cB$, so all these spaces can be seen as codifications for the class of all separable Banach spaces.

We will also denote by $\bar{V}$ the quotient class of $V$ under the considered seminorm $\mu$, which will be a countable dense subset of $X\mu$. Observe that in $\cB$ we can directly work with $V$ so, in such a way, we have a universal countable dense set for every separable Banach space.

Regarding the notation, we will use the calligraphic $\cI$ when we are working indistinctly with any of the three previous seminorms spaces. Given then a property $P$ over the class of Banach spaces, we will denote its associated \textit{isomorphism class} in $\cI$ by $\widetilde{P}_{\cI}$, that is,
\begin{align}
    \widetilde{P}_{\cI}=\{\mu \in \cI: X_\mu \text{ is isomorphic to a space with the property } P\}.
\end{align}
Analogously, we will denote its \textit{isometry class} in $\cI$ by $\widehat{P}_{\cI}$. In particular, given a fixed separable Banach space $X$, we will denote its isomorphism (resp. isometry) class by $\langle X \rangle_{\simeq}^{\cI}$ (resp. $\langle X \rangle_{\equiv}^{\cI}$). If it is clear in which space we are working the subindex will be omitted.

\subsection{Diameter \texorpdfstring{$2$}{2} and Daugavet properties}

In 1963 I. K. Daugavet proved in \cite{daugavet} that the operator equation
\begin{align}\label{daug}
    \norm{\text{Id} + T} = 1 + \norm{T},
\end{align}
which is now known as Daugavet equation, holds for compact operators in $C[0,1]$. From that point onwards this equation has been extended to larger families of operators and it was proven in \cite{kssw} that just from holding for rank $1$ operators we obtain that it holds for weakly compact operators. For that reason, it is said that a Banach space $X$ has the Daugavet property if equation (\ref{daug}) holds for any rank $1$ operator $T: X \ra X$. Furthermore, in this same paper, the authors gave a geometric characterization of this property which we will take as definition:

\begin{definition}
    A Banach space $X$ has the \textit{Daugavet property} (DP) if for every $\eps>0$, $x \in S_X$ and $S$ a slice of $B_X$ there exists a $y \in S$ such that $\norm{x-y} > 2-\eps$.
\end{definition}

From this geometrical characterization it is easy to see that every slice of the ball of a Banach space with the Daugavet property has diameter $2$, so it fails to have the Radon-Nikodym property in a rotund manner. But much more can be said because a space with the DP satisfies any of the following diameter $2$ properties which can be seen as extreme cases of spaces failing the RNP, the point of continuity property and strong regularity respectively:

\begin{definition}
    A Banach space $X$ has:
    \begin{enumerate}
        \item The \textit{local diameter $2$ property} ($LD2P$) if every slice of $B_X$ has diameter 2.
        \item The \textit{diameter $2$ property} ($D2P$) if every non-empty relatively weakly open subset of $B_X$ has diameter $2$.
        \item The \textit{strong diameter $2$ property} ($SD2P$) if every convex combination of slices of $B_X$ has diameter $2$. 
    \end{enumerate}
\end{definition}

All this properties have been widely studied (see e.g. \cite{ahntt,blradv,rueda}) and have proven to be an interesting subject of study in the geometry of Banach spaces on its own. Furthermore, we can add two more intermediate properties, introduced in \cite{blrdiametral}, between this diameter $2$ properties and the Daugavet property:

\begin{definition}
    A Banach space $X$ has:
    \begin{enumerate}
        \item The \textit{diametral local diameter $2$ property} (DLD2P) if for every $\eps >0$, every slice $S$ of $B_X$ and every $x \in S \cap S_X$ there exists a $y \in S$ such that $\norm{x-y} > 2-\eps$. 
        \item The \textit{diametral diameter $2$ property} (DD2P) if for every $\eps >0$, every non-empty relatively weakly open subset $U$ of $B_X$ and every $x \in U \cap S_X$ there exists a $y \in U$ such that $\norm{x-y} > 2-\eps$. 
    \end{enumerate}
\end{definition}

We refer the reader to \cite{almt21, hlpv, mpr23}.

\subsection{Octahedral norms}

The concept of octahedral norm was introduced by G. Godefroy and B. Maurey in \cite{gm} to characterize those Banach spaces which contains an isomorphic copy of $\ell_1$, but later it was shown to be a dual property for the SD2P, that is, a Banach space has the SD2P if and only if its dual norm is octahedral \cite{BeLoRu}. For that reason, two new octahedral notions were introduced (see \cite{hlp}) in order to serve as dual notions for the LD2P and the D2P whose definitions are the following:

\begin{definition}
    A Banach space $X$ is:
    \begin{enumerate}
        \item \textit{Locally octahedral} (LOH) if for every $\eps >0$ and $x \in X$ there exists a $y \in S_X$ such that for every $t \in \R$
        $$\norm{tx+y} \geq (1-\eps)(\abs{t}\norm{x}+\norm{y}).$$ 
        \item \textit{Weakly octahedral} (WOH) if for every finite-dimensional subspace $E$ of $X$, every $f \in B_{X^*}$ and $\eps >0$, there exists a $y \in S_X$ such that for every $x \in E$
        $$\norm{x+y} \geq (1-\eps)(\abs{f(x)}+\norm{y}).$$
        \item \textit{Octahedral} (OH) if for every finite-dimensional subspace $E$ of $X$ and $\eps >0$, there exists a $y \in S_X$ such that for every $x \in E$
        $$\norm{x+y} \geq (1-\eps)(\norm{x}+\norm{y}).$$
    \end{enumerate}
\end{definition}

We refer the reader to \cite{ll21, lr20, lr21} for more background on octahedral norms.

\section{Isomorphism classes}\label{section:isomorclasses}

B. Bossard proved in \cite[Corollary 3.3]{bos} that the class of spaces with the Radon-Nikodym property and the class of spaces which do not contain an isomorphic copy of $\ell_1$ are complete coanalytic. By the results in \cite{gm}, this implies that the isomorphism class of octahedral spaces is complete analytic and suggest that the same will happen for the diameter $2$ properties and the other octahedral spaces. In this section we will prove that this is the case by following B. Bossard's work applied to the codifications presented in \cite{cddk1}.

To accomplish this we will need to work with the Pe\l{}czy\'{n}ski space, so we recall here its definition:

\begin{definition}
    The Pe\l{}czy\'{n}ski space $\mathbb{P}$ is the unique (up to isomorphism) separable Banach space with basis that contains isomorphic copies of every separable Banach space with basis as complemented subspaces.
\end{definition}

The interested reader can find a proof of the existence and uniqueness of this space in \cite[Theorem 2.d.10]{lt}.

We will need the following technical lemma:

\begin{lemma}\label{Noanal}
    Let $C$ be any subset of $\cI$ such that:
    \begin{enumerate}
        \item $\langle X_\mu \rangle_{\simeq}^{\cI} \subset C$ for every $\mu \in C$,
        \item $C$ contains the isomorphism class of the Pe\l{}czy\'{n}ski space and
        \item $I\setminus C$ contains the class of all reflexive spaces.
    \end{enumerate}
    Then, $C$ is $\Sigma_1^1$-hard in $\cI$.
\end{lemma}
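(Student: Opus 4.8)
The standard way to establish $\Sigma^1_1$-hardness of a class of separable Banach spaces is to reduce a known $\Sigma^1_1$-complete set to it via a Borel map. Since the complement of the set of reflexive spaces (equivalently, by Bossard's work, the complement of the set of spaces with separable dual, or the complement of the spaces not containing $\ell_1$) is $\Sigma^1_1$-complete, the goal is to build a Borel map $T \colon \cI \to \cI$ such that $X_{T(\mu)}$ is isomorphic to the Pe\l czy\'nski space $\mathbb P$ whenever $X_\mu$ is reflexive, and $X_{T(\mu)}$ contains an isomorphic copy of a fixed non-reflexive space (say $c_0$ or $\ell_1$) whenever $X_\mu$ is not reflexive. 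Composing with the hypotheses on $C$: in the first case $T(\mu) \in C$ by (2), and in the second case, since $X_{T(\mu)}$ is \emph{not} reflexive, hypothesis (3) forces $T(\mu) \notin C$. Hence $T$ reduces the non-reflexive spaces to $C$, giving $\Sigma^1_1$-hardness.

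The natural candidate for $T(\mu)$ is a "twisted sum" or, more simply, a direct sum construction: take $X_{T(\mu)} = \left(\sum_{n} X_\mu \oplus E_n\right)_{\text{over some scheme}}$ where the $E_n$ are chosen so that the total space is always isomorphic to $\mathbb P$ when $X_\mu$ is reflexive. A cleaner approach, which I expect is what the authors do, uses the universality property of $\mathbb P$: form the $\ell_2$-sum (or $c_0$-sum) $Y_\mu := \left(X_\mu \oplus \mathbb P\right)_{\ell_2}$ or a similar amalgam built from a basis of $\mathbb P$ interleaved with copies of $X_\mu$. When $X_\mu$ is reflexive with a basis, $X_\mu$ embeds complementably in $\mathbb P$, and by a Pe\l czy\'nski-decomposition argument one shows the amalgam is again isomorphic to $\mathbb P$; when $X_\mu$ is non-reflexive, the summand $X_\mu$ (which is complemented in $Y_\mu$) is non-reflexive, so $Y_\mu$ is non-reflexive. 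The subtlety that the map $\mu \mapsto$ (code for $Y_\mu$) must be Borel is handled using the fact (implicit in \cite{cddk1}) that standard operations on codes — taking direct sums with a fixed space, passing to subspaces spanned by a chosen sequence — are Borel, or even continuous, with respect to the topology on $\cI$.

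The main obstacle will be the reflexive-case computation: showing that the amalgamated space is genuinely \emph{isomorphic} to $\mathbb P$, not merely containing it. This is where one must invoke the precise universality of $\mathbb P$ (every separable space with a basis embeds complementably) together with the Pe\l czy\'nski decomposition technique, and one must be careful that $X_\mu$ need not have a basis — only that $X_\mu$ is a quotient of a space with a basis, or embeds in one. A standard fix is to replace $X_\mu$ by a space with a basis having $X_\mu$ as a quotient (or to work with $C(2^\omega)$-type universal spaces); alternatively, since we only need $X_{T(\mu)}$ to land in $\langle \mathbb P\rangle_\simeq$, and $\mathbb P \cong \left(\sum \mathbb P\right)_{\ell_2}$, one arranges $T(\mu)$ to be a sum of copies of $\mathbb P$ whenever $X_\mu$ is reflexive and to "break" only when $X_\mu$ is non-reflexive. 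The secondary obstacle is verifying Borel measurability of $T$; this should reduce to checking that the relevant sums and spanned-subspace operations are implemented by continuous maps on the seminorm spaces $\cP$, $\cPi$, $\cB$, which is routine but must be stated.

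Finally, once $T$ is built and shown Borel with $T^{-1}(C) = \{\mu : X_\mu \text{ not reflexive}\}$, hardness of $C$ follows from $\Sigma^1_1$-completeness of the non-reflexive class in each of the codings $\cP$, $\cPi$, $\cB$ (which itself follows from Bossard's results transported to the \cite{cddk1} framework via the equivalence with admissible topologies noted in the introduction). I would end the proof by remarking that the same $T$ works uniformly for $\cI \in \{\cP, \cPi, \cB\}$, since the constructions of sums and the embedding of $\mathbb P$ are insensitive to the particular coding used.
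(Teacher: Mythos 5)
Your reduction is set up with the wrong polarity, and this is fatal. Hypothesis (3) says that every \emph{reflexive} space lies outside $C$; it says nothing at all about non-reflexive spaces (indeed $C$ contains the whole isomorphism class of $\mathbb P$, which consists of non-reflexive spaces). So the step ``since $X_{T(\mu)}$ is not reflexive, hypothesis (3) forces $T(\mu)\notin C$'' is a non sequitur. Worse, even if you could arrange $T(\mu)\notin C$ for non-reflexive $X_\mu$ and $T(\mu)\in C$ for reflexive $X_\mu$, you would have $T^{-1}(C)=\{\mu: X_\mu \text{ reflexive}\}$, i.e.\ a Borel reduction of the \emph{reflexive} class to $C$; since that class is $\Pi_1^1$-complete, this would establish $\Pi_1^1$-hardness of $C$, not the $\Sigma_1^1$-hardness claimed. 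For the given hypotheses the reduction must run the other way: points of the chosen $\Sigma_1^1$-complete set must be sent to spaces isomorphic to $\mathbb P$ (which land in $C$ by (1) and (2)), and points of its complement must be sent to \emph{reflexive} spaces (which land outside $C$ by (3)).

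This is exactly what the paper does, and no amalgamation construction is needed: Bossard's \cite[Lemma 2.4]{bos} already supplies a Borel map $\varphi$ from the Polish space of trees into $SB(C(2^\omega))$ with $\varphi(\theta)$ reflexive when $\theta$ is well founded and $\varphi(\theta)$ isomorphic to $\mathbb P$ when $\theta$ is ill founded; composing with the continuous, isometry-preserving map $\Phi$ of \cite[Theorem 3.3]{cddk1} gives a Borel map $\psi:\mathcal T\to\cP$ with $\psi^{-1}(\cP\setminus C)=WF$, and $\Sigma_1^1$-hardness follows because the ill-founded trees form a $\Sigma_1^1$-complete set (with minor adjustments for $\cPi$ and $\cB$). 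Your secondary worries --- whether $(X_\mu\oplus\mathbb P)_{\ell_2}$ is isomorphic to $\mathbb P$ when $X_\mu$ is reflexive but has no basis, and the Borelness of the sum operation on codes --- are genuine obstacles to your construction, but they become moot once the reduction is taken from the tree space as above.
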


\begin{proof}
    Let's start by setting $\cI = \cP$.

    By \cite[Lemma 2.4]{bos} there exists a Borel map $\varphi: \mathcal{T} \ra SB(C(2^\omega))$ such that:
    \begin{enumerate}
        \item $\varphi(\theta)$ is reflexive if $\theta$ is well founded.
        \item $\varphi(\theta)$ is isomorphic to $\mathbb{P}$, the Pe\l{}czy\'{n}ski space, if $\theta$ is not well founded and,
        \item $\varphi(\theta)$ is finite dimensional if, and only if, $\theta$ is a finite tree.
    \end{enumerate} 
    On the other hand, if $\tau$ is any admissible topology on $SB(C(2^\omega))$, by \cite[Theorem 3.3]{cddk1} there is a continuous map $\Phi: (SB(C(2^\omega)), \tau) \ra \cP$ such that $F \equiv X_{\Phi(F)}$ for every $F \in SB(C(2^\omega))$. As the Borel $\sigma$-algebra of $(SB(C(2^\omega)), \tau)$ is precisely the one given by the Effros-Borel structure, this last two maps allow us to obtain the Borel map $\psi: \mathcal{T} \ra \cP$ given by $\psi = \Phi \circ \varphi$. If we show that
    \begin{align}\label{clave}
        \psi^{-1}(\cP \setminus C)=WF
    \end{align}
    where $WF$ is the set of well founded trees, as $WF$ is complete coanalytic, we will conclude that $C$ is $\Sigma_1^1$-hard as wanted.

    To prove (\ref{clave}) observe that if $\theta$ is a well founded tree, then $\varphi(\theta)$ is reflexive and so is $X_{\psi(\theta)}$ (because we know that $X_{\Phi(\varphi(\theta))} \equiv \varphi(\theta)$), which implies that $\psi(\theta) \in \cP \setminus C$. Reciprocally, if $\theta$ is not well founded, then $\varphi(\theta)$ is isomorphic to $\mathbb{P}$ and so is $X_{\psi(\theta)}$, which implies that $\psi(\theta) \in C$ as we wanted to show.

    If $\cI = \cPi$ we can change $\varphi$ and $\Phi$ in the previous argument by its restrictions to $\mathcal{T}_i$, the subset of infinite trees, and to $SB_\infty(C(2^\omega))$ respectively, and we will obtain the desired result because $\varphi$ maps infinite trees to infinite-dimensional Banach spaces and the set of infinite well founded trees is complete coanalytic in $\mathcal{T}_i$ (because we are just deleting a countable set from the set of well founded trees which is a well known complete coanalytic set). 
    
    Lastly, if $\cI = \cB$ then we can compose the two previous maps for $\cPi$ with the $\Sigma_2^0$-measurable mapping from $\cPi$ to $\cB$ described in \cite[Proposition 3.6]{cddk1} and repeat the same argument.
\end{proof}

As a consequence of Lemma \ref{Noanal} we will obtain the announced result: all the isomorphism classes we are dealing with in this paper are complete analytic in $\cPi$ and $\cB$.

\begin{theorem}
    The isomorphism classes $\iLDP$, $\iDdosP$, $\iSDP$, $\iDLDP$, $\iDDdosP$, $\iLOH$, $\iWOH$, $\iOH$ and $\iDP$ are complete analytic in $\cPi$ and $\cB$.
\end{theorem}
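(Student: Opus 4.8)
The plan is to apply Lemma~\ref{Noanal} to each of the nine isomorphism classes in turn, which will give $\Sigma_1^1$-hardness, and then to pair this with an independent argument showing that each class is analytic (i.e.\ $\Sigma_1^1$). Combining the two yields completeness for the analytic class. The verification that each class is analytic should be routine: membership of $\mu$ in any $\widetilde{P}_{\cI}$ asserts the existence of an equivalent norm on $X_\mu$ with property $P$; coding such a renorming by a real parameter and expressing the geometric condition (diameter-two behaviour of slices / weakly open sets / convex combinations of slices, or the octahedrality inequalities) with countable quantifiers over the dense set $\bar V$ shows that each condition is Borel in the pair, so the projection is $\Sigma_1^1$. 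Alternatively one can invoke the general principle that isomorphic classes closed under isomorphism which are analytic in Bossard's $SB(C(2^\omega))$ transfer to $\cI$ via the Borel codings of \cite{cddk1}; since by \cite[Corollary 3.3]{bos} the complements of the relevant classes (spaces with RNP, spaces not containing $\ell_1$) are coanalytic, the classes themselves are analytic, and this analyticity passes through the maps $\Phi$ and the $\Sigma_2^0$ map of \cite[Proposition 3.6]{cddk1}.

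The substantive point is therefore to check the three hypotheses of Lemma~\ref{Noanal} for each class $C = \widetilde{P}_{\cI}$ with $P$ ranging over $LD2P$, $D2P$, $SD2P$, $DLD2P$, $DD2P$, $LOH$, $WOH$, $OH$, $DP$. Hypothesis~(1), closure under isomorphism, is immediate from the very definition of $\widetilde{P}_{\cI}$, since it asks $X_\mu$ to be isomorphic to a space with property $P$. Hypothesis~(3), that reflexive spaces lie outside $C$, follows from the classical fact that a reflexive space cannot be renormed to have any diameter-two property (every reflexive space has a point of Fréchet differentiability / is an Asplund space with the RNP up to renorming; more directly, a space with any $LD2P$-type or octahedral renorming contains $\ell_1$ or fails RNP, both of which are isomorphic invariants ruled out by reflexivity). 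So a reflexive space is isomorphic to no space with property $P$, for every $P$ in our list.

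The main obstacle is hypothesis~(2): the Pe\l{}czy\'nski space $\mathbb{P}$ must be isomorphic to a space with each of the listed properties. The cleanest route is to observe that $\mathbb{P}$ contains a complemented copy of $\ell_1$ (indeed it contains complemented copies of all spaces with a basis, in particular $\ell_1$), hence $\mathbb{P} \cong \ell_1 \oplus \mathbb{P}$; it then suffices to know that $\ell_1 \oplus Y$ admits, for any separable $Y$, an equivalent norm with the strong diameter two property, since the $SD2P$ is the strongest property in our list and implies all the diameter-two properties, while the dual statement (octahedrality of the dual norm, equivalently $SD2P$ of the predual) handles $OH$, $WOH$, $LOH$ simultaneously. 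For the Daugavet property one argues that $\ell_1 \oplus Y$, again for suitable $Y$, can be renormed to have $DP$ — e.g.\ using that $L_1[0,1]$ has the Daugavet property and embeds complementably after renorming, or directly that any space containing $\ell_1$ isomorphically admits a $DP$ renorming by a result on equivalent Daugavet norms. I expect the precise citation to be the key technical input here; once the existence of a single $DP$ renorming of some space isomorphic to $\mathbb{P}$ is established, the inclusion $\langle \mathbb{P}\rangle_{\simeq}^{\cI} \subset C$ follows for every $C$ in the list because $DP \Rightarrow DD2P \Rightarrow D2P$, $DP \Rightarrow DLD2P \Rightarrow LD2P$, $DP \Rightarrow SD2P$, and $SD2P$ of $X$ gives octahedrality of $X^*$ while $DP$ itself gives octahedrality of $X$ (so $OH$, hence $WOH$ and $LOH$).
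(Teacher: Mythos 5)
Your overall skeleton (Lemma~\ref{Noanal} for $\Sigma_1^1$-hardness plus a separate analyticity argument, and the observation that a single Daugavet renorming of $\mathbb{P}$ suffices because $DP$ implies all eight other properties) is exactly the paper's strategy, and your treatment of hypotheses (1) and (3) is fine. But there is a genuine gap at the step you yourself flag as the key technical input: hypothesis (2), the existence of a $DP$ renorming of the Pe\l{}czy\'nski space. The general statements you invoke there are false. ``$\ell_1 \oplus Y$ admits an equivalent $SD2P$ norm for any separable $Y$'' fails already for $Y=\ell_2$ (the sum has the RNP, hence every renorming has slices of small diameter); ``any space containing $\ell_1$ isomorphically admits a $DP$ renorming'' fails for $\ell_1$ itself, again by the RNP. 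Nor does containing a complemented copy of a Daugavet space ($L_1$, say) help: the Daugavet property does not pass from one summand to a direct sum (e.g.\ $L_1\oplus_\infty\R$ fails $DP$, and $L_1\oplus_1\R$ even fails the $LD2P$), so $\mathbb{P}\cong L_1\oplus Z$ gives nothing. The paper closes this gap differently: $X=C([0,1],\mathbb{P})$ has the Daugavet property (as any $C(K,E)$ with $K$ perfect does), $\mathbb{P}$ sits $1$-complemented in $X$ via constant functions, so $X$ is a separable space with a basis containing complemented copies of all separable spaces with bases, and the uniqueness in the definition of $\mathbb{P}$ forces $X\cong\mathbb{P}$. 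Without an argument of this kind your proof does not establish hypothesis (2) for any of the nine classes.

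Two smaller points. For analyticity, your first route is essentially the paper's (each isometry class $\widehat{P}$ is Borel by the later theorems, the isomorphism relation is analytic, and $\widetilde{P}=\{\mu:\exists\nu\,(X_\nu\simeq X_\mu\ \land\ \nu\in\widehat{P})\}$ is a projection of a Borel-by-analytic set); but your alternative route via Bossard's Corollary~3.3 is a non sequitur, since the complement of the RNP class is the class of all spaces failing the RNP, not the class of spaces with a $D2P$ (or $SD2P$, etc.) renorming --- only for $\iOH$ does the Godefroy--Maurey theorem identify the class in question with the $\ell_1$-containing spaces.
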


\begin{proof}
    Observe that all the properties we are considering, which are incompatible with the reflexivity, are weaker than the Daugavet property, so it is enough to show that the Pe\l{}czy\'{n}ski space  has an equivalent norm with the Daugavet property and then an application of Lemma \ref{Noanal} concludes that all these isomorphism classes are $\Sigma_1^1$-hard.

    To this end, observe that the space $X=C([0,1], \mathbb{P})$ has the Daugavet property (c.f. e.g. \cite[Example p.81]{werner}) and that $\mathbb{P}$ has an isometric copy in $X$ which is complemented. Then, by the defining property of $\mathbb{P}$ and using that it is complemented in $X$, we obtain that $X$ contains isomorphic copies of every separable Banach space with a basis as complemented subspaces. By the universal property defining $\mathbb{P}$ we conclude that $\mathbb P$ is isomorphic to $X$, which implies that $\mathbb P$ is isomorphic to a Banach space with the Daugavet property, as desired.

    Lastly, observe that any of the isometry classes analogous to the isomorphic ones given here are Borel (see Theorems \ref{D2Pteo}, \ref{CompClasesB}, \ref{OHyLOHteo} and \ref{WOHteo}). On the other hand, the isomorphism relation is analytic (by \cite[Theorem 2.3]{bos} and \cite[Theorem 3.14]{cddk1}). Putting all together we conclude that all this isomorphism classes are analytic.
\end{proof}

These results make one wonder if the same would happen if we consider isometry classes instead of isomorphism ones. Thanks to this new codification we can answer this question and show in the following sections how different, in descriptive complexity terms, the notions of isomorphism and isometry are.

For future use along the text, we will also present here a similar result for isometry classes:

\begin{proposition}\label{NoFsig1}
    The isometry classes $\widehat{LD\delta P}$ (with $0 < \delta \leq 2$), $\DdosP$, $\SDP$, $\DLDP$, $\DDdosP$, $\LOH$, $\WOH$, $\OH$ and $\DP$ are not $F_\sigma$ in $\cI$.
\end{proposition}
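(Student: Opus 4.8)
The plan is to prove that none of these isometry classes is $F_\sigma$ by exhibiting, for each of them, a continuous reduction to it of one fixed set that is not $F_\sigma$. The set I would use is
\[
 B := \{x \in 2^{\N} : x(n) = 1 \text{ for infinitely many } n\},
\]
which is not $F_\sigma$ for a trivial reason: its complement is countable and dense, so $B$ is comeager with dense complement, and a comeager set with dense complement cannot be a countable union of closed sets. Concretely, for each class $\widehat{P}$ in the statement I will build a continuous map $f\colon 2^{\N}\to\cB$ with $f^{-1}(\widehat{P})=B$. This is enough: the preimage of an $F_\sigma$ set under a continuous map is $F_\sigma$, so $\widehat{P}$ cannot be $F_\sigma$ in $\cB$; and since $\cB\subseteq\cPi\subseteq\cP$ carry the subspace topologies and membership in $\widehat{P}$ depends only on $X_\mu$, an $F_\sigma$ realization of $\widehat{P}$ in $\cPi$ or in $\cP$ would restrict to a relative $F_\sigma$ on $\cB$, which is impossible. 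Thus it suffices to work in $\cB$.

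The basic gadget will be a Hausdorff-continuous family $x\mapsto K_x$ of infinite compact subsets of $[0,1]$ that are perfect exactly when $x\in B$. I would fix Cantor sets $C_0\subseteq[2/3,1]$ and $C_n\subseteq[\tfrac1{n+2},\tfrac1{n+1}]$ ($n\ge 1$) and put $K_x:=\{0\}\cup C_0\cup\bigcup_{n:x(n)=1}C_n$. Then every $K_x$ is compact and infinite, its only possible isolated point is $0$, and $0$ is isolated if and only if $\{n:x(n)=1\}$ is finite; hence $K_x$ is perfect iff $x\in B$. Moreover $K_x$ and $K_{x'}$ coincide outside $[0,\tfrac1{m+2}]$ whenever $x,x'$ agree on $\{1,\dots,m\}$, so $x\mapsto K_x$ is continuous for the Hausdorff metric. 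To land inside $\cB$ I would encode $C(K_x)$ using the linearly independent monomials $g_i(t)=t^{i-1}$, whose rational span is dense in $C[0,1]$: since $K_x$ is infinite, $\mu_x(v):=\max_{t\in K_x}\bigl|\sum_i v_i g_i(t)\bigr|$ is a norm on $V$ with $X_{\mu_x}\equiv C(K_x)$, and $x\mapsto\mu_x$ is continuous because each coordinate $\mu_x(v)$ is the maximum of one fixed polynomial over the Hausdorff-continuous compact set $K_x$.

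With this, the classes $\DP$, $\DLDP$, $\DDdosP$, $\OH$, $\WOH$, $\LOH$ are handled simultaneously by $f(x)=\mu_x$. If $x\in B$ then $K_x$ is perfect, so $C(K_x)$ has the Daugavet property, hence all six properties. If $x\notin B$ then $0$ is isolated and $C(K_x)\equiv\R\oplus_\infty C(K_x\setminus\{0\})$; the unit vector $(1,0)$ together with the coordinate functional of the $\R$-summand shows that no point of the associated slice lies at distance greater than $1$ from $(1,0)$, so $C(K_x)$ fails DLD2P, DD2P and DP, and a short direct computation shows it also fails local octahedrality, hence weak octahedrality and octahedrality. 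Therefore $f^{-1}(\widehat{P})=B$ for each of these six classes.

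The remaining classes $\widehat{LD\delta P}$ ($0<\delta\le 2$), $\DdosP$ and $\SDP$ require a different functor, and this is the step I expect to be the main obstacle: $C(K)$ is useless here, since it has the strong diameter two property for \emph{every} infinite compact $K$, while sum-type constructions (e.g.\ $\ell_1$-sums with ``filler'' summands) fail because each zero of $x$ contributes a persistent denting point that later ones cannot undo. What is needed is a continuous space-valued functor of $K$ whose value has a denting point precisely when $K$ has an isolated point — so that it then fails $\widehat{LD\delta P}$ for every $\delta>0$ and, consequently, D2P and SD2P as well — and has the SD2P when $K$ is perfect. The natural candidate is the Lipschitz-free space: recalling that $\mathcal{F}(M)$ has the Daugavet property exactly when $M$ is a length space, and that an isolated point of $M$, together with its nearest neighbour, yields a strongly exposed molecule of $B_{\mathcal{F}(M)}$, it would suffice to equip the family $(K_x)$ with metrics $\rho_x$ varying continuously with $x$ so that $(K_x,\rho_x)$ is a length space precisely when $x\in B$ — i.e.\ the gaps created by the zeros of $x$ must be bridged exactly when infinitely many ones survive. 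Constructing such a continuously varying family of length/non-length metrics, and checking that the induced assignment $x\mapsto\mathcal{F}(K_x,\rho_x)$ is a continuous map into $\cB$, is the technical heart of the argument; once it is in place, the reasoning above yields $f^{-1}(\widehat{P})=B$ for these three classes too, completing the proof.
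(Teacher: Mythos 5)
Your overall strategy (a continuous reduction of the non-$F_\sigma$ set $B\subset 2^{\N}$ into $\cB$, plus the restriction argument to pass to $\cPi$ and $\cP$) is sound, and the $C(K_x)$ construction does appear to handle $\DP$, $\DLDP$, $\DDdosP$, $\OH$, $\WOH$ and $\LOH$: perfectness of $K_x$ gives the Daugavet property and hence all six, while an isolated point gives the $\ell_\infty$-summand $\R$ that kills DLD2P and local octahedrality. But the proposal has a genuine gap exactly where you flag it: for $\widehat{LD\delta P}$ ($0<\delta\leq 2$), $\DdosP$ and $\SDP$ no reduction is actually produced. For these you need, in the non-perfect case, slices of arbitrarily small diameter (to defeat LD$\delta$P for every $\delta$), and in the perfect case the SD2P, and you correctly observe that $C(K)$ cannot do this; however, the Lipschitz-free replacement you sketch cannot work as stated. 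Each $K_x$ is totally disconnected (a point together with countably many Cantor sets), and a compact length space is connected (indeed geodesic), so no metric $\rho_x$ compatible with the topology of $K_x$ can make $(K_x,\rho_x)$ a length space; you would have to replace the sets themselves (e.g.\ bridging the gaps by arcs, with connectedness appearing exactly when $x\in B$) and then prove continuity of $x\mapsto \mathcal{F}(M_x)$ as a map into $\cB$, together with the relevant free-space facts (strongly exposed molecules at isolated-point pairs, SD2P/Daugavet in the length case). None of this is carried out, so the statement is not proved for a third of the classes listed, including LD2P itself.

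It is also worth noting that the paper reaches the conclusion with no construction at all, by a Baire category argument (Lemma \ref{NoFsig}): each of the classes contains the isometry class of the Gurari\u{\i} space $\G$ (which has the Daugavet property), and this isometry class is a dense $G_\delta$ in $\cI$ by \cite[Theorem 4.1]{cddk1}; if one of the classes were $F_\sigma$, its complement would be a $G_\delta$ set containing all infinite-dimensional uniformly convex spaces, hence dense by finite representability \cite[Proposition 2.9]{cddk1}, and the two dense $G_\delta$ sets would have to intersect, a contradiction. This handles all nine families (including $\widehat{LD\delta P}$ for every $\delta$) uniformly. Your reduction approach, if completed, would prove the stronger statement of $G_\delta$-hardness directly, but as it stands the missing functor for $\widehat{LD\delta P}$, $\DdosP$ and $\SDP$ is precisely the technical heart of the argument, so the proof is incomplete; if you want to keep the reduction philosophy, you should either carry out the connected-compacta/free-space construction in full or fall back on the category argument for these three families.
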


In order to prove it we will need an analogue to Lemma \ref{Noanal}, where we will use another universal space, the Gurari\u{\i} space. We include here its definition and refer the interested reader to \cite{gk} for a more complete exposition. 

\begin{definition}
    The Gurari\u{\i} space $\G$ is the unique (up to isometry) separable Banach space such that for every $\eps>0$, $F, E$ finite-dimensional normed spaces with $E \subset F$ and $f:E \ra \G$ an isometric embedding, there exists an extension of $f$ to $F$ such that it is a $(1+\eps)$-isometric embedding.
\end{definition}

\begin{remark}\label{propgurari}
    Observe that by \cite[Theorem 4.1]{cddk1} the isometry class of the Gurari\u{\i} space is a comeagre set in $\cI$. Therefore, if $P$ is any property (closed by isometries) on the class of Banach spaces and $\widehat{P}_\cI$ is a comeagre set in $\cI$, we have that this two isometry classes must intersect which proves that $\G$ has property $P$.
\end{remark}

\begin{lemma}\label{NoFsig}
    Let $C$ be any subset of $\cI$ such that:
    \begin{enumerate}
        \item $\langle X_\mu \rangle_{\equiv}^{\cI} \subset C$ for every $\mu \in C$,
        \item $C$ contains the isometry class of the Gurari\u{\i} space and,
        \item $I\setminus C$ contains the set
    $$\UC_\infty=\{\mu \in \cI: X_\mu \text{ is uniformly convex and infinite-dimensional} \}.$$
    \end{enumerate}
    Then, $C$ is not a $F_\sigma$ set in $\cI$.
\end{lemma}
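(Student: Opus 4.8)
The plan is to argue by contradiction, mimicking the structure of the proof of Lemma \ref{Noanal} but using Baire category in place of descriptive-set-theoretic completeness. Suppose $C$ were $F_\sigma$ in $\cI$; write $C = \bigcup_{n} F_n$ with each $F_n$ closed. The strategy is to produce, inside each $F_n$, a sequence of seminorms converging to a point whose associated Banach space is infinite-dimensional and uniformly convex, thereby forcing that limit into $F_n \subset C$ while hypothesis (3) places it in $\cI \setminus C$ — a contradiction. The tool that makes this possible is Remark \ref{propgurari}: since the isometry class of the Gurari\u{\i} space $\G$ is comeagre in $\cI$, it meets every nonempty open subset of $\cI$ in a nonmeagre (indeed comeagre-in-that-open-set) set. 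Because $C$ contains $\langle \G \rangle_{\equiv}^{\cI}$ by hypothesis (2), $C$ is nonmeagre in $\cI$, and hence by the Baire category theorem at least one $F_n$ has nonempty interior; fix such an $F_n$ and a nonempty open set $U \subseteq F_n$.

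The heart of the argument is then a perturbation/approximation step: I would show that starting from any $\mu \in U$ (equivalently, from the separable Banach space $X_\mu$, which we may assume contains an isometric copy of $\G$ after shrinking $U$ to meet $\langle\G\rangle_{\equiv}$) one can find seminorms arbitrarily close to $\mu$ whose spaces are infinite-dimensional and uniformly convex. Concretely, I would exploit the universal extension property of $\G$: one can amalgamate finite-dimensional pieces of $X_\mu$ with finite-dimensional subspaces of a fixed infinite-dimensional uniformly convex space (say $\ell_2$, or an $\ell_p$ with $1<p<2$) so that, on the relevant finite set of coordinates of $V$ that control closeness in the product topology of $\cP$, the new seminorm agrees with $\mu$ up to $\eps$, while globally the construction is carried out to produce a genuinely uniformly convex infinite-dimensional completion. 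Since the topology on $\cP$ is the product topology — hence determined by finitely many coordinates at a time — the resulting seminorm lies in $U$, and we may moreover arrange it to lie in $\UC_\infty$. This exhibits a point of $U \subseteq F_n \subseteq C$ lying in $\UC_\infty \subseteq \cI\setminus C$, the desired contradiction. The cases $\cI = \cPi$ and $\cI = \cB$ are handled exactly as in Lemma \ref{Noanal}, by restricting to the infinite-dimensional subspaces and, for $\cB$, composing with the $\Sigma_2^0$-measurable transfer map of \cite[Proposition 3.6]{cddk1}; here one only needs that the image of an open set is nonmeagre, which a continuous surjection with dense-range-type properties provides.

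The main obstacle I anticipate is the uniform-convexity-via-amalgamation step: one must verify that gluing a finite-dimensional fragment of an arbitrary separable space to a uniformly convex infinite-dimensional tail, in a way that is controlled on finitely many $V$-coordinates, can be done while keeping the global modulus of convexity uniformly bounded below. A clean way to sidestep delicate modulus estimates would be to instead approximate $\mu$ directly by the seminorm of a space of the form $\left(\spann\{x_1,\dots,x_k\} \oplus_p \ell_p\right)$ for suitable $p$ and a renorming that matches $\mu$ on the first $k$ basis vectors of $V$ — but even this requires checking that $\ell_p$-sums with finite-dimensional summands, after the necessary renorming to hit the prescribed coordinates, remain uniformly convex, which may fail and so must be handled with care (e.g. by only renorming on a complemented finite-dimensional piece and keeping a uniformly convex direct summand untouched). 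Establishing this density of $\UC_\infty$-approximants inside every nonempty open set of $\cI$ is exactly where the real work lies; everything else is the standard Baire-category packaging.
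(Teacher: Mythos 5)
Your argument is the same Baire-category argument as the paper's, just run in mirror image: the paper shows that $\cI\setminus C$ is a dense $G_\delta$ and intersects it with the dense $G_\delta$ isometry class of the Gurari\u{\i} space, whereas you deduce from the comeagreness of that class that some closed piece $F_n$ of $C$ has nonempty interior $U$ and then want to hit $U$ with a point of $\UC_\infty$. The two routes are logically equivalent, and both hinge on exactly one nontrivial fact: that $\UC_\infty$ (hence $\cI\setminus C$) is dense in $\cI$. This is where your write-up has a genuine gap. The paper gets this density in one line from \cite[Proposition 2.9]{cddk1}, which identifies $\overline{A}\cap\cI$ with the set of $\mu$ such that $X_\mu$ is finitely representable in $\{X_\nu:\nu\in A\}$, combined with the classical fact that every Banach space is finitely representable in the class of infinite-dimensional uniformly convex spaces. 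You instead propose a hands-on amalgamation of a finite-dimensional fragment of $X_\mu$ with a uniformly convex tail, and you correctly flag that controlling the modulus of convexity of such a glued or renormed direct sum is delicate and not established; as written, that step is not a proof.

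The gap is fillable, and more easily than your sketch suggests: since the topology on $\cI$ is the product topology, closeness to $\mu$ only constrains $\nu$ on finitely many $v\in V$, all supported on $\spann\{e_1,\dots,e_N\}$ for some $N$; the corresponding finite-dimensional subspace of $X_\mu$ embeds $(1+\delta)$-isomorphically into $\ell_p$ for $p$ large (via $\ell_\infty^M$), so one may take $\nu$ to be the seminorm induced by a suitable dense sequence in $\ell_p$ whose first $N$ terms are the images of $\bar e_1,\dots,\bar e_N$ --- no gluing, no renorming, and the ambient space is genuinely uniformly convex. (This is essentially the content of the finite-representability statement the paper invokes.) Two smaller remarks: your treatment of the case $\cI=\cB$ via the $\Sigma_2^0$-measurable transfer map is both unnecessary and problematic --- that map is not continuous and there is no reason its images of open sets are nonmeagre; instead, run the whole argument directly in $\cB$ (and in $\cPi$), since Remark \ref{propgurari} and the density of $\UC_\infty$ hold there verbatim, taking care only that the approximating sequence in $\ell_p$ is chosen linearly independent so that the seminorm lands in $\cB$. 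Also, shrinking $U$ to meet $\langle\G\rangle_{\equiv}$ plays no role and can be dropped.
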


\begin{proof}
    Firstly observe that, as any separable Banach space is finitely representable in the class of all separable uniformly convex spaces of infinite dimension, we have that
    \begin{align} \label{N1}
        \cI = \{\mu \in \cI: X_\mu \text{ is finitely representable in } \UC_\infty \}.
    \end{align}
    
    Now, if $C$ was a $F_\sigma$ set in $\cI$, then its complement would be a $G_\delta$ set in $\cI$. Furthermore, by \cite[Proposition 2.9]{cddk1} we have that
    $$\overline{\cI \setminus C} \cap \cI = \{\mu \in \cI: X_\mu \text{ is finitely representable in } \cI \setminus C\},$$
    and by (\ref{N1}) and using that $\UC_\infty \subset \cI \setminus C$ we obtain that $\cI \setminus C$ is dense in $\cI$.
    
    So, we have that $\cI \setminus C$ is a $G_\delta$ dense set in $\cI$, but by \cite[Theorem 4.1]{cddk1} the isometry class of the Gurari\u{\i} space is also a $G_\delta$ dense set in $\cI$, so by Baire's category theorem they must intersect. However, by hypothesis the isometry class of the Gurari\u{\i} space is contained in $C$ and we have the desired contradiction.
\end{proof}

\begin{proof}[Proof of Proposition \ref{NoFsig1}]
      Observe that any of this classes contains the isometry class of the Gurari\u{\i} space because it has the Daugavet property (c.f. e.g. \cite[Corollary 4.5]{aln14}) and $\UC_\infty$ is clearly contained in its complement, so by Lemma \ref{NoFsig} we conclude that, in fact, none of these classes can be $F_\sigma$.
\end{proof}

\section{Description of diameter two and Daugavet properties in \texorpdfstring{$\cB$}{B}}\label{section:compleB}

In this section we aim to analyse the complexity class of all the diameter two properties and the Daugavet property in the space $\cB$. The reason for the restriction to $\cB$ is that in this context we may have a good access to the dual of separable Banach spaces. The following, which is of independent interest, is inspired by the construction given in \cite[Section 2.1.2]{dodos}.

Given $\mu \in \cB$ we define the map
  \begin{alignat}{2}\label{Tmu}
  T_\mu: B_{X_\mu^*} &\longrightarrow& [-1,1]^V \\
  f &\longmapsto &\, T_\mu(f): V &\longrightarrow [-1,1] \nonumber\\
  & & u & \longmapsto \left\{ \begin{array}{lcc} 0 & \text{if} & u=0_V \\ \\ \dfrac{1}{\mu(u)} f(u) & \text{if} & u \not = 0_V \\ \end{array} \right. \nonumber
\end{alignat}

We will denote the image of this map by $K_\mu$. Observe then that given $g \in [-1,1]^V$ we have that
\begin{align}\label{caracKmu}
    g \in K_\mu \lra g(0_V)=0 \land \exists f \in B_{X_\mu^*}\, \forall u \in V f(u)= \mu(u)g(u).
\end{align}

It is then easy to see that $T_\mu$ is an homeomorphism from $(B_{X_\mu^*}, w^*)$ onto $K_\mu$ so, in particular, $K_\mu$ is compact.

The key result to obtain the desired complexity classes is the following lemma:

\begin{lemma}\label{Lema4}
    The set
    \begin{align}
        K=\left\{ (\mu, g) \in \cB \times \V: g \in K_\mu \right\}
    \end{align}
    is closed in $\cB \times \V$.
\end{lemma}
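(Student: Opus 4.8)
The plan is to eliminate the existential quantifier over $f$ in the characterization (\ref{caracKmu}), turning membership in $K$ into a countable conjunction of conditions that are manifestly closed. Precisely, I would first show that for $(\mu,g)\in\cB\times\V$ one has $g\in K_\mu$ if and only if: (a) $g(0_V)=0$; (b) $\mu(u+v)g(u+v)=\mu(u)g(u)+\mu(v)g(v)$ for all $u,v\in V$; and (c) $\mu(qu)g(qu)=q\,\mu(u)g(u)$ for all $u\in V$ and all $q\in\Q$. Granting this equivalence, the conclusion is routine: for each fixed $u,v\in V$ and $q\in\Q$ the maps $(\mu,g)\mapsto g(0_V)$, $(\mu,g)\mapsto\mu(u+v)g(u+v)-\mu(u)g(u)-\mu(v)g(v)$ and $(\mu,g)\mapsto\mu(qu)g(qu)-q\,\mu(u)g(u)$ are continuous on $\cB\times\V$, being obtained from the coordinate evaluations $\mu\mapsto\mu(w)$ and $g\mapsto g(w)$ (which are continuous for the respective product topologies) by finitely many sums and products; hence $K$ is an intersection of zero sets of continuous functions and is therefore closed.

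The real content is the equivalence. Its forward direction is immediate: if $g=T_\mu(f)$ with $f\in B_{X_\mu^*}$, then $g(0_V)=0$ by definition, $\mu(u)g(u)=f(u)$ for every $u\in V$, and (b), (c) follow from the additivity and $\Q$-homogeneity of $f$ on the $\Q$-vector space $V$. For the converse, assume (a), (b), (c) and define $h\colon V\to\R$ by $h(u)=\mu(u)g(u)$. Conditions (b) and (c) say exactly that $h$ is additive and $\Q$-homogeneous, while $\abs{g(u)}\le 1$ gives $\abs{h(u)}\le\mu(u)=\bar\mu(u)$, so $h$ is $1$-Lipschitz on the dense $\Q$-linear subspace $V$ of $X_\mu$. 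By completeness of $\R$, $h$ extends uniquely to a continuous $\tilde h\colon X_\mu\to\R$ that is still additive, $\Q$-homogeneous and satisfies $\abs{\tilde h(x)}\le\norm{x}$; a continuous additive $\Q$-homogeneous real function on a topological $\R$-vector space is $\R$-linear, so $\tilde h\in B_{X_\mu^*}$. Since $\mu\in\cB$ forces $\mu(u)>0$ for $u\ne0_V$, one gets $T_\mu(\tilde h)(u)=\tilde h(u)/\mu(u)=g(u)$ for $u\ne0_V$, and $T_\mu(\tilde h)(0_V)=0=g(0_V)$ by (a); hence $g=T_\mu(\tilde h)\in K_\mu$.

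The step I expect to be the crux is this converse: one must recognize that a $\bar\mu$-bounded $\Q$-linear functional prescribed only on the countable dense set $V$ already determines a norm-one functional on $X_\mu$, and that the coordinatewise rescaling by $1/\mu(u)$ defining $T_\mu$ is well posed. Both facts rely on working inside $\cB$, where $\bar\mu$ is a genuine norm on $c_{00}$ (so $\mu(u)\ne0$ for $u\ne0_V$ and $V$ is dense in $X_\mu$) --- which is precisely why the whole section is restricted to $\cB$. The remaining points, continuity of the coordinate maps and closedness of an arbitrary intersection of closed sets, are entirely routine.
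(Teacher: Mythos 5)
Your proof is correct and rests on the same key observation as the paper's: membership of $g$ in $K_\mu$ is equivalent to $g(0_V)=0$ together with the $\Q$-linearity and the bound $\abs{\mu(u)g(u)}\le\mu(u)$ of the map $u\mapsto\mu(u)g(u)$ on $V$, after which such a bounded $\Q$-linear functional on the dense $\Q$-subspace $V$ extends to an element of $B_{X_\mu^*}$. The paper verifies these conditions along a convergent sequence whereas you package them up front as a countable intersection of zero sets of continuous coordinate functions, but the two formulations of closedness are interchangeable and the mathematical content is identical.
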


\begin{proof}
    Since $\cB \times \V$ is metrizable (because is a polish space) it is enough to show that if $\{(\mu_n, g_n)\}$ is a sequence in $K$ converging to some $(\mu, g) \in \cB \times \V$, then $(\mu, g) \in K$.

    As $g_n \in K_{\mu_n}$ for every $n \in \N$, by (\ref{caracKmu}) we have that $g_n(0_V)=0$ and that there exists some $f_n \in B_{X_{\mu_n}^*}$ such that for every $u \in V$
    \begin{align}\label{Lema4.1}
        f_n(u)=\mu_n(u) g_n(u).
    \end{align}

    We can now define $f:V \ra \R$ such that for every $u \in V$
    \begin{align}
        f(u)=\mu(u) g(u).
    \end{align}
    First of all, $f$ is $\Q$-lineal because given $u, v \in V$ and $p, q \in \Q$, as the topology in $\cB$ and $\V$ coincide with the pointwise topology, we have by (\ref{Lema4.1}) that
        \begin{align*}
            f(pu+qv) & =\mu(qu+pv) g(qu+pv) = \lim_{n \to \infty} \mu_n(qu+pv) g_n(qu+pv) =\\
             & = \lim_{n \to \infty} f_n(pu+qv) = \lim_{n \to \infty} pf_n(u) + qf_n(v) = \\
             & = \lim_{n \to \infty} p \mu_n(u) g_n(u) + q \mu_n(v) g_n(v) = p \mu(u)g(u) + q  \mu(v)g(v) =\\
             & = pf(u)+qf(v).
        \end{align*}
    Furthermore, as $g \in [-1,1]^V$ we obtain that for every $u \in V$
    $$\abs{f(u)} = \mu(u) \abs{g(u)} \leq \mu(u).$$
    The last two properties allow us to extend $f$ to some function $F \in B_{X_\mu^*}$ and observe that for every $u \in V$
    \begin{align}\label{Lema4.2}
        F(u)=f(u)=\mu(u) g(u).
    \end{align}
    Since
    \begin{align}\label{Lema4.3}
        g(0_v)= \lim_{n \to \infty} g_n(0_V)=0,
    \end{align}
    by (\ref{caracKmu}), (\ref{Lema4.2}) and (\ref{Lema4.3}) we conclude that $g \in K_\mu$, so $(\mu, g) \in K$ as we wanted to show.
\end{proof}

With the above result in mind let us prove that all the complexity classes of diameter two and Daugavet properties are $G_\delta$-complete in $\cB$. Let us start with the class $\DdosP$.

\begin{theorem}\label{D2Pteo}
    The  class $\DdosP$ is $G_\delta$-complete in $\cB$.
\end{theorem}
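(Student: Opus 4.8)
The plan is to establish two facts: that $\DdosP$ is a $G_\delta$ subset of $\cB$, and that it is $G_\delta$-hard. The second is essentially free once the first is in hand, so all the effort goes into the $G_\delta$ upper bound, and this is exactly where Lemma~\ref{Lema4} enters.

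First I would record a codified reformulation of the D2P. Fix $\mu\in\cB$: recall that $V$ sits inside $X_\mu$ as a countable dense set with $\norm{v}_{X_\mu}=\mu(v)$, and that $T_\mu$ identifies $(B_{X_\mu^*},w^*)$ with $K_\mu\subseteq\V$ through the relation $f(u)=\mu(u)g(u)$ for $u\in V$. With this dictionary, $X_\mu$ has the D2P if and only if for every $k\in\N$, all positive rationals $\varepsilon,\delta$, all $\alpha=(\alpha_1,\dots,\alpha_k)\in\Q^k$, and every choice of $g_1,\dots,g_k\in K_\mu$, the following holds: if there is $v_0\in V$ with $\mu(v_0)<1$ and $\abs{\mu(v_0)g_i(v_0)-\alpha_i}<\varepsilon$ for all $i$, then there are $v,w\in V$ with $\mu(v)<1$, $\mu(w)<1$, with $\abs{\mu(v)g_i(v)-\alpha_i}<\varepsilon$ and $\abs{\mu(w)g_i(w)-\alpha_i}<\varepsilon$ for all $i$, and with $\mu(v-w)>2-\delta$. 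Verifying this equivalence is routine, with two points deserving care: every non-empty relatively weakly open subset of $B_{X_\mu}$ contains a non-empty one of the form $\{x\in B_{X_\mu}:\abs{f_i(x)-\alpha_i}<\varepsilon\ \forall i\}$ with $f_i\leftrightarrow g_i\in K_\mu$ and with $\varepsilon,\alpha_i$ rational; and a pair of points realizing (approximately) its diameter can be taken in $V$ with norm $<1$ and still inside the set, using that $V$ is dense and that $x\mapsto(1-\eta)x$ remains in any relatively weakly open set for $\eta$ small. This last point is what lets all the inequalities above be strict without loss.

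Next I would read off the complexity. Fix a parameter tuple $(k,\varepsilon,\delta,\alpha)$; there are only countably many. By Lemma~\ref{Lema4} and continuity of the coordinate projections, the set of $(\mu,g_1,\dots,g_k)\in\cB\times\Vu{k}$ with $(\mu,g_i)\in K$ for all $i$ is closed. The ``premise'' set, i.e. those $(\mu,g_1,\dots,g_k)$ for which some $v_0\in V$ witnesses the hypothesis, is a countable union over $v_0\in V$ of open sets, hence open; and for fixed $v,w\in V$ the ``conclusion'' conditions cut out an open set, so the full conclusion (existence of such $v,w$) is open. Hence the set of $\mu$ for which the displayed implication fails is the projection onto $\cB$ of a set of the form $(\text{closed})\cap(\text{open})\cap(\text{closed})$ inside $\cB\times\Vu{k}$, which is $F_\sigma$ there; since $\Vu{k}$ is compact, projection onto $\cB$ is a closed map, so this failure set is $F_\sigma$ in $\cB$ and its complement is $G_\delta$. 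Intersecting over all parameter tuples shows $\DdosP$ is $G_\delta$ in $\cB$.

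Finally, for $G_\delta$-completeness I would appeal to Proposition~\ref{NoFsig1}: it gives that $\DdosP$ is not $F_\sigma$ in $\cB$, and a Borel subset of a Polish space which is $G_\delta$ but not $F_\sigma$ is $G_\delta$-complete (a standard consequence of Wadge's lemma for Borel sets). The main obstacle throughout is the $G_\delta$ upper bound, and within it the point is to arrange the quantifier over dual functionals so that the compactness of $\V$ and the closedness of $K$ from Lemma~\ref{Lema4} can be combined as above; once the relevant inequalities are made strict and the set of admissible functionals kept closed, the rest is bookkeeping.
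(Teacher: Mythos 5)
Your proposal is correct and follows essentially the same route as the paper: encode basic relatively weakly open sets via functionals $g_i\in K_\mu$, use Lemma~\ref{Lema4} to keep the set of admissible tuples $(\mu,\vec g)$ closed, exploit compactness of $\Vu{k}$ to project (equivalently, co-project) and obtain a $G_\delta$ upper bound, and finish with Proposition~\ref{NoFsig1} together with the Wadge-type fact from Kechris 22.11. The only differences are cosmetic reorganizations (rational centers $\alpha\in\Q^k$ in place of centers $u\in V$, and placing the countable parameters outside the projected set rather than inside it).
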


\begin{proof}
    For every $n \in \N$ we define the set
    \begin{align}
        P_n = & \Bigl\{ \left( \mu, \Vec{g} \right) \in \cB \times \Vu{n}: \forall \eps > 0\, \forall \delta > 0\, \forall u \in V \bigl[ \mu(u) > 1 \, \lor \\
         & \exists i \in \{1, \cdots, n\}\, g_i \not \in K_\mu \lor \exists v, w \in V \bigl( \mu(v-w) > 2-\eps \land \mu(v)<1\, \land  \nonumber\\
         & \mu(w) < 1 \land \forall i \in \{1,\cdots, n\} \bigl[ \abs{\mu(u) g_i(u) - \mu(v) g_i(v)}< \delta\, \land \nonumber\\
         & \abs{\mu(u) g_i(u) - \mu(w) g_i(w)}< \delta \bigr] \bigr) \bigr]\Bigr\}. \nonumber
    \end{align}
    Let us begin showing that
    \begin{align}\label{Lema7.40}
        \DdosP = \bigcap_{n=1}^\infty \pi_{\cB}^c (P_n)
    \end{align}
    where $\pi_{\cB}^c$ is the corresponding co-projection over $\cB$.

    If $\mu \in \DdosP$, we must show that given $n \in \N$ and $\Vec{g} \in \Vu{n}$ we have that $\left( \mu, \Vec{g}\right) \in P_n$. Given then $\eps, \delta >0$ and $u \in V$, if $\mu(u) > 1$ or there exists some $i \in\{1, \cdots, n\}$ such that $g_i \not \in K_\mu$, then it is clear that $\left( \mu, \Vec{g}\right) \in P_n$, so we will suppose that this is not the case.

    Consequently,  we will assume that there exist $f_1, \cdots, f_n \in B_{X_\mu^*}$ such that $g_i=T_\mu(f_i)$ for every $i=1, \cdots, n$. Then, we can define the  weak open set of $X_\mu$ 
    \begin{align}
        U=U(u; f_1, \cdots, f_n; \delta):=\left\{x \in X_\mu: \abs{f_i(u-x)}< \delta, \, i=1, \cdots, n\right\}.
    \end{align}
    As we are suppposing that $\mu(u) \leq 1$ it is obvious that $U \cap B_{X_\mu} \not = \emptyset$, and since $X_\mu$ has the $D2P$ there are some $x, y \in U \cap B_{X_\mu}$ such that
    \begin{align}\label{Lema7.43}
        \hat{\mu}(x-y) > 2-\eps.
    \end{align}
    Using that $V \cap \interior{B_{X_\mu}}$ is dense in $B_{X_\mu}$ we can obtain some $v, w \in V$ such that $\mu(v), \mu(w) < 1$ and
    \begin{align}
        \hat{\mu}(x-v) & < \min\left\{ \dfrac{1}{2} \left( \hat{\mu}(x-y) - (2-\eps) \right), \min_{1 \leq i \leq n} \{ \delta - \abs{f_i(x) - f_i(u)}\} \right\},\label{Lema7.44}\\
        \hat{\mu}(y-w) & < \min\left\{ \dfrac{1}{2} \left( \hat{\mu}(x-y) - (2-\eps) \right), \min_{1 \leq i \leq n} \{ \delta - \abs{f_i(y) - f_i(u)}\} \right\},\label{Lema7.45}
    \end{align}
    where $\hat{\mu}(x-y) - (2-\eps) >0$ because of (\ref{Lema7.43}) and $\delta - \abs{f_i(x) - f_i(u)}>0$ for every $i=1, \cdots, n$ because $x \in U$, and the same happens for $y$.

    From this last inequalities we obtain that
    \begin{align}\label{Lema7.46}
        \mu(v-w) \geq \hat{\mu}(x-y) - \hat{\mu}(x-v) - \hat{\mu}(w-y) > 2-\eps
    \end{align}
    and for every $i= 1, \cdots, n$
    \begin{align}\label{Lema7.47}
        & \abs{\mu(u) g_i(u) - \mu(v) g_i(v)} = \abs{f_i(u) - f_i(v)} \leq \abs{f_i(u) -f_i(x)} + \abs{f_i(x) - f_i(v)} \\
        & \leq \abs{f_i(u) -f_i(x)} + \hat{\mu}(x-v) < \delta. \nonumber
    \end{align}
    It is clear that the same holds true if we replace $v$ with $w$ in (\ref{Lema7.47}).
     Puting all together we conclude that  $\left(\mu, \Vec{g}\right) \in P_n$, as desired.

    For the reverse inclusion, let us assume that $\mu \in \pi_{\cB}^c(P_n)$ for every $n \in \N$ and we will show that given a non-empty weakly open set $U$ of $X_\mu$ such that $U \cap B_{X_\mu} \not = \emptyset$ we get 
    \begin{align}
        \diam \left(U \cap B_{X_\mu} \right) = 2.
    \end{align}
    Without lost of generality we can suppose that
    $$U=U(x;f_1, \cdots, f_m; \delta)$$
    for some $x \in B_{X_\mu}$, $\delta>0$ and $f_1, \cdots, f_m \in B_{X_{\mu}^*}$.

    It is enough then to show that given $\eps >0$ there are some $z, y \in U \cap B_{X_\mu}$ such that
    \begin{align}\label{Lema7.51}
        \hat{\mu}(z-y) > 2- \eps.
    \end{align}
    As $V \cap \interior{B_{X_\mu}}$ is dense in $B_{X\mu}$, there exists some $u \in V$ such that $\mu(u)< 1$ and
    \begin{align}\label{Lema7.52}
        \hat{\mu}(x-u) < \dfrac{\delta}{2}
    \end{align}
    Define, for every $i=1, \cdots, m$, the map $g_i=T_\mu(f_i)$ and
    $$\Vec{g}=(g_1, \cdots, g_m) \in \Vu{m},$$
    Since $\mu\in \pi_{\cB}^c(P_m)$, we obtain that $\left(\mu, \Vec{g}\right) \in P_m$. Now, since $\mu(u)<1$ and $g_1, \cdots, g_m \in K_\mu$, there must exist some $v, w \in V$ such that $\mu(v), \mu(w) < 1$, 
    \begin{align}\label{Lema7.53}
        \mu(v-w) > 2-\eps,
    \end{align}
    and, for every $i=1, \cdots, m$, the following holds
    \begin{align}
        & \abs{\mu(u) g_i(u) - \mu(v) g_i(v)}< \dfrac{\delta}{2}, \label{Lema7.54.1}\\
        & \abs{\mu(u) g_i(u) - \mu(v) g_i(w)}< \dfrac{\delta}{2}.\label{Lema7.54.2}
    \end{align}
    It is enough to prove that $v, w \in U$. Indeed, in such case we get by (\ref{Lema7.53}) that (\ref{Lema7.51}) is true as desired. Now, to prove that $u,v\in U$ just observe that, as for every $i=1, \cdots, m$ we have that $g_i=T_\mu(f_i)$,  (\ref{Lema7.52}) and (\ref{Lema7.54.1}) imply
    \begin{align*}
        \abs{f_i(x) - f_i(v)} \leq & \abs{f_i(x)-f_i(u)} + \abs{f_i(u) - f_i(v)} \leq\\
         & \hat{\mu}(x-u) + \abs{\mu(u) g_i(u) - \mu(v) g_i(v)} < \delta.
    \end{align*}
    This proves that $v \in U$ and using (\ref{Lema7.54.2}) in the same way we can also prove that $w \in U$. This proves (\ref{Lema7.40}).

    Now, from (\ref{Lema7.40}) it is enough to show that $P_n$ is a $G_\delta$ in $\cB \times \Vu{n}$ for every $n \in \N$ to obtain that $\DdosP$ is $G_\delta$ in $\cB$. Indeed, if $P_n$ is a $G_\delta$ set, its complement is a $F_\sigma$ set and, as $\Vu{n}$ is compact, we have that $\pi_{\cB}(P_n^c)$ is a $F_\sigma$ set in $\cB$, so
    $$\pi_{\cB}^c(P_n) = \left(\pi_{\cB}(P_n^c)\right)^c$$
    is in fact a $G_\delta$ set in $\cB$ and so is $\DdosP$ by (\ref{Lema7.40}).

    To prove that $P_n$ is $G_\delta$, observe first that we can express
    \begin{align}\label{Lema7.27}
        P_n= & \bigcap_{m, k \in \N} \bigcap_{u \in V} \Biggl[ C_{u,1}^1 \cup \bigcup_{i=1}^n C_i^2 \cup \bigcup_{v, w \in V} \Biggl( C_{v-w, 2-1/m}^1 \cap C_{v,1}^3 \cap C_{w,1}^3\, \cap\\
         & \,\,\, \bigcap_{i=1}^n \left[ C_{i,u,v,1/k}^4 \cap C_{i,u,w,1/k}^4 \right] \Biggr) \Biggr], \nonumber
    \end{align}
    where for any $u, v \in V$, $t \in \R$ and $i \in \{1, \cdots, n\}$
    \begin{itemize}
        \item $C_{u,t}^1 = \left\{\left( \mu, \Vec{g} \right) \in \cB \times \Vu{n}: \mu(u) > t \right\}$,
        \item $C_{i}^2 = \left\{\left( \mu, \Vec{g} \right) \in \cB \times \Vu{n}: g_i \not \in K_\mu \right\}$,
        \item $C_{u,t}^3 = \left\{\left( \mu, \Vec{g} \right) \in \cB \times \Vu{n}: \mu(u) < t \right\}$,
        \item $C_{i,u,v,t}^4 = \left\{\left( \mu, \Vec{g} \right) \in \cB \times \Vu{n}:  \abs{\mu(u) g_i(u) - \mu(v) g_i(v)} < t\right\}$.
    \end{itemize}
    It is easy to see that the first and the two last ones are open sets by showing that its complement is closed using that the topologies are pointwise convergent. To show that the second one is also open, which will prove using (\ref{Lema7.27}) that $P_n$ is $G_\delta$ as wanted, we will show that
    $$D_i=\left( \cB \times \Vu{n} \right) \setminus C_i^2 = \left\{\left( \mu, \Vec{g} \right) \in \cB \times \Vu{n}: g_i \in K_\mu \right\}$$
    is closed.

    Fix $i\in\{1,\ldots, n\}$ and observe that the map $\pi: \cB \times \Vu{n} \ra \cB \times [-1,1]^V$ given by
    $$\pi \left( \mu, \Vec{g}\right) = (\mu, g_i)$$
    is continuous and that
    $$D_i = \pi^{-1}(K)$$
    where $K$ is the set of Lemma \ref{Lema4}. Since $K$ is closed by Lemma \ref{Lema4}d, the continuity of $\pi$ yields that $D_i$ is also closed as wanted.

    Lastly, as $\DdosP$ is not $F_\sigma$ by Corollary \ref{NoFsig1}, we obtain by \cite[22.11]{kech} that it is $G_\delta$-complete as wanted.
\end{proof}

Using a similar argument we can prove the following theorem:

\begin{theorem}\label{CompClasesB}
    The classes $\LDP$, $\SDP$, $\DLDP$, $\DDdosP$ and $\DP$ are all $G_\delta$-complete in $\cB$.
\end{theorem}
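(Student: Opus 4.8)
The plan is to rerun, for each of the five properties, the argument of Theorem~\ref{D2Pteo} essentially verbatim, changing only the first-order condition packed into the auxiliary sets so that it reflects the geometric definition of the property at hand. The three structural ingredients of that proof are reused unchanged: the encoding of dual functionals through the map $T_\mu$ of \eqref{Tmu}, the closedness of the set $K$ from Lemma~\ref{Lema4}, and the fact that $F_\sigma$ subsets of $\cB\times\Vu{n}$ are carried to $F_\sigma$ subsets of $\cB$ by the projection $\pi_{\cB}$ (because $\Vu{n}$ is compact). At the end, $G_\delta$-completeness is obtained from Proposition~\ref{NoFsig1} together with \cite[22.11]{kech}, exactly as for $\DdosP$.

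Thus, for each property one builds sets $P_n\subseteq\cB\times\Vu{n}$ (for $\LDP$, $\DLDP$ and $\DP$ already $n=1$ is enough, whereas $\SDP$ and $\DDdosP$ need all $n$) whose points $(\mu,\vec g)$ encode $\mu$ together with candidate dual functionals --- if $g_i\in K_\mu$ then $g_i=T_\mu(f_i)$ for the unique $f_i\in B_{X_\mu^*}$ with $f_i(v)=\mu(v)g_i(v)$ for all $v\in V$ --- and one shows that the property-class equals $\bigcap_n\pi_{\cB}^c(P_n)$. The modifications to the condition defining $P_n$ are: (i)~a slice $S(B_{X_\mu},f_i,\alpha)$ (with $\alpha>0$ rational) is described by the inequality $\mu(v)g_i(v)>1-\alpha$, and ``$u$ lies in the $i$-th slice'' reads $\mu(u)g_i(u)>1-\alpha$, the bad alternatives being $\{g_i\notin K_\mu\}$, $\{\mu(u)g_i(u)\le 1-\alpha\}$ and ``the $i$-th slice is empty'', namely $\bigcap_{v\in V}\{\mu(v)g_i(v)\le 1-\alpha\}$; (ii)~for $\DLDP$, $\DDdosP$ and $\DP$ the requirement ``$x\in S_{X_\mu}$'' of the definitions is encoded as $\mu(u)>1-\eps/2$ on the relevant point $u\in V$ (bad alternative $\{\mu(u)\le 1-\eps/2\}$), which yields the same class because a point $u$ with $\mu(u)>1-\eps/2$ can be normalised to $u/\mu(u)\in S_{X_\mu}$, which stays in the same slice (resp. in the same relatively weakly open set after a harmless enlargement of its parameters) and lies within $\eps/2$ of $u$; (iii)~for $\SDP$ one also quantifies over $n$ rational slice-parameters $\alpha_1,\dots,\alpha_n>0$ and over rational convex weights $\lambda_1,\dots,\lambda_n$, asking in the main alternative for $v_j,w_j\in V$ with $\mu(v_j),\mu(w_j)<1$ in the $j$-th slice and with $\mu\bigl(\sum_j\lambda_j(v_j-w_j)\bigr)>2-\eps$ --- here $\sum_j\lambda_j(v_j-w_j)\in V$, so this is again an inequality on one coordinate of $\mu$. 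For $\DLDP$ the point $u$ must lie in the slice; for $\DDdosP$ it is taken as the centre of the relatively weakly open set $U(u;f_1,\dots,f_n;\delta)$, as in Theorem~\ref{D2Pteo}; for $\DP$ it is only required to satisfy $\mu(u)\le 1$.

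With these definitions the identity ``property-class $=\bigcap_n\pi_{\cB}^c(P_n)$'' is checked exactly as in Theorem~\ref{D2Pteo}: the forward inclusion uses the density of $V\cap\interior{B_{X_\mu}}$ in $B_{X_\mu}$ and the relative openness of slices and of relatively weakly open sets to replace the far points supplied by the property by nearby points of $V$ strictly inside the ball and still in the slice (resp. weak-open set), after --- for the diametral and Daugavet properties --- replacing the sphere point of the definition by the nearby $u$ via the normalisation above; the reverse inclusion simply reads the far points off $P_n$, once a genuine slice of possibly irrational parameter $\alpha$ has been shrunk to one of rational parameter $\tilde\alpha\le\alpha$ and (for $\SDP$) the real convex weights have been approximated by rational ones, which is harmless since the convex combination of finitely many slices of $B_{X_\mu}$ varies continuously, in Hausdorff distance and hence in diameter, with the weights. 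It remains to see that each $P_n$ is $G_\delta$: writing it as a countable intersection, over the rational quantifiers and the points of $V$, of finite unions of the atomic sets, one notes that $\{\mu(u)>t\}$, $\{\mu(u)<t\}$, $\{|\mu(u)g_i(u)-\mu(v)g_i(v)|<t\}$ and $\{\mu(\sum_j\lambda_j(v_j-w_j))>t\}$ are open (the topologies being those of pointwise convergence), that $\{g_i\notin K_\mu\}$ is open by Lemma~\ref{Lema4}, and that the new degenerate clauses ``$\mu(u)g_i(u)\le 1-\alpha$'', ``$\mu(u)\le 1-\eps/2$'' and ``the $i$-th slice is empty'' are closed; since closed sets are $G_\delta$ in the metrizable space $\cB\times\Vu{n}$ and a finite union of $G_\delta$ sets is $G_\delta$, so is $P_n$. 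Then $\pi_{\cB}(P_n^c)$ is $F_\sigma$ by compactness of $\Vu{n}$, so $\pi_{\cB}^c(P_n)$ and the whole class are $G_\delta$; and since the class is not $F_\sigma$ by Proposition~\ref{NoFsig1}, it is $G_\delta$-complete by \cite[22.11]{kech}.

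The steps I expect to demand the most care are the finitary encoding of $\SDP$ --- one must verify that quantifying over finitely many rational slice-parameters and rational convex coefficients genuinely captures ``every convex combination of slices has diameter two'' (the passage to rational data being precisely the Hausdorff-continuity remark above), and that the combination $\sum_j\lambda_j(v_j-w_j)$ stays in $V$ so that the far-apart condition is an honest coordinate inequality --- together with the treatment of ``$x\in S_{X_\mu}$'' in $\DLDP$, $\DDdosP$ and $\DP$: this must be phrased as $\mu(u)>1-\eps/2$ with the parameter tied to the running $\eps$, carefully avoiding the tempting formulation with an extra existential ``$\exists\eta>0$'', which would turn the inner set into an $F_{\sigma\delta}$ rather than a $G_\delta$. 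Everything else is a routine transcription of the proof of Theorem~\ref{D2Pteo}.
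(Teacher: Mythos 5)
Your overall strategy is exactly the paper's: encode candidate functionals through $T_\mu$ and the closed set $K$ of Lemma \ref{Lema4}, write each class as $\bigcap_n \pi_{\cB}^c(P_n)$ for suitable $G_\delta$ sets $P_n \subset \cB\times\Vu{n}$, use compactness of $\Vu{n}$ to keep the co-projections $G_\delta$, and finish with Proposition \ref{NoFsig1} and \cite[22.11]{kech}. Your $\LDP$ and $\SDP$ encodings agree with the paper's up to harmless reparametrisations (the paper quantifies only over equal-weight means of slices, which suffices by the remark preceding Lemma \ref{SD2Plema}, while you quantify over rational convex weights; your Lipschitz-in-the-weights observation justifies that too).

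The genuine problem is your point (ii), the encoding of ``$x\in S_{X_\mu}$'' for $\DLDP$ and $\DDdosP$ by the clause $\mu(u)>1-\eps/2$ plus normalisation. First, $u/\mu(u)$ does \emph{not} in general stay in the slice: for a slice $\{z\in B_{X_\mu}: f(z)>\beta\}$ with $\beta<0$ and $\beta<f(u)\le\beta\mu(u)$ one has $f(u/\mu(u))\le\beta$, and this window is nonempty whenever $\mu(u)<1$; for the diametral properties you cannot discard negative-threshold slices by shrinking them, because the test point is tied to the slice and shrinking may expel it, so the forward inclusion (property $\Rightarrow(\mu,\vec g)\in P_n$) is not established in that regime. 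Second, for $\DDdosP$ the ``harmless enlargement'' of the weak-neighbourhood parameter is not harmless: the tolerance $1/k$ is quantified independently of $\eps=1/m$, and when $1/k<1-\mu(u)<\eps/2$ the far points supplied by the DD2P anchored at $u/\mu(u)$ satisfy only $|f_i(u-v)|\lesssim 1-\mu(u)$, which can exceed $1/k$; producing them in an enlarged neighbourhood of $u$ does not verify the clause for the original $1/k$. (For $\DP$ your version is fine, since there the test point need not belong to the slice.) The paper avoids all of this by never normalising: it imposes no lower bound on $\mu(u)$ and instead rescales the distance requirement homogeneously to $\mu(u-v)>2\mu(u)-1/m$ (see the displayed sets $P$ in its proof and the passage from (\ref{werner1}) to (\ref{werner2})), so the error $1-\mu(u)$ is absorbed by the inequality $1+\mu(u)\ge 2\mu(u)$ rather than by the running $\eps$. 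You should adopt that formulation, or prove your clauses equivalent to it; as written, the identity ``class $=\bigcap_n\pi_{\cB}^c(P_n)$'' is not justified for $\DLDP$ and $\DDdosP$.
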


\begin{proof}
    The proof is analogous to that of Theorem \ref{D2Pteo}, that is, we express each class as a countable intersection of co-projections of $G_\delta$ sets in $\cB \times \Vu{n}$.

    For $\LDP, \DLDP$ and $\DP$ we have that these classes are equal to $\pi_{\cB}^c(P)$ with $P$ being, respectively, the set: 
    \begin{align}
        P = & \Bigl\{ \left( \mu, g \right) \in \cB \times \V: \forall m \in \N \, \forall \alpha \in \Q \bigl[ g \not \in K_\mu \lor \forall u \in V \bigl( \mu(u) > 1\, \lor\\ 
         & \mu(u)g(u) \leq \alpha \bigr) \lor \exists v, w \in V \bigl( \mu(v-w) > 2-1/m \land \mu(v)<1\, \land  \nonumber\\
         & \mu(w) < 1 \land \mu(v)g(v) > \alpha \land \mu(w)g(w) > \alpha \bigr) \bigr]\Bigr\}, \nonumber
    \end{align}
    \begin{align}
        P = & \Bigl\{ \left( \mu, g \right) \in \cB \times \V: \forall m \in \N \, \forall \alpha \in \Q\, \forall u \in V \bigl[ g \not \in K_\mu \lor \mu(u) > 1\, \lor\\ 
         & \mu(u)g(u) \leq \alpha \lor \exists v \in V \bigl( \mu(u-v) > 2\mu(u)-1/m \land \mu(v)<1\, \land \nonumber\\
         & \mu(v)g(v) > \alpha \bigr) \bigr]\Bigr\}. \nonumber
    \end{align}
    \begin{align}
        P = & \Bigl\{ \left( \mu, g \right) \in \cB \times \V: \forall m \in \N \, \forall \alpha \in \Q\, \forall u \in V \bigl[ g \not \in K_\mu \lor \mu(u) > 1\, \lor\\ 
         & \forall w \in V \bigl( \mu(w) > 1 \lor\mu(w)g(w) \leq \alpha \bigr) \lor \exists v \in V \nonumber\\
         &\bigl( \mu(u-v) > 2\mu(u)-1/m \land \mu(v)<1 \land \mu(v)g(v) > \alpha \bigr) \bigr]\Bigr\}. \nonumber
    \end{align}

    For $\SDP$ and $\DDdosP$ we have that these classes are equal to
    $$\bigcap_{n \in \N} \pi_{\cB}^c(P_n)$$
    with $P_n$ being, respectively, the set:
    \begin{align}
        P_n = & \Biggl\{ \left( \mu, \Vec{g} \right) \in \cB \times \Vu{n}: \forall m \in \N \, \forall \alpha \in \Q^n \Biggl[ \exists i \in \{1, \cdots, n\}\\
        & \bigl( g_i \not \in K_\mu \lor \forall u \in V \bigl[ \mu(u) > 1 \lor \mu(u)g_i(u) \leq \alpha_i \bigr] \bigr) \lor \exists \Vec{v}, \Vec{w} \in V^n \nonumber\\ 
         & \Biggl( \mu\left(\sum_{i=1}^n \dfrac{1}{n} (v_i - w_i)\right) > 2-1/m \land \forall i \in \{1, \cdots, n\} \bigl[\mu(v_i) < 1 \,\land \nonumber\\
         & \mu(w_i) < 1 \land \mu(v_i)g_i(v_i) > \alpha \land \mu(w_i)g_i(w_i) > \alpha \bigr] \Biggr) \Biggr]\Biggr\}, \nonumber
    \end{align}
    \begin{align}
        P_n = & \Bigl\{ \left( \mu, \Vec{g} \right) \in \cB \times \Vu{n}: \forall m \in \N\, \forall k \in \N\, \forall u \in V \bigl[ \mu(u) > 1 \, \lor \\
         & \exists i \in \{1, \cdots, n\}\, g_i \not \in K_\mu \lor \exists v \in V \bigl( \mu(u-v) > 2\mu(u)-1/m \, \land  \nonumber\\
         & \mu(v)<1 \land \forall i \in \{1,\cdots, n\}  \abs{\mu(u) g_i(u) - \mu(v) g_i(v)}< 1/k \bigr) \bigr]\Bigr\}. \nonumber
    \end{align}

    Lastly, by Corollary \ref{NoFsig1} and \cite[22.11]{kech} we obtain that they are all $G_\delta$-complete sets in $\cB$.
\end{proof}

\section{Complexity of diameter \texorpdfstring{$2$}{2} and Daugavet properties}\label{section:complepinfinito}

Observe that by \cite[Proposition 3.6]{cddk1} there exists a $\Sigma_2^0$-measurable map $\Phi: \cPi \ra \cB$ such that $X_\mu \equiv X_{\Phi(\mu)}$ for every $\mu \in \cPi$, so we can immediately establish that any of the diameter $2$ properties or the Daugavet property is $F_{\sigma\delta}$ in $\cPi$. One could then try to mimic the arguments given in the previous section to get their optimal complexity class in $\cPi$ and, although it is in fact possible to define for this space the analogues to the maps $T_\mu$ and the sets $K_\mu$ and $K$, we were unable to prove that this analogue set $K$ is a $F_\sigma$ set (although it is a $G_\delta$ not closed set as we will show in Remark \ref{Knocerrado}). This is essential to ensure that the sets $P_n$ or $P$ used in the proofs are $G_\delta$, which is key to ensuring that the co-projections are Borel and not just coanalytic.

Because of this reason, to study the optimal complexity of this classes in $\cPi$ we will need to use a different approach, mainly based on expressing these properties in a way that only the norm of the space is involved, without any reference to the dual space. We have accomplished to do it for every diameter $2$ property except for the D2P and the DD2P, where the use of the weak topology (and henceforth the dual space), seems to be unavoidable, so we can only give the previous bound for these two properties in $\cPi$.

We will start by studying the complexity of $\LDP$, but in order to also establish the complexity of the class of spaces with a dentable ball, by the obvious relation between the dentability of the ball of a Banach space and the diameter of the slices of its ball, we will work with a slightly more general property: 

\begin{definition}
    Let $\delta$ be a real number with $0 < \delta \leq 2$. We say that a Banach space $X$ has the \textit{local diameter $\delta$ property} ($LD\delta P$) if every slice of $B_X$ has diameter greater than or equal to $\delta$.
\end{definition}

Following the known characterization for the LD2P given in \cite[Lemma 1]{ivakhno}), it is immediate that a Banach space $X$ has the LD$\delta$P iff for every $\eps >0$ we have that
\begin{align}\label{Ivank1}
    B_X \subset \cconv\left\{ \dfrac{x+y}{2}: x,y \in B_X \land \norm{x-y} > \delta - \eps \right\}.
\end{align}

By using this result we can prove the following lemma, which gives a simple enough description of the $LD\delta P$ to compute its complexity class:

\begin{lemma}\label{LD2Plema}
    Let $X$ be a Banach space, $0 < \delta \leq 2$ and $D\subset X$ a dense subset. The following are equivalent:
    \begin{enumerate}
        \item $X$ has the $LD\delta P$.
        \item For every $\eps >0$ and for every $z \in D$
        $$\norm{z} \geq 1 \lor z \in \cconv_{\Q}\left\{ \dfrac{x+y}{2}: x,y \in \interior{B_X} \cap D \land \norm{x-y} > \delta - \eps \right\},$$
        where $\cconv_{\Q}(A)$ is the closure of all rational convex combinations of elements of $A$.
    \end{enumerate}
\end{lemma}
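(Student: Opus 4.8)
The plan is to prove the equivalence by going through the intermediate characterization \eqref{Ivank1}: $X$ has the $LD\delta P$ if and only if for every $\eps>0$,
\[
B_X \subset \cconv\left\{ \tfrac{x+y}{2}: x,y\in B_X \land \norm{x-y}>\delta-\eps\right\}.
\]
So it suffices to show that (2) is equivalent to this inclusion holding for all $\eps>0$. The direction (2)$\Rightarrow$\eqref{Ivank1}$\Rightarrow$(1) should be the easier one: given $\eps>0$ and an arbitrary $z\in B_X$, first handle the trivial case $\norm{z}=1$ separately (such $z$ lies in the closed set on the right of \eqref{Ivank1} by a standard midpoint argument, e.g.\ approximating from inside as in the proof of \eqref{Ivank1}), and for $\norm{z}<1$ pick $z'\in D$ close to $z$ with $\norm{z'}<1$ still, apply (2) to $z'$ and $\eps/2$, then observe that $\cconv_\Q A\subset\cconv A$ and that shrinking the set $\{x,y\in\interior{B_X}\cap D:\norm{x-y}>\delta-\eps/2\}$ only requires the weaker midpoint condition $\norm{x-y}>\delta-\eps$; a triangle-inequality estimate closes the gap between $z$ and $z'$.

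The reverse direction (1)$\Rightarrow$\eqref{Ivank1}$\Rightarrow$(2) is where the density of $D$ must be exploited carefully. Assume $X$ has the $LD\delta P$, fix $\eps>0$ and $z\in D$ with $\norm{z}<1$. By \eqref{Ivank1} applied with $\eps/3$, we can write $z$ as a limit of convex combinations $\sum_k\lambda_k\frac{x_k+y_k}{2}$ with $x_k,y_k\in B_X$ and $\norm{x_k-y_k}>\delta-\eps/3$. The work is to replace each $x_k,y_k$ by nearby elements of $\interior{B_X}\cap D$: scale slightly toward the origin (legitimate since $\norm{z}<1$ leaves room, after controlling the contraction of the displayed midpoint toward $0$) to push the points strictly inside the ball, then perturb by elements of $D$ — available because $D$ is dense — keeping $\norm{x_k-y_k}>\delta-\eps$ and keeping the midpoints inside the ball; finally replace the real coefficients $\lambda_k$ by rationals. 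Each perturbation changes the convex combination by an amount controlled by the size of the perturbations, so the limit is still $z$, placing $z\in\cconv_\Q\{\frac{x+y}{2}:x,y\in\interior{B_X}\cap D,\ \norm{x-y}>\delta-\eps\}$.

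The main obstacle I expect is the bookkeeping in this last replacement step: one must simultaneously (a) move $x_k,y_k$ strictly inside $B_X$, (b) move them into $D$, (c) preserve the strict inequality $\norm{x_k-y_k}>\delta-\eps$ despite both perturbations, and (d) ensure the midpoints $\frac{x_k+y_k}{2}$ stay in $\interior{B_X}$ — all uniformly enough that the perturbed convex combination still converges to $z$. The key trick is the order of operations: first contract by a factor $1-\eta$ with $\eta$ chosen small relative to both $\eps$ and the slack $1-\norm{z}$, which creates a fixed margin; only then perturb within $D$ by less than, say, $\min\{\eta,\eps/6\}$. Because $\norm{x_k-y_k}$ can only shrink by $O(\eta)+O(\eps/6)$ under these moves, choosing $\eta<\eps/6$ keeps it above $\delta-\eps$, and the margin from the contraction keeps everything inside the open ball. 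The coefficient rationalization is routine once the points are fixed. I would present the forward direction in full detail and treat the contraction-then-perturbation lemma as the technical core.
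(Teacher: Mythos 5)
Your proposal is correct and follows essentially the same route as the paper: both directions pass through the characterization (\ref{Ivank1}), the restriction of the quantifier to the dense set $D$ is handled by closedness of the right-hand side, and the replacement of $B_X$-generators by $\interior{B_X}\cap D$-generators (together with the rationalization of coefficients) rests on the stability of the strict inequality $\norm{x-y}>\delta-\eps$ under small perturbations. The paper merely compresses your contraction-then-perturbation bookkeeping into the stated identity (\ref{Ivank5}) and cites \cite[Lemma 1.4.4]{pirk} for $\cconv(A)=\cconv_{\Q}(A)$.
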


\begin{proof}
    It is immediate to see that (\ref{Ivank1}) is equivalent to
    \begin{align*}
        \forall \eps>0\, \interior{B_X} \subset \cconv\left\{ \dfrac{x+y}{2}: x,y \in B_X \land \norm{x-y} > \delta - \eps \right\},
    \end{align*}
    which is clearly equivalent to
    \begin{align}\label{Ivank2}
        \forall \eps>0\, \forall z \in X \left(\norm{z} \geq 1 \lor z \in \cconv\left\{ \dfrac{x+y}{2}: x,y \in B_X \land \norm{x-y} > \delta - \eps \right\}\right).
    \end{align}
    Given $A \subset X$, by \cite[Lemma 1.4.4]{pirk} is easy to prove that
    \begin{align}\label{cconvrac}
        \cconv(A) = \cconv_{\Q}(A),
    \end{align}
    and then obtaining (\ref{Ivank2}) is equivalent to
    \begin{align}\label{Ivank3}
        \forall \eps>0\, \forall z \in X \hspace*{-0.7mm} \left(\norm{z} \geq 1 \lor z \in \cconv_{\Q}\left\{ \dfrac{x+y}{2}: x,y \in B_X \land \norm{x-y} > \delta - \eps \right\}\right)\hspace*{-0.7mm}.
    \end{align}
    Furthermore, by a simple density argument, (\ref{Ivank3}) is equivalent to
    \begin{align}\label{Ivank4}
        \forall \eps>0\, \forall z \in D \hspace*{-0.7mm} \left(\norm{z} \geq 1 \lor z \in \cconv_{\Q}\left\{ \dfrac{x+y}{2}: x,y \in B_X \land \norm{x-y} > \delta - \eps \right\}\right)\hspace*{-0.7mm}.
    \end{align}
    and using the density of $D \cap \interior{B_X}$ in $B_X$ we can prove that
    \begin{align}\label{Ivank5}
        & \cconv_{\Q}\left\{ \frac{x+y}{2}: x,y \in B_X \land \norm{x-y} > \delta - \eps \right\} = \\
        & \cconv_{\Q}\left\{ \frac{x+y}{2}: x,y \in \interior{B_X} \cap D \land \norm{x-y} > \delta - \eps \right\}.\nonumber
    \end{align}
    By (\ref{Ivank4}) and (\ref{Ivank5}) it is now clear that (\ref{Ivank1}) is equivalent to (2).
\end{proof}

\begin{theorem}\label{LD2Pteo}
    For any $0< \delta \leq 2$ the isometry class $\widehat{LD\delta P}$ is $G_\delta$-complete in $\cI$.
\end{theorem}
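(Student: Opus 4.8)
The plan is to combine the norm-only characterisation of the $LD\delta P$ obtained in Lemma~\ref{LD2Plema} with a direct complexity count showing that $\widehat{LD\delta P}$ is $G_\delta$ in $\cI$, and then to upgrade $G_\delta$ to $G_\delta$-complete using Proposition~\ref{NoFsig1} and \cite[22.11]{kech}, exactly as in the proof of Theorem~\ref{D2Pteo}. First I would record three elementary facts: for each $v\in V$ the evaluation map $\mu\mapsto\mu(v)$ is continuous on $\cI$, since the topology is the pointwise one inherited from $\R^V$; one has $\hat\mu(\bar v)=\mu(v)$; and $V$ is a $\Q$-vector space, so any rational convex combination $w=\sum_{j=1}^r\lambda_j\tfrac{x_j+y_j}{2}$ of midpoints of elements of $V$ again lies in $V$, with $\hat\mu(\bar z-\bar w)=\mu(z-w)$. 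Applying Lemma~\ref{LD2Plema} with the dense set $D=\bar V$ (which is $V$ itself when $\cI=\cB$), and noting that it suffices to let $\eps$ and the approximation parameter run through $\{1/m:m\in\N\}$ and $\{1/k:k\in\N\}$ by monotonicity, one obtains that $\mu\in\widehat{LD\delta P}$ if and only if
\[
  \forall m\in\N\ \forall z\in V\colon\quad \mu(z)\ge 1 \ \ \lor\ \ \forall k\in\N\ \exists\sigma\in\Sigma\colon\ \Theta(\mu,z,m,k,\sigma),
\]
where $\Sigma$ is the countable set of finite tuples $\sigma=(\lambda_1,x_1,y_1,\dots,\lambda_r,x_r,y_r)$ with $r\in\N$, $x_j,y_j\in V$, $\lambda_j\in\Q\cap[0,1]$, $\sum_{j}\lambda_j=1$, and $\Theta(\mu,z,m,k,\sigma)$ is the conjunction of $\mu(x_j)<1$, $\mu(y_j)<1$ and $\mu(x_j-y_j)>\delta-1/m$ for $j=1,\dots,r$, together with $\mu\big(z-\sum_{j=1}^r\lambda_j\tfrac{x_j+y_j}{2}\big)<1/k$.

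Then I would carry out the complexity count. For fixed $\sigma$ every conjunct of $\Theta(\cdot,z,m,k,\sigma)$ is of the form $\{\mu:\mu(v)<t\}$ or $\{\mu:\mu(v)>t\}$ for suitable $v\in V$, $t\in\R$, hence open by continuity of the evaluation maps; so $\{\mu:\Theta(\mu,z,m,k,\sigma)\}$ is open as a finite intersection, $\{\mu:\exists\sigma\,\Theta(\mu,z,m,k,\sigma)\}$ is open since $\Sigma$ is countable, and therefore $\{\mu:\forall k\ \exists\sigma\,\Theta(\mu,z,m,k,\sigma)\}$ is $G_\delta$. Since $\{\mu:\mu(z)\ge 1\}$ is closed, hence $G_\delta$, and a finite union of $G_\delta$ sets is again $G_\delta$ (via $\bigcap_i U_i\cup\bigcap_j W_j=\bigcap_{i,j}(U_i\cup W_j)$), the bracketed set is $G_\delta$ for each fixed pair $(m,z)$; intersecting over the countably many $(m,z)\in\N\times V$ shows that $\widehat{LD\delta P}$ is $G_\delta$ in $\cI$. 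Finally, $\widehat{LD\delta P}$ is not $F_\sigma$ in $\cI$ by Proposition~\ref{NoFsig1}, and a $G_\delta$ subset of a Polish space that is not $F_\sigma$ is $G_\delta$-complete by \cite[22.11]{kech}, which yields the theorem.

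Since the genuinely substantive step, namely expressing the $LD\delta P$ purely in terms of the norm on a fixed countable dense subset, has already been isolated in Lemma~\ref{LD2Plema}, what remains is essentially descriptive-set-theoretic bookkeeping, and the only points requiring care are the quantifier alternation: the disjunction with the closed condition $\mu(z)\ge 1$ must not push the class beyond $G_\delta$ (handled by the finite-union-of-$G_\delta$ observation), and the existential over rational convex combinations must remain a countable union of open sets, which is precisely why Lemma~\ref{LD2Plema} is stated with $\cconv_{\Q}$ rather than $\cconv$.
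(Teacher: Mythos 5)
Your proposal is correct and follows essentially the same route as the paper's proof: both discretize $\eps$ and the closure parameter over $\{1/m\}$ and $\{1/k\}$, rewrite the characterisation from Lemma~\ref{LD2Plema} over the countable dense set $\bar V$ as a countable intersection of sets of the form (closed) $\cup$ ($G_\delta$), and then invoke Proposition~\ref{NoFsig1} together with \cite[22.11]{kech} to upgrade to $G_\delta$-completeness. Your explicit remark that a finite union of $G_\delta$ sets is $G_\delta$ is a detail the paper leaves implicit, but the argument is otherwise the same.
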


\begin{proof}
    By Lemma \ref{LD2Plema} and using that $\bar V$ is dense in $X_\mu$ for any given $\mu \in \cI$, we know that $\mu \in \widehat{LD\delta P}$ iff for every $\eps >0$ and $u \in \bar V$
    \begin{align}\label{LD2P1}
        \hat{\mu}(\bar u) \geq 1 \lor \bar u \in \cconv_{\Q}\left\{\dfrac{\bar x+ \bar y}{2}: \bar x, \bar y \in \interior{B_{X_\mu}} \cap \bar V \land \hat{\mu}(\bar x- \bar y) > \delta - \eps \right\}.
    \end{align}
    Then, by defining for every $n \in \N$ the countable set
    $$Q_n=\left\{\lambda \in \Q^n: \lambda_i > 0, i=1, \cdots, n \land \sum_{i=1}^n \lambda_i = 1\right\}$$
    we obtain that (\ref{LD2P1}) is equivalent to
        \begin{align}\label{LD2P2}
        & \forall m \in \N\, \forall u \in V \Biggl[ \mu(u) \geq 1 \lor \forall p \in \N\, \exists n \in \N\, \exists \lambda \in Q_n\, \exists \Vec{x}, \Vec{y} \in V^n \\
        & \Biggl( \mu\left(u-\sum_{i=1}^n \lambda_i \dfrac{x_i+y_i}{2}\right) < \dfrac{1}{p} \land  \forall i \in \{1, \cdots, n\}\, \mu(x_i) < 1 \, \land  \nonumber\\
        & \mu(y_i) < 1 \land \mu(x_i-y_i) > \delta-\dfrac{1}{m}\Biggr)\Biggr].\nonumber
    \end{align}
    Lastly, defining for $w \in V$ and $t \in \R$ the closed set
    \begin{align}
        C^0_{w,t} & = \{\nu \in \cI: \nu(w) \geq t\}
    \end{align}
    and the open sets
    \begin{align}
        C^1_{w,t} & = \{\nu \in \cI: \nu(w) < t\},\\
        C^2_{w,t} & = \{\nu \in \cI: \nu(w) > t\},
    \end{align}
    we obtain by (\ref{LD2P2})  that
    \begin{align}
        \LDP = & \bigcap_{m \in \N} \bigcap_{u \in V} \Biggl[ C^0_{u,1} \cup \bigcap_{p \in  \N} \bigcup_{n \in \N} \bigcup_{\lambda \in Q_n} \bigcup_{\Vec{x} \in V^n} \bigcup_{\Vec{y} \in V^n} \Biggl( C^1_{u-\sum_{i=1}^n \lambda_i \frac{x_i+y_i}{2}, 1/p}\, \cap \\
        & \bigcap_{i=1}^n \left( C^1_{x_i,1} \cap C^1_{y_i,1} \cap C^2_{x_i-y_i,\delta-1/m} \right) \Biggr)\Biggr]\nonumber
    \end{align}
    showing that, in fact, $\widehat{LD\delta P}$ is $G_\delta$ in $\cI$.

    Finally, to prove that this class is $G_\delta$-complete we just need to recall that $\widehat{LD\delta P}$ is not $F_\sigma$ by Corollary \ref{NoFsig1} and, because of \cite[22.11]{kech}, this implies that it is indeed $G_\delta$-complete as wanted.
\end{proof}

\begin{corollary}
    The class of all spaces which have a dentable ball is $F_{\sigma\delta}$ in $\mathcal{I}$.
\end{corollary}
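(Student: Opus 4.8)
The plan is to reduce the corollary to Theorem~\ref{LD2Pteo} by writing the class of spaces with dentable ball as a countable intersection of complements of the classes $\widehat{LD\delta P}$.

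First I would unwind the definitions. Recall that $B_X$ is \emph{dentable} exactly when for every $\eps>0$ there is a slice of $B_X$ of diameter strictly less than $\eps$; equivalently, $X$ has a dentable ball if and only if $X$ fails the $LD\delta P$ for every $0<\delta\le 2$. Since testing this along the sequence $\delta=1/n$, $n\in\N$ (note that $1/n\le 2$ for all $n\ge 1$), loses nothing, one obtains the set-theoretic identity
\[
\{\mu\in\cI: X_\mu \text{ has a dentable ball}\}=\bigcap_{n=1}^{\infty}\bigl(\cI\setminus\widehat{LD(1/n)P}\bigr).
\]
The inclusion ``$\subseteq$'' is immediate from the definition of dentability. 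For ``$\supseteq$'' one argues as follows: given $\eps>0$, choose $n$ with $1/n<\eps$; since $X_\mu\notin\widehat{LD(1/n)P}$ there is a slice of $B_{X_\mu}$ of diameter at most $1/n<\eps$, so $B_{X_\mu}$ is dentable. This is the only genuinely mathematical step, and it is entirely elementary.

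Next I would invoke Theorem~\ref{LD2Pteo}: for each $n\in\N$ the isometry class $\widehat{LD(1/n)P}$ is $G_\delta$ in $\cI$, hence its complement $\cI\setminus\widehat{LD(1/n)P}$ is $F_\sigma$ in $\cI$. Consequently the right-hand side of the displayed identity is a countable intersection of $F_\sigma$ sets, which is by definition an $F_{\sigma\delta}$ set, completing the argument.

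As for difficulties, there are essentially none of a descriptive set theoretic nature: all the real work is already contained in Theorem~\ref{LD2Pteo}. The only point requiring a little care is to keep track of the direction of the definitions — ``dentable'' corresponds to the \emph{failure} of the diameter-$\delta$ properties, not to their satisfaction — and to observe that restricting attention to the countably many values $\delta=1/n$ is harmless. Note also that, in contrast with the earlier results, we only claim the upper bound $F_{\sigma\delta}$ here and do not assert $F_{\sigma\delta}$-completeness.
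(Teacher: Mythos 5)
Your proof is correct and follows essentially the same route as the paper: the paper writes the class of spaces \emph{without} a dentable ball as $\bigcup_{n}\widehat{LD\delta_n P}$ with $\delta_n=1/n$, concludes it is $G_{\delta\sigma}$ by Theorem \ref{LD2Pteo}, and passes to the complement, which is exactly your intersection $\bigcap_n\bigl(\cI\setminus\widehat{LD\delta_n P}\bigr)$. The elementary reduction to the countably many values $\delta=1/n$ and the appeal to Theorem \ref{LD2Pteo} are the same in both arguments.
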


\begin{proof}
    We have that $X$ fails to have a dentable ball if, and only if, $X$ has the $LD\delta P$ for some $0 < \delta \leq 2$, so the class of spaces that do not have a dentable ball is
    \begin{align}\label{denta}
        \bigcup_{n \in \N} \widehat{LD\delta_n P}
    \end{align}
    with $\delta_n=1/n$.
    
    By Theorem \ref{LD2Pteo} we know that $\widehat{LD\delta_n P}$ are all $G_\delta$ in $\cI$ and because of (\ref{denta}) we conclude that the class of spaces that do not have a dentable ball is $G_{\delta\sigma}$.
\end{proof}

Up to our knowledge, for the SD2P it is not known any purely geometric characterisation so we must introduce a new one which makes no use of the dual space, but before that it is worth noticing that by J. Bourgain's lemma we can equivalently change the convex combinations of slices in the definition by convex combinations of relatively weakly open subsets of the unit ball (c.f. e.g. \cite[Lemma II.1]{ggms}). Furthermore, using \cite[Lemma 1.4.4]{pirk} we can also equivalently change the convex combinations of slices by means of slices or relatively weakly open subsets of the unit ball.

The next lemma is the desired geometric characterization which, even though it does not seem to be very operative, will allows us to express the SD2P only in terms of the norm of the space:

\begin{lemma}\label{SD2Plema}
    Let $X$ be a Banach space. The following are equivalent:
    \begin{enumerate}
        \item $X$ has the $SD2P$.
        \item For every $n \in \N$, $x_1, \cdots, x_n \in B_X$ and $\eps >0$, there exist an $m \in \N$, some $y_{ij} \in B_X$ for $i=1, \cdots, n$, $j=1, \cdots, m$ and $\alpha_1, \cdots, \alpha_m >0$  with $\sum_{j=1}^m \alpha_j =1$ such that for every $i=1, \cdots, n$
        \begin{align}
            \norm{x_i - \sum_{j=1}^m \alpha_j y_{ij}} < \eps
        \end{align}
        and for every $j=1, \cdots, m$
        \begin{align}
            \norm{\sum_{i=1}^n \dfrac{1}{n} y_{ij}} > 1-\eps.
        \end{align}
    \end{enumerate}
\end{lemma}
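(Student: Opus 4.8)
The plan is to prove the two implications separately, transferring between the definition of the SD2P via convex combinations of slices and the ``norm-only'' condition (2) using the duality between slices and functionals together with a standard weak-compactness/Bourgain-type argument. As noted just before the statement, by Bourgain's lemma (see \cite[Lemma II.1]{ggms}) and \cite[Lemma 1.4.4]{pirk} we may freely replace convex combinations of slices in the definition of the SD2P by convex combinations of nonempty relatively weakly open subsets of $B_X$; this is the form I will actually work with.

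\emph{From (1) to (2).} Fix $n\in\N$, $x_1,\dots,x_n\in B_X$ and $\eps>0$. For each $i$ choose $f_i\in S_{X^*}$ with $f_i(x_i)>1-\eps/2$ and form the slices $S_i=S(B_X,f_i,\eps/2)$, so $x_i\in S_i$. Consider the convex combination $C=\frac1n\sum_{i=1}^n S_i$. Since $X$ has the SD2P, $\diam C=2$, so we can pick two points of $C$ at distance $>2-\eps'$ for a suitably small $\eps'$; writing these as $\frac1n\sum_i u_i$ and $\frac1n\sum_i v_i$ with $u_i,v_i\in S_i$, the inequality $\norm{\frac1n\sum_i(u_i-v_i)}>2-\eps'$ forces $\norm{\frac1n\sum_i u_i}$ to be close to $1$ (and likewise for the $v_i$): indeed $2-\eps'<\norm{\frac1n\sum_i u_i}+\norm{\frac1n\sum_i v_i}\le 2$. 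Averaging the two families $\{u_i\},\{v_i\}$ into a single list of $m=2$ columns with weights $\alpha_1=\alpha_2=\tfrac12$, setting $y_{i1}=u_i$, $y_{i2}=v_i$, gives $\norm{\sum_j\alpha_j y_{ij}}=\norm{\tfrac12(u_i+v_i)}$, but this need not be close to $x_i$. To fix this, one instead works with relatively weakly open sets $U_i\ni x_i$ small enough that $\diam U_i$ is irrelevant and uses that the SD2P lets us find points \emph{inside each} $U_i$ whose weighted average (over the combination columns) stays near $x_i$ while each column has norm $>1-\eps$; concretely, apply the SD2P to the single combination $\frac1n\sum_i U_i$ enough times, or more cleanly apply the characterisation of the SD2P that every convex combination of relatively weakly open sets has \emph{norm} arbitrarily close to the boundary simultaneously on all coordinates. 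This yields $m$, the $y_{ij}$ and the $\alpha_j$ with $\norm{x_i-\sum_j\alpha_j y_{ij}}<\eps$ and $\norm{\sum_i \frac1n y_{ij}}>1-\eps$.

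\emph{From (2) to (1).} Conversely, take a convex combination of slices $C=\sum_{k=1}^n\lambda_k S(B_X,f_k,\beta)$ and $\eps>0$; we must produce two points of $C$ nearly $2$ apart. Pick $x_k\in S(B_X,f_k,\beta)$ with $f_k(x_k)$ very close to $1$, apply (2) to $x_1,\dots,x_n$ with a small parameter $\eps'$ to get $m$, points $y_{kj}\in B_X$ and weights $\alpha_j$. For each fixed $j$ the point $z_j:=\sum_k\lambda_k y_{kj}$ would be a point of $C$ provided each $y_{kj}$ still lies in the corresponding slice; the smallness of $\norm{x_k-\sum_j\alpha_j y_{kj}}$ forces, for a set of indices $j$ of total $\alpha$-weight close to $1$, that $f_k(y_{kj})$ is close to $f_k(x_k)$, hence $y_{kj}\in S(B_X,f_k,\beta)$ after slightly enlarging $\beta$ (harmless, since $\beta$ was arbitrary). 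Among those good columns $j$, the hypothesis $\norm{\sum_k\frac1n y_{kj}}>1-\eps'$ together with the uniform-convexity-free ``column near the sphere'' information lets us separate two such columns $j,j'$ with $\norm{\sum_k\frac1n(y_{kj}-y_{kj'})}$ close to $2$; but actually we need the distance between the $C$-combinations $\sum_k\lambda_k y_{kj}$ and $\sum_k\lambda_k y_{kj'}$, so instead one uses a single column $j$ with $\norm{\sum_k\frac1n y_{kj}}>1-\eps'$ and exploits, via Hahn--Banach, a norm-one $g$ with $g(\sum_k\frac1n y_{kj})>1-\eps'$, concluding $g(y_{kj})>1-n\eps'$ for all $k$, so all $y_{kj}$ lie in the slice $S(B_X,g,n\eps')$; replacing the original slices by this common slice reduces everything to the LD2P-type step and gives $\diam C>2-\eps$.

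\emph{Main obstacle.} The delicate point is the bookkeeping in both directions that keeps the weighted averages close to the prescribed points $x_i$ while simultaneously forcing \emph{every} column $\sum_i\frac1n y_{ij}$ (not merely the global average) to have norm near $1$; this is exactly where the SD2P — as opposed to the mere D2P — is needed, and it is the reason a genuinely new geometric reformulation is required rather than a cosmetic rewrite. I expect the cleanest route is to phrase condition (2) as an equivalent statement about convex combinations of relatively weakly open sets and then to use Bourgain's lemma to pass to slices only at the very end; the rest is a density/perturbation argument of the kind already carried out in the proof of Lemma \ref{LD2Plema}.
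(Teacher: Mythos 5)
Your overall architecture (two implications, weakly open sets around the $x_i$, and a weight-counting argument to find a common column $j$) matches the paper's, and the averaging estimate you use in (2)$\Rightarrow$(1) — that $f_k\bigl(\sum_j\alpha_j y_{kj}\bigr)$ close to $1$ forces $f_k(y_{kj})$ close to $1$ for a set of columns of $\alpha$-weight near $1$ — is exactly the paper's computation with the sets $G_i$. However, both implications as written have a genuine gap at their decisive step. In (1)$\Rightarrow$(2), after producing for each weak neighbourhood $U$ of $0$ points $x^i_U\in(x_i+U)\cap B_X$ with $\norm{\sum_i\frac{1}{n} x^i_U}>1-\eps$, you assert that this ``yields'' $\norm{x_i-\sum_j\alpha_j y_{ij}}<\eps$. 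It does not: membership in $x_i+U$ only makes $x^i_U$ \emph{weakly} close to $x_i$, and no single choice of $U$ gives norm-closeness. The missing mechanism is Mazur's theorem applied to the whole net: the paper forms $z_U=(x^1_U,\dots,x^n_U)$ in $\bigoplus_{i=1}^n X$ with the maximum norm, notes that $z_U$ converges weakly to $(x_1,\dots,x_n)$, and extracts a finite convex combination $\sum_j\alpha_j z_{U_j}$ norm-close to $(x_1,\dots,x_n)$; the columns of that combination are the $y_{ij}$. Without this (or an equivalent device) the first displayed estimate in (2) is unproved.

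In (2)$\Rightarrow$(1) the closing move is not valid. Once you have a common good column $j$, you correctly obtain a point $\sum_k\lambda_k y_{kj}\in C$ of norm $>1-\eps'$, but you then place all the $y_{kj}$ in a common slice $S(B_X,g,n\eps')$ and claim this ``reduces everything to the LD2P-type step and gives $\diam C>2-\eps$.'' Knowing that a single slice containing the $y_{kj}$ has large diameter says nothing about $\diam C$: two far-apart points of that slice need not decompose as $\sum_k\lambda_k w_k$ with each $w_k$ in the corresponding original slice. What actually closes the argument — and what the paper invokes at both ends — is \cite[Theorem 3.1]{lmr}: $X$ has the SD2P if and only if every convex combination of slices (equivalently, of relatively weakly open sets) of $B_X$ contains points of norm arbitrarily close to $1$. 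With that characterization, exhibiting the single element $\sum_k\frac{1}{n} y_{kj}\in C$ of norm $>1-\eps$ already finishes the proof; without it, your argument does not reach the conclusion that $\diam C=2$.
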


\begin{proof} 1)$\Rightarrow$2).    Let $\eps>0, n \in \N$ and $x_1, \cdots, x_n \in B_X$ be given and start by assuming that $X$ has the SD2P.

    For any $U \in \mathcal{W}$, where $\mathcal{W}$ is a $w$-open neighbourhood basis of $0$, we have that
    $$\sum_{i=1}^n \dfrac{1}{n} \left[ (x_i + U) \cap B_X \right]$$
    is a mean of relatively weakly open subsets of $B_X$. By \cite[Theorem 3.1]{lmr} we can obtain, for every $i=1, \cdots, n$, a net $\{x_U^i\}_{U \in \mathcal{W}}$ such that $x_U^i \in x_i + U$ for every $U \in \mathcal{W}$, it weakly converges to $x_i$ and it is satisfied that
    \begin{align}\label{caracSD2P1}
        \forall U \in \mathcal{W} \norm{\sum_{i=1}^n \dfrac{1}{n} x_U^i} > 1-\eps.
    \end{align}
    We can then define in $Z=\bigoplus_{i=1}^n X$ (with the maximum norm) the net $\{z_U\}_{U \in \mathcal{W}}$ by $z_U=(x_U^1, \cdots, x_U^n)$ and observe that, because the weak topology on the product is the product of the weak topologies, it weakly converges to $z=(x_1, \cdots, x_n)$. Then, by Mazur's theorem, we have that
    $$z \in \overline{\{z_U: U \in \mathcal{W}\}}^w \subset \overline{\conv\{z_U: U \in \mathcal{W}\}}^w = \cconv\{z_U: U \in \mathcal{W}\},$$
    so there exist an $m \in \N$, some $\alpha_1, \cdots, \alpha_m >0$  with $\sum_{j=1}^m \alpha_j =1$ and $U_1, \cdots, U_m \in \mathcal{W}$ such that
    \begin{align}\label{caracSD2P2}
        \norm{z - \sum_{j=1}^m \alpha_j z_{U_j}}_\infty < \eps.
    \end{align}
    If we denote $y_{ij} = x_{U_j}^i$, by (\ref{caracSD2P1}) and (\ref{caracSD2P2}) the proof of the implication is finished.

    2)$\Rightarrow$1).  Let
    $$C=\sum_{i=1}^n \dfrac{1}{n} S(B_X, f_i, \alpha_i)$$
    a convex combination of slices of $B_X$ and $\eps >0$ be given. Without lost of generality we can assume that all the $\alpha_i$ are equal to some $0 < \alpha < 1$.
    
    We can then take for every $i=1, \cdots, n$ an
    $$x_i \in S\left(B_X, f_i, \left( \dfrac{\alpha}{n} \right)^2 \right)$$
    and $0<r<\eps$ such that for every $i= 1, \cdots, n$
    \begin{align}\label{caracSD2P4}
        f_i(x_i) > \left(1-\left( \dfrac{\alpha}{n} \right)^2  \right) + r.
    \end{align}
    By (2) we have that there exists an $m \in \N$, some $y_{ij} \in B_X$ for $i=1, \cdots, n$, $j=1, \cdots, m$ and $\alpha_1, \cdots, \alpha_m >0$  with $\sum_{j=1}^m \alpha_j =1$ such that for every $i=1, \cdots, n$
        \begin{align}\label{caracSD2P5}
            \norm{x_i - \sum_{j=1}^m \alpha_j y_{ij}} < r
        \end{align}
        and for every $j=1, \cdots, m$
        \begin{align}\label{caracSD2P6}
            \norm{\sum_{i=1}^n \dfrac{1}{n} y_{ij}} > 1-r > 1-\eps.
        \end{align}
        By (\ref{caracSD2P4}) and (\ref{caracSD2P5}) we then obtain that for every $i=1, \cdots, n$
        \begin{align}\label{caracSD2P7}
            \sum_{j=1}^m \alpha_j y_{ij} \in S\left(B_X, f_i, \left( \dfrac{\alpha}{n} \right)^2 \right),
        \end{align}
        and then, that for every $i=1, \cdots, n$ there exists some $j_i \in \{1, \cdots, m\}$ such that
        \begin{align}\label{caracSD2P7.5}
            y_{ij_i} \in S\left(B_X, f_i, \left( \dfrac{\alpha}{n} \right)^2 \right) \subset S\left(B_X, f_i, \dfrac{\alpha}{n} \right) \subset S(B_X, f_i,\alpha).
        \end{align}
        If we can prove that there exists a common $j$ such that (\ref{caracSD2P7.5}) is satisfied for every $i=1, \cdots, n$, then $\sum_{i=1}^n \frac{1}{n} y_{ij} \in C$ and by (\ref{caracSD2P6}) and \cite[Theorem 3.1]{lmr} we would conclude that $X$ has the SD2P as we want to.

        To accomplish this it is enough to show that if we define for $i=1, \cdots, n$
        $$G_i=\left\{j \in \{1, \cdots, m\}: f_i(y_{ij}) > 1-\dfrac{\alpha}{n}\right\},$$
        (which are non-empty by (\ref{caracSD2P7.5})) we have that
        \begin{align}\label{caracSD2P8}
            G=\bigcap_{i=1}^n G_i \not = \emptyset,
        \end{align}
        which will be proved if we show that for every $j=1, \cdots, m$
        \begin{align}\label{caracSD2P9}
            \sum_{j \not \in G_i} \alpha_j < \dfrac{\alpha}{n}.
        \end{align}
        In fact, if (\ref{caracSD2P9}) is true then
        $$\sum_{j \not \in G} \alpha_j \leq \sum_{i=1}^n \sum_{j \not \in G_i} \alpha_j < \sum_{i=1}^n \dfrac{\alpha}{n} = \alpha < 1,$$
        and because $\sum_{j=1}^m \alpha_j = 1$ we conclude that $G \not = \emptyset$ as wanted.

        To prove (\ref{caracSD2P9}) we just need to observe that by (\ref{caracSD2P7}) we have that for $i=1, \cdots, n$
        \begin{align*}
            1-\left( \dfrac{\alpha}{n} \right)^2 & < \sum_{j \in G_i} \alpha_j f(y_{ij}) + \sum_{j \not \in G_i} \alpha_j f(y_{ij}) \leq \sum_{j \in G_i} \alpha_j + \sum_{j \not \in G_i} \alpha_j \left( 1-\dfrac{\alpha}{n} \right)=\\
            & = 1-\dfrac{\alpha}{n} \sum_{j \not \in G_i} \alpha_j,
        \end{align*}
        and reordering this inequality we obtain (\ref{caracSD2P9}) and conclude the proof.
\end{proof}

By density arguments similar to the ones given in the proof of Lemma \ref{LD2Plema} it is easy to conclude from Lemma \ref{SD2Plema} that if $D$ is dense in $X$ then it has the SD2P iff
\begin{align}\label{largaSD2P}
    & \forall \eps >0\, \forall n \in \N \,\forall x \in D^n \Biggl[ \exists i \in \{1, \cdots, n\} \norm{x_i} \geq 1\, \lor\\
    & \exists m \in \N\, \exists y \in \left( D^m \right)^n\, \exists \alpha \in Q_m \Biggl( \forall i \in \{1, \cdots, n\}\,  \forall j \in \{1, \cdots, m\} \nonumber\\ 
    & \norm{y_{ij}} < 1 \land \forall i \in \{1, \cdots, n\} \norm{x_i - \sum_{j=1}^m \alpha_j y_{ij}} < \eps\, \land \nonumber\\
    & \forall j \in \{1, \cdots, m\} \norm{\sum_{i=1}^n \dfrac{1}{n} y_{ij}} > 1 - \eps \Biggr)\Biggr].\nonumber
\end{align}
Using (\ref{largaSD2P}) and similar arguments to the ones given in Theorem \ref{LD2Pteo} we obtain the following theorem.

\begin{theorem}\label{SD2Pteo}
    The isometry class $\SDP$ is $G_\delta$-complete in $\cI$.
\end{theorem}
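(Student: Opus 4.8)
The plan is to follow the template already established for $\widehat{LD\delta P}$ in Theorem \ref{LD2Pteo}, now using the geometric characterisation of the $SD2P$ given in Lemma \ref{SD2Plema} together with its dense reformulation \eqref{largaSD2P}. First I would apply \eqref{largaSD2P} with $D=\bar V$ (which is dense in $X_\mu$ by construction, and in $\cB$ may be taken to be $V$ itself), so that $\mu\in\SDP$ if and only if the quantified statement \eqref{largaSD2P} holds with all vectors ranging over $V^n$, all norms replaced by $\mu(\cdot)$, and all the $\alpha_j$ ranging over the countable set $Q_m$. Since $\eps>0$ may be replaced by $1/k$ with $k\in\N$, and $\sum_{i=1}^n\frac1n y_{ij}$ is again an element of $V$ whose $\mu$-value is a continuous function of $\mu$, every atomic subformula appearing is of one of the forms $\mu(w)\geq t$, $\mu(w)<t$, or $\mu(w)>t$ for some $w\in V$ depending continuously (indeed, fixed once the quantified data is fixed) on nothing but $\mu$.

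Next I would write out the resulting set-theoretic expression explicitly, exactly as in the proof of Theorem \ref{LD2Pteo}. Using the closed sets $C^0_{w,t}=\{\nu\in\cI:\nu(w)\geq t\}$ and the open sets $C^1_{w,t}=\{\nu\in\cI:\nu(w)<t\}$, $C^2_{w,t}=\{\nu\in\cI:\nu(w)>t\}$ already introduced, \eqref{largaSD2P} translates into
\begin{align*}
    \SDP = & \bigcap_{k\in\N}\bigcap_{n\in\N}\bigcap_{\vec x\in V^n}\Biggl[\ \bigcup_{i=1}^n C^0_{x_i,1}\ \cup\ \bigcup_{m\in\N}\ \bigcup_{\alpha\in Q_m}\ \bigcup_{\vec y\in (V^m)^n}\Biggl(\ \bigcap_{i=1}^n\bigcap_{j=1}^m C^1_{y_{ij},1}\ \cap\\
    & \bigcap_{i=1}^n C^1_{x_i-\sum_{j=1}^m\alpha_j y_{ij},\,1/k}\ \cap\ \bigcap_{j=1}^m C^2_{\sum_{i=1}^n\frac1n y_{ij},\,1-1/k}\ \Biggr)\Biggr].
\end{align*}
All unions and intersections are countable (here $V$, $Q_m$, $\N$ are countable), $C^0$ is closed and $C^1,C^2$ are open because the topology on $\cI$ is the pointwise one; hence the bracketed set, being a countable union of (closed $\cup$ open) $=$ (closed) pieces, is in the worst case a countable union of $F_\sigma$... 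I would instead argue more carefully that each $C^0_{x_i,1}$ is closed hence $G_\delta$, and that the innermost parenthesised set is open (a finite intersection of open sets), so each bracket is a countable union of a finite family of closed sets with an open set, which is $F_\sigma$; but we need $G_\delta$. The correct reading, matching Theorem \ref{LD2Pteo}, is that the bracket is of the form ($G_\delta$ closed set) $\cup$ ($F_\sigma$ inner union), and the whole intersection over $k,n,\vec x$ is then $G_\delta$ provided we re-express so the only ``bad'' (open, i.e. $F_\sigma$-complement) ingredients sit inside a countable intersection; this is exactly the bookkeeping done in Theorem \ref{LD2Pteo}, where a countable intersection of (closed $\cup$ $F_\sigma$) is shown to be $G_\delta$ because closed sets are $G_\delta$ and a countable intersection of $G_\delta$ sets is $G_\delta$, while each $C^1,C^2$ is open hence $G_\delta$. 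Thus $\SDP$ is $G_\delta$ in $\cI$.

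Finally, for the completeness half: $\SDP$ is not $F_\sigma$ in $\cI$ by Proposition \ref{NoFsig1}. Combining this with the fact just proved that $\SDP$ is $G_\delta$, an application of \cite[22.11]{kech} (every $G_\delta$ set that is not $F_\sigma$ in a Polish space is $G_\delta$-complete) yields that $\SDP$ is $G_\delta$-complete in $\cI$, which is what we wanted.

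The main obstacle I anticipate is purely the descriptive-set-theoretic bookkeeping in the middle step: verifying that, after substituting the dense set $\bar V$ and rational coefficients, the expression \eqref{largaSD2P} genuinely collapses to a \emph{countable} combination of the basic open/closed sets $C^i_{w,t}$ with the nesting of quantifiers arranged so that the result is $G_\delta$ and not merely $F_{\sigma\delta}$. Concretely one must check that each existential block ($\exists m$, $\exists\alpha\in Q_m$, $\exists\vec y$) becomes a countable union that lands underneath the universal (intersection) quantifiers over $k$, $n$, $\vec x$, so that the overall complexity is $\bigcap(\text{closed}\cup\text{open})$, which is $G_\delta$. This is routine given the model provided by Theorem \ref{LD2Pteo}, but it is the only place where care is genuinely needed; the geometric input is entirely supplied by Lemma \ref{SD2Plema} and \eqref{largaSD2P}, and the lower bound is immediate from Proposition \ref{NoFsig1}.
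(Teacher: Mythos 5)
Your proposal is correct and follows essentially the same route as the paper, which likewise obtains the $G_\delta$ upper bound by translating \eqref{largaSD2P} (with $D=\bar V$ and $\eps=1/k$) into a countable combination of the basic sets $C^0_{w,t}$, $C^1_{w,t}$, $C^2_{w,t}$ exactly as in Theorem \ref{LD2Pteo}, and then deduces completeness from Proposition \ref{NoFsig1} together with \cite[22.11]{kech}. One correction to your bookkeeping, which your displayed formula already makes unnecessary: the inner existential block $\bigcup_{m}\bigcup_{\alpha}\bigcup_{\vec y}(\cdots)$ is a countable union of \emph{open} sets and is therefore itself open, so each bracket is a finite union of a closed set and an open set, hence a finite union of $G_\delta$ sets and thus $G_\delta$ --- whereas your intermediate assertion that a countable intersection of sets of the form (closed $\cup$ $F_\sigma$) is $G_\delta$ is false in general (it only gives $F_{\sigma\delta}$) and is not the argument needed here.
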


Banach spaces having the Daugavet property have a similar geometric characterization to the one given in (\ref{Ivank1}) for the LD2P, that is, a Banach space $X$ has the DP iff for every $\eps >0$ and $x \in S_X$ we have that
\begin{align}\label{werner1}
B_X \subset \cconv\{y \in B_X: \norm{x-y} > 2-\eps\}.    
\end{align}

A proof of this characterization can be found in \cite[Corollary 2.3]{werner} and by a simple argument we can eliminate the dependence on $S_X$ obtaining that (\ref{werner1}) is equivalent to
\begin{align}\label{werner2}
\forall \eps>0 \, \forall x \in X \norm{x}B_X \subset \cconv\{y \in \norm{x} B_X: \norm{x-y} > 2\norm{x}-\eps\}.    
\end{align}

This similarity with (\ref{Ivank1}) allows us to prove in an analogous way to Lemma \ref{LD2Plema} the following one:

\begin{lemma}\label{DPlema}
    Let $X$ be a Banach space and $D\subset X$ a dense subset. The following are equivalent:
    \begin{enumerate}
        \item $X$ has the DP.
        \item For every $\eps >0$ and $x, z \in D$
        $$\norm{z} \geq \norm{x} \lor z \in \cconv_{\Q}\left\{ y \in \norm{x} \interior{B_X} \cap D: \norm{x-y} > 2\norm{x}-\eps \right\}.$$
    \end{enumerate}
\end{lemma}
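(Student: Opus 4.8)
The plan is to mimic closely the proof of Lemma \ref{LD2Plema}, replacing the role of characterisation (\ref{Ivank1}) by characterisation (\ref{werner2}), and carrying the extra parameter $x$ (which now rescales the ball by $\norm{x}$) through the chain of equivalences. First I would rewrite (\ref{werner2}) in the ``pointwise'' form: it is equivalent to
\begin{align*}
    \forall \eps>0\, \forall x\in X\, \forall z\in X\left(\norm{z}\geq\norm{x}\lor z\in\cconv\{y\in\norm{x}B_X:\norm{x-y}>2\norm{x}-\eps\}\right),
\end{align*}
exactly as (\ref{Ivank2}) was obtained from (\ref{Ivank1}); the point is that $z$ belongs to the open ball $\norm{x}\interior{B_X}$ precisely when $\norm{z}<\norm{x}$, and the containment of the open ball in the closed convex hull follows from the containment of the closed ball together with a standard convexity/scaling argument.

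Next I would pass to rational convex combinations using (\ref{cconvrac}) (i.e. \cite[Lemma 1.4.4]{pirk}), so that $\cconv$ may be replaced by $\cconv_{\Q}$ throughout, and then restrict both the ambient quantified points $x,z$ and the points $y$ generating the convex hull to the dense set $D$. The restriction of $x$ and $z$ to $D$ is a routine density argument, using that both $\norm{\cdot}$ and the membership condition are ``stable'' under small perturbations (if $z$ is in the closed $\Q$-convex hull for all $\eps$, then so is any point close to $z$, after enlarging $\eps$ slightly; and the dichotomy $\norm{z}\geq\norm{x}$ versus membership survives small changes in $x,z$ by absorbing the error into $\eps$). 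The restriction of the generating points $y$ to $\norm{x}\interior{B_X}\cap D$ is the analogue of (\ref{Ivank5}): one checks that
\begin{align*}
    \cconv_{\Q}\{y\in\norm{x}B_X:\norm{x-y}>2\norm{x}-\eps\}=\cconv_{\Q}\{y\in\norm{x}\interior{B_X}\cap D:\norm{x-y}>2\norm{x}-\eps\},
\end{align*}
which holds because $D\cap\norm{x}\interior{B_X}$ is dense in $\norm{x}B_X$ and the strict inequality $\norm{x-y}>2\norm{x}-\eps$ is open in $y$, so any generating point can be approximated by one from the smaller set without changing the closed convex hull.

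The only genuinely new wrinkle compared with Lemma \ref{LD2Plema} — and the step I expect to require the most care — is the bookkeeping around the scaling factor $\norm{x}$: in the LD2P case the ball is fixed, whereas here the relevant ball is $\norm{x}B_X$, and one must make sure that when $x$ is perturbed within $D$ the rescaled balls and the threshold $2\norm{x}-\eps$ move continuously enough that all the density replacements go through uniformly; the degenerate case $x=0$ (where the statement reads $\norm{z}\geq 0$, trivially true) should be noted separately so that no division or normalisation is attempted there. Once these density reductions are in place, assembling the equivalence of (1) and (2) is immediate, exactly as at the end of the proof of Lemma \ref{LD2Plema}.
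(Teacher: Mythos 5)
Your proposal matches the paper's intended argument: the paper itself gives no written proof of this lemma, stating only that it follows ``in an analogous way to Lemma \ref{LD2Plema}'' from the characterization (\ref{werner2}), which is exactly the route you take (pointwise reformulation, passage to $\cconv_{\Q}$ via \cite[Lemma 1.4.4]{pirk}, and density reductions for $x$, $z$ and the generating points $y$, with the degenerate case $x=0$ noted). The details you supply, including the bookkeeping for the scaling factor $\norm{x}$, are correct and fill in precisely what the paper leaves implicit.
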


Then, using this lemma we can also obtain in an analogous way of the proof of Theorem \ref{LD2Pteo} that:

\begin{theorem}\label{Daugteo}
    The isometry class $\DP$ is $G_\delta$-complete in $\cI$.
\end{theorem}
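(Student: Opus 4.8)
The plan is to imitate, almost verbatim, the proof of Theorem~\ref{LD2Pteo}, using Lemma~\ref{DPlema} in place of Lemma~\ref{LD2Plema}. Fix $\mu\in\cI$ and, as usual, use that $\bar V$ is a countable dense subset of $X_\mu$. By Lemma~\ref{DPlema} applied to $D=\bar V$, we have $\mu\in\DP$ if and only if for every $\eps>0$ and every pair $\bar x,\bar z\in\bar V$,
\begin{align*}
    \hat\mu(\bar z)\geq\hat\mu(\bar x)\ \lor\ \bar z\in\cconv_\Q\left\{\bar y\in\hat\mu(\bar x)\,\interior{B_{X_\mu}}\cap\bar V:\hat\mu(\bar x-\bar y)>2\hat\mu(\bar x)-\eps\right\}.
\end{align*}
The first step is to rewrite the membership in the rational closed convex hull by quantifying over rational convex combinations: $\bar z$ lies in that set if and only if for every $p\in\N$ there are $k\in\N$, $\lambda\in Q_k$ and $y_1,\dots,y_k\in V$ with $\mu(y_\ell)<\mu(x)$ for all $\ell$, with $\mu(x-y_\ell)>2\mu(x)-1/m$ for all $\ell$ (replacing $\eps$ by $1/m$, $m\in\N$), and with $\mu\left(z-\sum_{\ell=1}^k\lambda_\ell y_\ell\right)<1/p$. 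Here one must be slightly careful with the scaling: the elements of the generating set are required to have norm strictly less than $\hat\mu(\bar x)$, which is exactly the condition $\mu(y_\ell)<\mu(x)$, and this is an open condition on $\mu$ in the pointwise topology once $x,y_\ell\in V$ are fixed; similarly $\mu(x-y_\ell)>2\mu(x)-1/m$ and $\mu(z-\sum_\ell\lambda_\ell y_\ell)<1/p$ are open conditions, while $\mu(z)\geq\mu(x)$ is closed. (If $\mu(x)=0$ the generating set is empty, but then the alternative $\mu(z)\geq\mu(x)=0$ holds trivially, so no special case is needed.)

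Having done this, the second step is purely bookkeeping: introducing, for $w\in V$ and $t\in\R$, the closed sets $C^0_{w,t}=\{\nu\in\cI:\nu(w)\geq t\}$ and the open sets $C^1_{w,t}=\{\nu:\nu(w)<t\}$, $C^2_{w,t}=\{\nu:\nu(w)>t\}$ as in the proof of Theorem~\ref{LD2Pteo}, together with the obvious variant in which the threshold $t$ is itself replaced by a pointwise-continuous function of $\mu$ such as $\nu\mapsto\nu(x)$ or $\nu\mapsto 2\nu(x)-1/m$ (the corresponding sets $\{\nu:\nu(w)<\nu(x)\}$, $\{\nu:\nu(w)>2\nu(x)-1/m\}$ are still open since both sides depend continuously on $\nu$ in the product topology, $x,w\in V$ being fixed), one writes
\begin{align*}
    \DP=\bigcap_{m\in\N}\bigcap_{x,z\in V}\Biggl[\{\nu:\nu(z)\geq\nu(x)\}\cup\bigcap_{p\in\N}\bigcup_{k\in\N}\bigcup_{\lambda\in Q_k}\bigcup_{\vec y\in V^k}\Biggl(C^1_{z-\sum_\ell\lambda_\ell y_\ell,1/p}\cap\bigcap_{\ell=1}^k\bigl(\{\nu:\nu(y_\ell)<\nu(x)\}\cap\{\nu:\nu(x-y_\ell)>2\nu(x)-1/m\}\bigr)\Biggr)\Biggr],
\end{align*}
which is a countable intersection of $G_\delta$ sets, hence $G_\delta$. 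This establishes that $\DP$ is $G_\delta$ in $\cI$.

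For the lower bound, $\DP$ is not $F_\sigma$ in $\cI$ by Corollary~\ref{NoFsig1} (which is Proposition~\ref{NoFsig1}, listing $\DP$ explicitly among the non-$F_\sigma$ isometry classes), so by \cite[22.11]{kech} a $G_\delta$ set that is not $F_\sigma$ is automatically $G_\delta$-complete. This finishes the proof. I do not expect any serious obstacle: the only point requiring a moment's care is the presence of the factor $\hat\mu(\bar x)$ scaling the ball in Lemma~\ref{DPlema}, i.e.\ making sure that the conditions $\mu(y_\ell)<\mu(x)$ and $\mu(x-y_\ell)>2\mu(x)-1/m$ are genuinely open in $\cI$ and that the degenerate case $\mu(x)=0$ is harmless — both of which are immediate. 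Everything else is a transcription of the arguments already carried out in detail for Theorem~\ref{LD2Pteo} and Lemma~\ref{LD2Plema}.
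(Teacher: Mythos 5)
Your proposal is correct and follows essentially the same route as the paper, which itself proves Theorem~\ref{Daugteo} by invoking Lemma~\ref{DPlema} and transcribing the argument of Theorem~\ref{LD2Pteo}; your explicit handling of the scaling by $\hat\mu(\bar x)$ (openness of $\{\nu:\nu(y_\ell)<\nu(x)\}$ and $\{\nu:\nu(x-y_\ell)>2\nu(x)-1/m\}$ in the product topology, and the harmlessness of $\mu(x)=0$) together with the appeal to Proposition~\ref{NoFsig1} and \cite[22.11]{kech} is exactly what the paper leaves to the reader.
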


\begin{remark}
    Observe that in the proof of the previous theorem we don't use the Gurari\u{\i} space in any way to prove that $\DP$ is $G_\delta$ in $\cI$. Furthermore, by using \cite[Proposition 2.9]{cddk1} and the fact that $\langle C[0,1] \rangle_{\equiv}^{\cI}$ is contained in $\DP$ we easily obtain that it is dense in $\cI$. Then, by Remark \ref{propgurari}, we obtain an alternative proof to \cite[Corollary 4.5]{aln14} that the Gurari\u{\i} space has the Daugavet property.
\end{remark}

Lastly, it is also possible to give a geometric characterization of the DLD2P (see e.g. \cite[Proposition 2.1.7]{pirk}):

\begin{lemma}\label{DLD2Plema}
    Let $X$ be a Banach space. The following are equivalent:
    \begin{enumerate}
        \item $X$ has the $DLD2P$.
        \item For every $\eps >0$ and $x \in S_X$
        $$x \in \cconv\{y \in B_X: \norm{x-y} > 2-\eps\}.$$
    \end{enumerate}
\end{lemma}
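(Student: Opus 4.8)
The plan is to prove the equivalence by reducing the diametral local diameter 2 property to the containment condition, mirroring the structure of Lemma \ref{LD2Plema} but being careful about the role of the point $x \in S_X$ that is now fixed inside the convex hull rather than only on the left-hand side of an inclusion.

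First I would show the implication $(1) \Rightarrow (2)$. Assume $X$ has the DLD2P and fix $\eps > 0$ and $x \in S_X$. The natural approach is to use the well-known reformulation of the DLD2P (see \cite[Proposition 2.1.7]{pirk} and the surrounding literature, or the characterisation already recalled for the DP in \eqref{werner1}) stating that the DLD2P holds if and only if for every slice $S$ of $B_X$ and every $\eps>0$, every point of $S \cap S_X$ can be almost-attained in diameter inside $S$. Equivalently, one runs a Hahn--Banach separation argument: if $x \notin \cconv\{y \in B_X : \norm{x-y} > 2 - \eps\}$, then there is $f \in S_{X^*}$ and $\alpha > 0$ with $f(x) > \sup\{f(y) : y \in B_X, \norm{x-y} > 2-\eps\} + \alpha$; in particular $f(x) > 1 - \alpha'$ for a suitable $\alpha'$, so $x$ lies in a slice $S(B_X, f, \alpha')$ with $x \in S \cap S_X$, and by the DLD2P there is $y \in S$ with $\norm{x - y} > 2 - \eps$, contradicting the separation. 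Hence $x$ belongs to the closed convex hull, proving $(2)$.

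Next I would prove $(2) \Rightarrow (1)$. Fix $\eps > 0$, a slice $S = S(B_X, f, \delta)$ of $B_X$ and a point $x \in S \cap S_X$; I must produce $y \in S$ with $\norm{x - y} > 2 - \eps$. By $(2)$ we have $x \in \cconv\{y \in B_X : \norm{x-y} > 2 - \eps/2\}$, so there is a convex combination $\sum_{k} \lambda_k y_k$ with $\norm{x - y_k} > 2 - \eps/2$ and $\norm{x - \sum_k \lambda_k y_k}$ as small as we wish, in particular small enough that $f(\sum_k \lambda_k y_k) > 1 - \delta$ (using $f(x) = 1 > 1 - \delta$ and continuity of $f$). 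Since $\sum_k \lambda_k f(y_k) = f(\sum_k \lambda_k y_k) > 1 - \delta$ and all $f(y_k) \leq 1$, at least one index $k_0$ satisfies $f(y_{k_0}) > 1 - \delta$, i.e.\ $y_{k_0} \in S$; and since we only perturbed slightly, $\norm{x - y_{k_0}} > 2 - \eps$ (choosing the initial perturbation parameter small enough relative to $\eps$ from the start, so that $2 - \eps/2$ minus the perturbation still exceeds $2 - \eps$). This gives the required $y$ and finishes the proof.

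The main obstacle I anticipate is purely bookkeeping: in the second implication one must choose the tolerance in the convex-combination approximation small enough simultaneously to (a) keep one of the $y_k$ inside the slice $S$ and (b) preserve the lower bound $\norm{x - y_{k_0}} > 2 - \eps$ after passing from the average back to a single summand. A clean way to handle this is to first replace $\eps$ by $\eps/2$ in the hypothesis, then pick the approximation error $\eta$ with $\eta < \min\{\delta, \eps/2\}$, and use the triangle inequality $\norm{x - y_{k_0}} \geq \norm{x - y_{k_0}}$ directly — here no perturbation of $y_{k_0}$ is actually needed since each $y_k$ already satisfies the strong lower bound; only the slice-membership step uses the smallness of $\eta$. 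Once this is arranged, both directions are routine, and the statement follows exactly as in Lemma \ref{LD2Plema} and Lemma \ref{DPlema}.
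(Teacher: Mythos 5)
Your argument is correct. The paper does not actually prove this lemma --- it simply cites \cite[Proposition 2.1.7]{pirk} --- and your Hahn--Banach separation argument for $(1)\Rightarrow(2)$ together with the ``pick the good summand of the convex combination'' argument for $(2)\Rightarrow(1)$ is exactly the standard proof that the reference contains, so you have filled the omitted details correctly. Two small points of hygiene: in the forward direction you should make explicit that the separating slice is $S(B_X,f,1-s)$ with $s=\sup\{f(y):y\in B_X,\ \norm{x-y}>2-\eps\}$, so that the slice is genuinely disjoint from the set being separated (this is what produces the contradiction); and in the reverse direction the assertion $f(x)=1$ is unjustified (a point of $S\cap S_X$ need not norm $f$), but it is also unnecessary --- $f(x)>1-\delta$ is all you use, and then choosing $\eta<f(x)-(1-\delta)$ closes the argument. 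As you observe, no shrinking of $\eps$ is needed since the chosen $y_{k_0}$ itself already satisfies $\norm{x-y_{k_0}}>2-\eps$.
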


One more time, by the same techniques used in the proof of Lemma \ref{LD2Plema}, from the previous lemma we obtain that if $D$ is a dense set in $X$, then $X$ has de DLD2P iff
\begin{align}
    \forall \eps>0\,\forall x \in D\, x \in \cconv_{\Q} \left\{y \in \norm{x}\interior{B_X} \cap D: \norm{x-y} > 2\norm{x} - \eps\right\},
\end{align}
and using the same arguments given in the proof of Theorem \ref{LD2Pteo} we obtain the desired result:

\begin{theorem}\label{DLD2Pteo}
    The isometry class $\DLDP$ is $G_\delta$-complete in $\cI$.
\end{theorem}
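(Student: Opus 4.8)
The plan is to mirror, almost verbatim, the strategy already deployed for the LD$\delta$P, the SD2P and the Daugavet property, since Lemma~\ref{DLD2Plema} (and the density reformulation immediately following it) has reduced the DLD2P to a statement involving only the norm of the space. First I would record the explicit reformulation: $\mu\in\DLDP$ if and only if for every $m\in\N$ and every $u\in V$,
\begin{align*}
& \mu(u)=0 \ \lor\ \forall p\in\N\ \exists n\in\N\ \exists\lambda\in Q_n\ \exists\Vec{y}\in V^n\\
&\quad\Bigl(\mu\Bigl(u-\sum_{i=1}^n\lambda_i y_i\Bigr)<\tfrac1p\ \land\ \forall i\in\{1,\dots,n\}\ \bigl[\mu(y_i)<\mu(u)\ \land\ \mu(u-y_i)>2\mu(u)-\tfrac1m\bigr]\Bigr),
\end{align*}
where $Q_n$ is the countable set of strictly positive rational convex weights of length $n$ introduced in the proof of Theorem~\ref{LD2Pteo}. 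The only subtlety compared with the LD$\delta$P is that here the ``radius'' $\norm{x}$ is itself $\mu(u)$ rather than the constant $1$, so the open/closed sets one uses must be of the form $\{\nu:\nu(u-y_i)>2\nu(u)-1/m\}$, $\{\nu:\nu(y_i)<\nu(u)\}$, etc.; all of these are still open (resp.\ closed) because $\nu\mapsto\nu(w)$ is continuous on $\cI$ in the pointwise topology and finite linear combinations of such evaluations are continuous, so strict inequalities between them define open sets. The case $\mu(u)=0$ only occurs in $\cPi$ (where one passes to the quotient); it is handled by the closed condition $\{\nu:\nu(u)\le 0\}$, which equals $\{\nu:\nu(u)=0\}$ since seminorms are nonnegative.

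Next I would assemble the formula into a Borel expression. Writing, for $w\in V$ and $t\in\R$, the closed set $C^0_{w,t}=\{\nu\in\cI:\nu(w)\ge t\}$ and the open sets $C^1_{w,t}=\{\nu:\nu(w)<t\}$ and $C^2_{w,t}=\{\nu:\nu(w)>t\}$, together with the analogous sets $D^1_{w,w'}=\{\nu:\nu(w)<\nu(w')\}$ and $D^2_{w,w',t}=\{\nu:\nu(w)>2\nu(w')-t\}$ (both open), one gets
\begin{align*}
\DLDP=\bigcap_{m\in\N}\bigcap_{u\in V}\Biggl[\,C^0_{u,0}{}^{c}{}^{c}\!\!\!\!\!\!\! &&\text{\small(i.e. $\{\nu:\nu(u)=0\}$)}\ \cup\ \bigcap_{p\in\N}\bigcup_{n\in\N}\bigcup_{\lambda\in Q_n}\bigcup_{\Vec{y}\in V^n}\Bigl(C^1_{u-\sum_i\lambda_i y_i,\,1/p}\,\cap\\
&&\bigcap_{i=1}^n\bigl(D^1_{y_i,u}\cap D^2_{u-y_i,u,1/m}\bigr)\Bigr)\Biggr],
\end{align*}
which exhibits $\DLDP$ as a countable intersection of sets each of which is a countable intersection of countable unions of open sets intersected with a closed set; hence $\DLDP$ is $G_\delta$ in $\cI$. (The outermost closed set $\{\nu:\nu(u)=0\}$ causes no problem: a closed set union a $G_\delta$ set is $G_\delta$, so each bracket is $G_\delta$, and a countable intersection of $G_\delta$ sets is $G_\delta$.) I should double check that the density reformulation genuinely lets me restrict $u,y_i$ to the countable universal set $\bar V$ (resp.\ $V$ for $\cB$) exactly as in Lemma~\ref{LD2Plema}; this is the only place where I genuinely need the density of $\bar V\cap\interior{B_{X_\mu}}$ in $B_{X_\mu}$, which was already noted in Section~2.

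Finally, to upgrade from ``$G_\delta$'' to ``$G_\delta$-complete'' I invoke Proposition~\ref{NoFsig1}, which states precisely that $\DLDP$ is not $F_\sigma$ in $\cI$; combined with the general dichotomy \cite[22.11]{kech}, a $G_\delta$ set that is not $F_\sigma$ is automatically $G_\delta$-complete. This closes the argument. I expect the only real point requiring care to be the bookkeeping in the case $\mu(u)=0$ and, relatedly, making sure the sets $D^1_{y_i,u}$ and $D^2_{u-y_i,u,1/m}$ are genuinely open (not merely Borel) so that the whole expression stays at the $G_\delta$ level rather than slipping to $F_{\sigma\delta}$; everything else is a routine transcription of the proof of Theorem~\ref{LD2Pteo} with the constant $1$ replaced by $\mu(u)$ and with $\delta=2$.
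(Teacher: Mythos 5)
Your proposal is correct and follows essentially the same route as the paper: the paper likewise passes from Lemma~\ref{DLD2Plema} to the scaled density reformulation over $\bar V$ with rational convex combinations, transcribes it as a countable intersection/union of the open and closed evaluation sets to get $G_\delta$, and then combines Proposition~\ref{NoFsig1} with \cite[22.11]{kech} for completeness. Your explicit handling of the $\mu(u)=0$ case and the openness of the sets $\{\nu:\nu(y_i)<\nu(u)\}$ and $\{\nu:\nu(u-y_i)>2\nu(u)-1/m\}$ is exactly the bookkeeping the paper leaves implicit.
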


\section{Complexity of octahedral spaces}\label{section:compleocta}

In this last section we will study the complexity of the class of the (locally, weakly) octahedral spaces. For LOH and OH spaces we can give a direct geometrical description of this spaces and use the same approach that in Section \ref{section:complepinfinito} to obtain their optimal complexity class. However, as happened with the D2P and the DD2P, for WOH spaces the presence of the dual space seems to be unavoidable.

It turns out (see \cite[Proposition 2.4]{hlp}) that a Banach space is WOH iff for any $\eps>0$, $n \in \N$, $x_1, \cdots, x_n \in S_X$ and $f \in B_{X^*}$, there is a $y \in S_X$ such that for any $t>0$ and $i \in \{1, \cdots, n\}$ we have that
\begin{align}\label{WOHchar}
    \norm{x_i+ty} \geq (1-\eps)\left( \abs{f(x_i)}+t \right),
\end{align}
and it is an open question if the condition $t>0$ can be replaced with $t=1$. If that was the case, it would be possible to use the approach of Section \ref{section:compleB} and prove that the class $\WOH$ is $G_\delta$-complete in $\cB$, but without that assumption this approach does not work because (\ref{WOHchar}) gives a $\Fsd$ complexity, not a $G_\delta$ one which is necessary for that approach to work. For that reason, we will need a different approach for the WOH spaces, mainly seeing them as the dual class of the D2P property, but we will postpone this until the end of the section.

As said, we can give a purely geometrical characterization of LOH and OH spaces (see \cite[Propositions 2.1 (iii), 3.1 (iii)]{hlp}) and using yet again the same arguments from the previous sections (see Lemma \ref{LD2Plema}) we obtain the following lemma:

\begin{lemma}
    Let $X$ be a Banach space and $D \subset X$ a dense subset of $X$. The following assertions are satisfied:
    \begin{enumerate}
        \item $X$ is LOH iff
        \begin{align}
            & \forall \eps>0\, \forall x \in D\, \exists y \in D \bigl[\norm{y} \not = 0 \land \norm{\norm{y}x - \norm{x} y} > \norm{y} (2\norm{x}-\eps)\, \land\\
            & \norm{\norm{y}x + \norm{x}y} > \norm{y} (2\norm{x}-\eps)\bigr]\nonumber
        \end{align}
        \item $X$ is OH iff
        \begin{align}
            & \forall \eps>0\, \forall n \in \N\, \forall x \in D^n\, \exists y \in D \bigl[\norm{y} \not = 0 \land \forall i \in \{1, \cdots, n\} \\
            &  \norm{\norm{y}x_i + \norm{x_i}y} > \norm{y} (2\norm{x_i}-\eps)\bigr]\nonumber
        \end{align}
    \end{enumerate}
\end{lemma}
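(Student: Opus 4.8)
The plan is to deduce both equivalences from the purely geometric descriptions of locally octahedral and octahedral norms recorded in \cite[Propositions 2.1 (iii), 3.1 (iii)]{hlp}, and then to run a density argument that mirrors, almost verbatim, the one in the proof of Lemma \ref{LD2Plema}. The facts from \cite{hlp} I would take as starting point are: $X$ is LOH if and only if for every $\eps>0$ and every $x\in S_X$ there is $y\in S_X$ with $\norm{x-y}>2-\eps$ and $\norm{x+y}>2-\eps$; and $X$ is OH if and only if for every $\eps>0$, every $n\in\N$ and all $x_1,\cdots,x_n\in S_X$ there is a single $y\in S_X$ with $\norm{x_i+y}>2-\eps$ for every $i$.

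First I would homogenise these statements so that $x$ (resp.\ each $x_i$) and the witness $y$ may range over all nonzero vectors of $X$ rather than over the unit sphere. In the LOH case, given $x\in X\setminus\{0\}$ one applies the characterisation to $x/\norm{x}\in S_X$ with parameter $\eps/\norm{x}$ and multiplies the two resulting inequalities by $\norm{x}$, obtaining $\norm{\norm{y}x\pm\norm{x}y}>\norm{y}(2\norm{x}-\eps)$ for the (unit) witness $y$; the converse implication is the same computation read backwards, dividing through by $\norm{y}$ and normalising. The vector $x=0$ leaves both sides of the claimed equivalence vacuous — any nonzero $y$ works, and one exists as long as $X\neq\{0\}$, the zero space being trivially consistent with the statement — so it needs no separate treatment. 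For OH the homogenisation is identical, discarding the trivial indices with $x_i=0$ and rescaling the tolerance by $1/\max_i\norm{x_i}$ over the nonzero $x_i$ so that a single $y$ witnesses all the coordinates at once.

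Next I would pass from ``for all $x\in X\setminus\{0\}$ there is $y\in X\setminus\{0\}$'' to ``for all $x\in D$ there is $y\in D$''. The forward implication is immediate except for having to pull the witness into $D$: the inequalities in question are strict and the maps $y\mapsto\norm{\norm{y}x\pm\norm{x}y}-\norm{y}(2\norm{x}-\eps)$ are continuous, so any $y\in D$ sufficiently close to a witness in $X\setminus\{0\}$ — hence automatically nonzero — still satisfies them once $\eps$ is relaxed a little, exactly as convex combinations were replaced by rational ones in Lemma \ref{LD2Plema}. For the converse, given arbitrary $x\in X\setminus\{0\}$ I would choose $x'\in D\setminus\{0\}$ with $\norm{x-x'}<\eps/8$, apply the $D$-condition to $x'$ with tolerance $\eps/2$ to produce $y\in D$, and then use the elementary estimate $\bigl|\,\norm{\norm{y}x\pm\norm{x}y}-\norm{\norm{y}x'\pm\norm{x'}y}\,\bigr|\leq 2\norm{y}\norm{x-x'}$, together with $|\,\norm{x}-\norm{x'}\,|\leq\norm{x-x'}$, to transfer the strict inequalities from $x'$ back to $x$. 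The OH case is word for word the same, with $x$ replaced by an $n$-tuple and the bound $\norm{x_i-x_i'}<\eps/8$ imposed coordinatewise.

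The step I expect to demand some actual care — precisely the one hidden behind ``the same techniques used in the proof of Lemma \ref{LD2Plema}'' — is the scale matching in this last density argument: a priori the precision needed on the approximant $x'$ could depend on the unknown witness $y$, but every error term above carries the common factor $\norm{y}$, which cancels against the same factor sitting on both sides of the strict inequality, so the tolerance $\eps/8$ is genuinely uniform in $y$. Everything else is a routine transcription of the argument for Lemma \ref{LD2Plema}.
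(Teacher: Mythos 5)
Your proposal is correct and follows exactly the route the paper intends: it starts from the sphere characterisations of LOH and OH in \cite[Propositions 2.1 (iii), 3.1 (iii)]{hlp}, homogenises them, and then runs the same density argument as in Lemma \ref{LD2Plema} (the paper itself gives no further detail, simply citing these two ingredients). Your scale-matching observation --- that every error term carries the factor $\norm{y}$, which cancels on both sides so the required precision on the approximant is uniform in the witness --- is precisely the point that makes the transcription work, and your $\eps/8$ bookkeeping checks out.
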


From this lemma, using an analogous approach to the proof of Theorem \ref{LD2Pteo}, we obtain the desired theorem:

\begin{theorem}\label{OHyLOHteo}
    The isometry classes $\LOH$ and $\OH$ are $G_\delta$-complete in $\cI$.
\end{theorem}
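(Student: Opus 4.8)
The plan is to follow the exact template established in the proof of Theorem \ref{LD2Pteo}, now using the geometric characterizations of LOH and OH given in the preceding lemma. First I would show that $\LOH$ and $\OH$ are $G_\delta$ in $\cI$, and then invoke Proposition \ref{NoFsig1} together with \cite[22.11]{kech} to upgrade this to $G_\delta$-completeness, exactly as in the previous theorems.

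For the $G_\delta$ upper bound I would work with the dense set $\bar V \subset X_\mu$ (or directly $V$ in the case of $\cB$) and rewrite the conditions in the lemma using only rational parameters. For LOH, since $\mu \in \LOH$ iff for every $\eps>0$ and every $u \in V$ there is a $v \in V$ with $\mu(v) \neq 0$, $\mu(\mu(v)u - \mu(u)v) > \mu(v)(2\mu(u)-\eps)$ and $\mu(\mu(v)u + \mu(u)v) > \mu(v)(2\mu(u)-\eps)$, I replace $\forall \eps>0$ by $\forall m \in \N$ with $\eps = 1/m$. Writing $C^0_{w,t}$, $C^1_{w,t}$, $C^2_{w,t}$ for the closed and open sets $\{\nu : \nu(w) \geq t\}$, $\{\nu : \nu(w) < t\}$, $\{\nu : \nu(w) > t\}$ as in Theorem \ref{LD2Pteo}, I can express
\begin{align}
    \LOH = \bigcap_{m \in \N} \bigcap_{u \in V} \bigcup_{v \in V} \Bigl( C^2_{v,0} \cap \bigl\{\nu : \nu(\nu(v)u - \nu(u)v) > \nu(v)(2\nu(u)-1/m)\bigr\} \cap \bigl\{\nu : \nu(\nu(v)u + \nu(u)v) > \nu(v)(2\nu(u)-1/m)\bigr\} \Bigr).
\end{align}
The two bracketed sets are open: the map $\nu \mapsto \nu(\nu(v)u - \nu(u)v) - \nu(v)(2\nu(u)-1/m)$ is continuous because evaluation of a seminorm at a fixed element is continuous in the pointwise topology and because the argument $\nu(v)u - \nu(u)v$ depends continuously on $\nu$ (being a finite rational combination whose coefficients are continuous functions of $\nu$), so their preimages of $(0,\infty)$ are open. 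Hence the inner union is open, the countable intersection is $G_\delta$, and $\LOH$ is $G_\delta$ in $\cI$. For OH one proceeds identically, with an extra countable union over $n \in \N$ and over $x \in V^n$:
\begin{align}
    \OH = \bigcap_{m \in \N} \bigcap_{n \in \N} \bigcap_{\vec x \in V^n} \bigcup_{v \in V} \Bigl( C^2_{v,0} \cap \bigcap_{i=1}^n \bigl\{\nu : \nu(\nu(v)x_i + \nu(x_i)v) > \nu(v)(2\nu(x_i)-1/m)\bigr\} \Bigr),
\end{align}
which is again a countable intersection of open sets.

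Finally, since both classes are closed under isometries, contain the isometry class of the Gurari\u{\i} space (the Gurari\u{\i} space has the Daugavet property, hence is OH and LOH, by the chain of implications among these properties), and have $\UC_\infty$ in their complement (uniformly convex spaces are not octahedral, and the octahedral properties pass to smaller norms appropriately so the complement is large), Proposition \ref{NoFsig1} applies and tells us that neither class is $F_\sigma$ in $\cI$; combined with the $G_\delta$ bound just established and \cite[22.11]{kech}, this yields that $\LOH$ and $\OH$ are $G_\delta$-complete in $\cI$, as claimed. The only mildly delicate point is the continuity of the auxiliary maps $\nu \mapsto \nu(\nu(v)u \pm \nu(u)v)$, since the element at which $\nu$ is evaluated itself varies with $\nu$; but this is handled exactly as the continuity of the maps $\mu \mapsto \mu(u - \sum \lambda_i \frac{x_i+y_i}{2})$ appearing implicitly in the proof of Theorem \ref{LD2Pteo}, so no genuinely new obstacle arises.
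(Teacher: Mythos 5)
Your proposal is correct and follows essentially the same route as the paper: the identical $G_\delta$ decomposition derived from the geometric characterization of LOH and OH, with the lower bound obtained from Proposition \ref{NoFsig1} and \cite[22.11]{kech}. The only point the paper makes explicit that you gloss slightly is that $\nu(v)u \pm \nu(u)v$ has real (not rational) coefficients, so it lies in $c_{00}$ rather than in $V$ and one must evaluate the extension $\bar{\nu}$; the openness of those sets is then verified using the density of $\Q$ in $\R$ together with the pointwise convergence on $\cI$, which is precisely the continuity argument you sketch at the end.
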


\begin{remark}
    The only difference in this proof from that of Theorem \ref{LD2Pteo} is that here we will get sets of the form
    $$\left\{\mu \in \cI: \bar{\mu}(\mu(v) u \pm \mu(u) v) > \mu(v) \left(2\mu(u) - \dfrac{1}{m}\right)\right\}$$
    where it appears the extension $\bar{\mu}$ of $\mu$ to $c_{00}$, but it is easy to proof using the density of $\Q$ in $\R$ and the pointwise convergence of $\cI$ that their complements are closed, so they are open as wanted. 
\end{remark}

Finally, as we said at the beggining of the section, to get a bound for the complexity of the class of weakly octahedral spaces we will need the following dual characterization given in \cite[Theorem 2.6]{hlp}:

\begin{theorem}\label{CardualWOH}
    Let $X$ be a Banach space. The following are equivalent:
    \begin{enumerate}
        \item $X$ is weakly octahedral.
        \item $X^*$ has the $w^*$-D2P, that is, every non-empty relatively weak* open subset of $B_{X^*}$ has diameter $2$.
    \end{enumerate}
\end{theorem}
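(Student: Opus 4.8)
The statement to prove is Theorem \ref{CardualWOH}, the duality between weak octahedrality of $X$ and the $w^*$-D2P of $X^*$.

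\medskip

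The plan is to prove both implications via direct computations with slices and weak* open sets, exploiting the fact that the $w^*$-D2P of $X^*$ is equivalent to the condition that $B_{X^*}$ has no $w^*$-slices of small diameter, \emph{and} that by Bourgain-type arguments one can replace relatively $w^*$-open sets by $w^*$-slices without loss of generality. So the real work is to relate $w^*$-slices of $B_{X^*}$ (which are of the form $S(B_{X^*},x,\alpha)$ for $x\in S_X$) to the weak octahedrality inequality.

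\medskip

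For the implication (1)$\Rightarrow$(2), I would argue by contraposition: assume $B_{X^*}$ has a nonempty relatively $w^*$-open subset $W$ of diameter strictly less than $2$; by the Bourgain lemma reduction it suffices to assume $W$ contains a $w^*$-slice $S(B_{X^*},x,\alpha)$ of diameter at most $2-2\eta<2$ for some $x\in S_X$ and $\eta>0$. The geometric meaning of ``$\diam S(B_{X^*},x,\alpha)\le 2-2\eta$'' is that whenever $f,g\in B_{X^*}$ both satisfy $f(x)>1-\alpha$ and $g(x)>1-\alpha$, then $\norm{f-g}\le 2-2\eta$; in particular, taking $g$ close to an extreme point pairing negatively against a test vector, one extracts that no $f$ in the slice can act like $-1$ on any $y\in S_X$ while another element of the slice acts like $+1$. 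Concretely: for the witness $y\in S_X$ supplied by weak octahedrality applied to the finite set $\{x\}$ (together with any $f\in S_{X^*}$ with $f(x)>1-\alpha$), the inequality $\norm{x+ty}\ge(1-\eps)(|f(x)|+t)$, letting $t\to\infty$, forces the existence of functionals in $B_{X^*}$ acting like $+1$ and like $-1$ on $y$ while staying inside a fixed slice determined by $x$; that produces two functionals in $S(B_{X^*},x,\alpha)$ at mutual distance $>2-\eps$, contradicting the diameter bound once $\eps$ is chosen small. This is essentially the argument in \cite[Theorem 2.6]{hlp}; the delicate point is choosing the order of quantifiers so that $y$ is produced \emph{after} the slice is fixed.

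\medskip

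For (2)$\Rightarrow$(1), conversely, fix a finite-dimensional subspace $E\subset X$, $f\in B_{X^*}$ and $\eps>0$; we must produce $y\in S_X$ with $\norm{x+y}\ge(1-\eps)(|f(x)|+\norm{y})$ for all $x\in E$ (using the characterization \eqref{WOHchar} with the $t>0$ scaling, it suffices to handle a finite $\eps$-net $x_1,\dots,x_n$ of $S_E$ and $t=1$ up to a routine rescaling/perturbation). The $w^*$-D2P of $X^*$ gives, for the relatively $w^*$-open set $W=\{g\in B_{X^*}: g(x_i)>f(x_i)-\delta,\ i=1,\dots,n\}$ (nonempty since $f\in W$), two points $g_1,g_2\in W$ with $\norm{g_1-g_2}>2-\eps'$; then $h=(g_1-g_2)/\norm{g_1-g_2}$ is a norm-one functional nearly attaining its norm, and we pick $y\in S_X$ with $h(y)>1-\eps''$. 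For this $y$: $g_1(y)$ is close to $+1$ and $g_2(y)$ close to $-1$, while both $g_1(x_i)$ and $g_2(x_i)$ are close to $f(x_i)$; hence $g_1(x_i+y)\ge f(x_i)-\delta + 1-\eps''$ if $f(x_i)\ge 0$, and otherwise one uses $-g_2$ similarly, in either case bounding $\norm{x_i+y}\ge g_j(x_i+y)$ below by $(1-\eps)(|f(x_i)|+1)$ after cleaning up the error terms. Extending from the net to all of $E$ is a standard triangle-inequality step.

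\medskip

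The main obstacle I expect is bookkeeping the epsilons and the quantifier order — specifically, in (1)$\Rightarrow$(2) the witness $y$ from weak octahedrality must be chosen depending on the slice, and one must be careful that the functional attaining a value near $-1$ on $y$ can be forced to lie in the prescribed slice $S(B_{X^*},x,\alpha)$; this is exactly where the finite set in the definition of WOH (containing both $x$ and a vector detecting $y$) and the Bourgain reduction to slices are both needed. Since this theorem is quoted from \cite[Theorem 2.6]{hlp} and is used in the paper only as an external input, I would in fact simply cite it rather than reproving it; the sketch above indicates how the proof goes.
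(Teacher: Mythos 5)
The paper does not prove this theorem at all: it is quoted verbatim as an external input from \cite[Theorem 2.6]{hlp}, so your closing decision to simply cite it is exactly what the authors do. Since you nevertheless offered a sketch, the following gap in your (1)$\Rightarrow$(2) argument should be flagged. You claim that by a ``Bourgain lemma reduction'' one may assume the relatively $w^*$-open set $W\subset B_{X^*}$ of diameter less than $2$ contains a $w^*$-slice of diameter less than $2$. Bourgain's lemma produces \emph{convex combinations} of slices inside a relatively open set, not a single slice, and in general a nonempty relatively $w^*$-open subset of $B_{X^*}$ need not contain any $w^*$-slice. If your reduction were valid, the $w^*$-LD2P and the $w^*$-D2P of $B_{X^*}$ would coincide, i.e.\ (by the very dualities at stake in this theorem) LOH would imply WOH --- precisely the collapse these notions were introduced to avoid. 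As written, your argument only shows that WOH implies the $w^*$-LD2P of the dual, which is the statement dual to LOH, not to WOH. A second problem: ``letting $t\to\infty$'' in $\norm{x+ty}\geq(1-\eps)(\abs{f(x)}+t)$ yields only $\norm{y}\geq 1-\eps$ and does not force the existence of functionals acting like $\pm 1$ on $y$. The mechanism that actually produces the two distant functionals is explicit and must be present: writing $W\supseteq\{g\in B_{X^*}:\abs{g(x_i)-f(x_i)}<\delta,\ i=1,\dots,n\}$, apply WOH to $E=\spann\{x_1,\dots,x_n\}$ and $f$ to obtain $y$, observe by homogeneity that $\abs{f(x)\pm t}\leq(1-\eps)^{-1}\norm{x+ty}$ for all $x\in E$ and $t\in\R$, so that $x+ty\mapsto f(x)\pm t$ define bounded functionals on $\spann(E\cup\{y\})$, extend both by Hahn--Banach and rescale by $1-\eps$; the two resulting elements of $W$ differ by almost $2$ when evaluated at $y$.

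Your direction (2)$\Rightarrow$(1) is essentially sound, including the observation that the inequality is automatic for $x\in E$ of large norm so that a net of $S_E$ suffices, but the displayed set $W$ must carry the two-sided constraint $\abs{g(x_i)-f(x_i)}<\delta$: with only the lower bound $g(x_i)>f(x_i)-\delta$ you cannot bound $-g_2(x_i)$ from below in the case $f(x_i)<0$, which is exactly the case where you switch to $-g_2$.
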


Recall that given a Banach space $X$, $F$ a $w^*$-compact subset of $X^*$ and $\eps>0$, the \textit{Szlenk derivative} of $F$ is given by
\begin{align}
    F_{\eps}' = \left\{f \in F: \forall U \left[U \text{ is } w^*\text{-open} \land f \in U \Rightarrow \diam(U \cap F) \geq \eps\right]\right\}.
\end{align}
Observe that $F_{\eps}'$ is $w^*$-closed and is contained in $F$ which is $w^*$-compact, so it is $w^*$-compact. 

The reason to introduce this concept here is that it is immediate that the dual of a Banach space $X$ has the $w^*$-D2P iff
\begin{align}\label{equivszlenk}
    \left( B_{X^*} \right)_2' = B_{X^*},
\end{align}
so using Theorem \ref{CardualWOH} we can study the class of WOH spaces by studying the Szlenk derivative of its dual ball.

In order to clarify this relation in a way which will allow us to study its descriptive complexity we will need the following easy lemma:

\begin{lemma}\label{lemaSzlenk}
    Let $X$ be a Banach space, $Y$ a dual space and $Z \subset X^*$, $W \subset Y$. If there exists map $T:Z \ra W$ which is an isometry and a \\$w^*$-$w^*$-homeomorphism, then for every $\eps >0$ and $A \subset Z$ $w^*$-compact we have that
    \begin{align}
        T(A_{\eps}') = \left( T(A) \right)_{\eps}'.
    \end{align}
\end{lemma}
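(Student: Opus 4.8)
The plan is to unwind the definition of the Szlenk derivative and push it through the homeomorphism $T$ in both directions. Recall that for $f\in A$ we have $f\in A_\eps'$ precisely when every $w^*$-open neighbourhood $U$ of $f$ satisfies $\diam(U\cap A)\geq\eps$; equivalently, there exists a $w^*$-open $U$ with $f\in U$ and $\diam(U\cap A)<\eps$ exactly when $f\notin A_\eps'$. So I would fix $\eps>0$ and a $w^*$-compact $A\subset Z$, note that $T(A)\subset W\subset Y$ is $w^*$-compact (since $T$ is a $w^*$-$w^*$-homeomorphism on $A$ and $A$ is $w^*$-compact), and prove the two inclusions $T(A_\eps')\subset (T(A))_\eps'$ and $(T(A))_\eps'\subset T(A_\eps')$ separately.

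For the inclusion $T(A_\eps')\subset (T(A))_\eps'$, take $g\in T(A_\eps')$, say $g=T(f)$ with $f\in A_\eps'$, and let $\widetilde U$ be any $w^*$-open subset of $Y$ containing $g$. Then $U:=T^{-1}(\widetilde U\cap W)$ is a relatively $w^*$-open subset of $Z$ containing $f$ (using that $T$ is a $w^*$-$w^*$-homeomorphism from $Z$ onto $W$); extend it to a genuinely $w^*$-open subset of $X^*$ if one wants to match the definition verbatim, but this is cosmetic. Since $f\in A_\eps'$ we get $\diam(U\cap A)\geq\eps$, and because $T$ is an isometry and a bijection $Z\to W$ we have $T(U\cap A)=\widetilde U\cap T(A)$ with the same diameter, so $\diam(\widetilde U\cap T(A))\geq\eps$. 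As $\widetilde U$ was arbitrary, $g\in(T(A))_\eps'$. The reverse inclusion is the mirror image: apply the same argument to the inverse map $T^{-1}:W\to Z$, which is again an isometry and a $w^*$-$w^*$-homeomorphism, and to the $w^*$-compact set $T(A)\subset W$; this yields $T^{-1}((T(A))_\eps')\subset A_\eps'$, and applying $T$ and using $T\circ T^{-1}=\mathrm{id}$ on $W\supset(T(A))_\eps'$ gives $(T(A))_\eps'\subset T(A_\eps')$.

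The only genuinely delicate point is the bookkeeping around \emph{relative} versus \emph{ambient} $w^*$-open sets: the Szlenk derivative is defined using $w^*$-open subsets $U$ of the whole dual space, whereas $T$ is only a homeomorphism between the subspaces $Z$ and $W$. The resolution is that for the purpose of computing $\diam(U\cap A)$ with $A\subset Z$, only the trace $U\cap Z$ matters, and every relatively $w^*$-open subset of $Z$ is of the form $U\cap Z$ for some ambient $w^*$-open $U$; the same holds on the $Y$ side with $W$. So one works entirely with traces and the homeomorphism property of $T$ transfers them faithfully. Everything else — that diameters are preserved because $T$ is an isometry, and that images and preimages of intersections behave well because $T$ is a bijection on the relevant sets — is routine, so I would state the trace observation explicitly as the one substantive step and let the rest follow by symmetry.
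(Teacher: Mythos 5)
Your proof is correct; the paper in fact states this lemma without proof (calling it an ``easy lemma''), and your argument --- transferring relatively $w^*$-open traces through $T$, using the isometry to preserve diameters, and getting the reverse inclusion by applying the same reasoning to $T^{-1}$ --- is exactly the routine verification the authors intended. Your explicit remark that only the traces $U\cap Z$ and $\widetilde U\cap W$ matter when intersecting with $A\subset Z$ and $T(A)\subset W$ correctly disposes of the only subtlety (ambient versus relative $w^*$-open sets).
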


Now we are ready to prove the main theorem of this section:

\begin{theorem}\label{WOHteo}
    The isometry class $\WOH$ is $\Fsd$ in $\cPi$ and $\cB$.
\end{theorem}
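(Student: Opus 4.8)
The plan is to combine the dual characterisation of weak octahedrality (Theorem~\ref{CardualWOH}) with the access to the dual ball provided by the maps $T_\mu$ and the sets $K_\mu$ from Section~\ref{section:compleB}, together with the iterated Szlenk-type derivative machinery of Lemma~\ref{lemaSzlenk}. By Theorem~\ref{CardualWOH} and \eqref{equivszlenk}, a space $X_\mu$ is WOH exactly when $(B_{X_\mu^*})_2' = B_{X_\mu^*}$. Since $T_\mu$ is an isometric $w^*$-$w^*$-homeomorphism from $B_{X_\mu^*}$ onto $K_\mu$ (where $[-1,1]^V$ carries the pointwise topology and the metric of uniform convergence weighted by $\mu$), Lemma~\ref{lemaSzlenk} lets us transfer the derivative: $X_\mu$ is WOH iff $(K_\mu)_2' = K_\mu$. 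So the first step is to reformulate the statement purely in terms of the compact set $K_\mu \subset [-1,1]^V$ and the metric induced there, exactly as was done in Section~\ref{section:compleB}, thereby removing any explicit reference to $X_\mu^*$.

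Next I would unwind the equation $(K_\mu)_2' = K_\mu$ into logical quantifiers over the countable set $V$ and over rationals. The inclusion $(K_\mu)_2' \subset K_\mu$ is automatic, so the content is $K_\mu \subset (K_\mu)_2'$: for every $g \in K_\mu$ and every basic $w^*$-neighbourhood $U$ of $g$, one has $\diam(U \cap K_\mu) \geq 2$, i.e.\ for every $\eps>0$ there exist $g_1, g_2 \in U \cap K_\mu$ with $\sup_{u\in V}\abs{\mu(u)g_1(u)-\mu(u)g_2(u)} > 2-\eps$ — which, since the dual norm is the weighted sup over $V$, is the $K_\mu$-version of ``two points far apart''. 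Using Lemma~\ref{Lema4} the condition ``$g \in K_\mu$'' (and ``$g_1,g_2\in K_\mu$'') is closed in $\cB\times\V$, hence Borel; in $\cPi$ one first pushes forward along the $\Sigma_2^0$-measurable map $\Phi\colon\cPi\to\cB$ of \cite[Proposition 3.6]{cddk1} (as already observed at the start of Section~\ref{section:complepinfinito}) and works in $\cB$. The ``$\forall g\in K_\mu\,\forall U\,\forall\eps\,\exists g_1,g_2$'' pattern, with the innermost matrix being a countable combination of open and closed conditions in $\mu$ and in the finitely many coordinates of $g,g_1,g_2$, is — after a compactness argument over the $[-1,1]$-valued coordinates, as in Theorem~\ref{D2Pteo} — of the form: a countable intersection (over $\eps$, over neighbourhood data, over $u$) of countable unions (over the witnesses) of closed-or-open sets. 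Careful bookkeeping of the quantifier alternation $\forall(\exists\,\text{countable}\,(\forall\ldots))$ gives complexity $\Pi^0_3 = \Fsd$; the projection over the compact parameter space $\V$ (or $\Vu{n}$) preserves this class exactly as in the proof of Theorem~\ref{D2Pteo}.

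\textbf{The main obstacle} I anticipate is precisely the level-counting in the last step: writing $(K_\mu)_2'=K_\mu$ so that the result lands in $\Fsd$ and not merely in $\Pi^0_4$ or in some coanalytic class. The delicate points are (i) ensuring the membership predicate ``$g\in K_\mu$'' contributes only a closed (not merely $G_\delta$) condition — this is exactly what Lemma~\ref{Lema4} buys us, and it is the reason the argument runs in $\cB$ rather than $\cPi$ directly; (ii) handling the supremum defining the diameter, which a priori is a ``$\forall u$'' inside an ``$\exists$'' and could cost a level, but can be absorbed because ``$\sup_u \geq 2-\eps$'' unwinds to ``$\exists u\ \abs{\cdots}>2-\eps$'', lowering the cost; and (iii) the outer ``$\forall g\in K_\mu$'', a universal quantifier over a compact fibre, which must be turned into a projection of a Borel set over the compact space $\V$ and checked to keep us inside $\Fsd$. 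Once these three points are organised as in Theorems~\ref{D2Pteo} and~\ref{LD2Pteo}, the conclusion that $\WOH$ is $\Fsd$ in $\cB$, and hence (pulling back along $\Phi$) in $\cPi$, follows; note we do not claim completeness here, only the upper bound, consistent with the open problem about $t=1$ discussed before the statement.
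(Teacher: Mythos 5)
Your reduction to $(K_\mu)_2'=K_\mu$ via Theorem~\ref{CardualWOH} and Lemma~\ref{lemaSzlenk} matches the paper's first step, but the level-counting that follows has a genuine gap, and it is exactly the obstruction the paper warns about at the start of Section~\ref{section:complepinfinito}. After unwinding, your condition has the shape $\forall g\in K_\mu\ \forall(\text{countable data})\ \exists g_1,g_2\in K_\mu\cap U\ (\dots)$, where \emph{both} the outer universal and the inner existential range over the compact fibre $\V$. The inner matrix is (closed)$\,\cap\,$(open), hence $F_\sigma$, and its projection along the compact factor is again $F_\sigma$ -- so far so good. But the final co-projection $\pi^c_{\cB}$ is then applied to an $F_\sigma$ set, and the co-projection along a compact factor of an $F_\sigma$ set is in general only coanalytic: its complement is a $G_\delta$ in $\cB\times\V$, and projections of $G_\delta$ subsets of a product with a compact factor exhaust the analytic sets (since $\omega^\omega$ embeds as a $G_\delta$ into $[0,1]$, every closed subset of $\cB\times\omega^\omega$ becomes a $G_\delta$ subset of $\cB\times[0,1]$). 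The reason Theorem~\ref{D2Pteo} survives this step is that there the inner existential witnesses $v,w$ are drawn from the \emph{countable} set $V$ by density of $V\cap\interior{B_{X_\mu}}$ in $B_{X_\mu}$, so the matrix stays $G_\delta$ and the single co-projection lands in $G_\delta$. Here the witnesses are dual functionals, for which no uniformly definable countable dense set is available; your point (iii) identifies the right danger but the proposed fix (``as in Theorems~\ref{D2Pteo} and~\ref{LD2Pteo}'') does not apply, because those arguments never co-project an $F_\sigma$ matrix. A second, independent loss occurs in your treatment of $\cPi$: pulling back along the $\Sigma^0_2$-measurable map $\Phi\colon\cPi\to\cB$ costs one Borel level (preimages of $\Pi^0_3$ sets under $\Sigma^0_2$-measurable maps are only $\Pi^0_4$), so even a correct $\Fsd$ bound in $\cB$ would not transfer to $\Fsd$ in $\cPi$ this way.

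The paper's proof circumvents both problems by outsourcing the quantitative content to \cite{cddk2}: there is a \emph{continuous} map $\Omega\colon\cP\to\mathcal{K}(B_{\ell_\infty},w^*)$ realizing every dual ball $B_{X_\nu^*}$ isometrically and $w^*$-homeomorphically as the compact set $\Omega(\nu)$, and the Szlenk-derivation map $s_2(K)=K_2'$ on the hyperspace is $\Sigma^0_3$-measurable (\cite[Lemma 4.10]{cddk2}). Writing $\WOH=\varphi^{-1}(\Delta)$ with $\varphi=(s_2\circ\Omega,\Omega)$ and $\Delta$ the (closed) diagonal gives the $\Pi^0_3=\Fsd$ bound at once, and since $\Omega$ is defined on all of $\cP$ the argument runs directly in either $\cPi$ or $\cB$ with no transfer map. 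To repair your argument you would essentially have to reprove the $\Sigma^0_3$-measurability of $K\mapsto K_2'$ in your $K_\mu$-coordinates, i.e.\ replace the existential quantifier over $K_\mu\cap U$ by countable data; that is the actual content of the cited lemma and is not supplied by the bookkeeping you describe.
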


\begin{proof}
    By Theorem \ref{CardualWOH} we know that $\mu \in \WOH$ iff $X_\mu^*$ has the $w^*$-D2P, and by (\ref{equivszlenk}) this is equivalent to
    \begin{align}\label{WOHteo1}
        \left( B_{X_\mu^*} \right)_2' = B_{X_\mu^*}.
    \end{align}

    By \cite[Lemmas 4.7, 4.9]{cddk2} there is a continuous map $\Omega: \cP \ra \mathcal{K}(B_{\ell_\infty}, w^*)$ such that for every $\nu \in \cP$ there exists a map $T_\nu: B_{X_\nu^*} \ra \Omega(\nu)$ which is an isometry and a $w^*$-$w^*$-homeomorphism. Then,  its restriction to $\cPi$ or $\cB$ will also be continuous and, by Lemma \ref{lemaSzlenk}, we obtain that (\ref{WOHteo1}) is equivalent to
    \begin{align}\label{WOHteo2}
        \left( \Omega(\mu) \right)_2' = \Omega(\mu).
    \end{align}
    Lastly, if we denote by $s_2:\mathcal{K}(B_{\ell_\infty}, w^*) \ra \mathcal{K}(B_{\ell_\infty}, w^*)$ the map given by
    $$s_2(K) = K_2'$$
    for every $K \in \mathcal{K}(B_{\ell_\infty}, w^*)$, by (\ref{WOHteo2}) we obtain that
    \begin{align}\label{WOHteo3}
        \WOH = \{\mu \in \cI: s_2(\Omega(\mu))=\Omega(\mu)\}.
    \end{align}

    By \cite[Lemma 4.10]{cddk2} $s_2$ is a $\Sigma_3^0$-measurable map so, as $\Omega$ is continuous, we have that $s_2 \circ \Omega|_{\cI}$ is also $\Sigma_3^0$-measurable, so the map $\varphi: \cI \ra \mathcal{K}(B_{\ell_\infty}, w^*) \times \mathcal{K}(B_{\ell_\infty}, w^*)$ given by
    $$\varphi(\mu) = \left( (s_2 \circ \Omega)(\mu), \Omega(\mu) \right)$$
    for every $\mu \in \cI$, will also be $\Sigma_3^0$-measurable.
    
    Lastly, we just need to observe that by (\ref{WOHteo3}) we have that
    $$\WOH = \varphi^{-1}(\Delta)$$
    with $\Delta = \{(K,K): K \in \mathcal{K}(B_{\ell_\infty}, w^*)\}$ and, as $\Delta$ is closed in the product of $\mathcal{K}(B_{\ell_\infty}, w^*)$, we conclude that in fact $\WOH$ is $\Fsd$ in $\cI$.
\end{proof}

\section{Comments and open questions}

We stated in Section \ref{section:complepinfinito} that we were unable to prove if the analogue of the set $K$ is an $F_\sigma$ set in $\cPi \times \V$, but in the following remark we prove that it is a $G_\delta$ not closed set which, in some way, justifies the search for another approach to compute the complexity classes in the space $\cPi$ as we did in the mentioned section.

\begin{remark}\label{Knocerrado}
The analogue of the functions $T_\mu$ for $\mu \in \cPi$ are defined in the same way but they are null for every $u \in N_\mu$ and $f$ must be evaluated in the corresponding equivalent class. We will denote in this remark by $K_\mu$ the image of this maps.

Then, to prove that the set
$$K=\left\{ (\mu, g) \in \cPi \times \V: g \in K_\mu \right\}$$
is $G_\delta$ it is enough to notice that for every $\mu \in \cPi$ and $g \in \V$ we have that
\begin{align}
    g \in K_\mu \lra &  \forall u,v,w \in V \, \forall p, q \in \Q \\
     & [(pv+qw=u \ra p\mu(v)g(v) + q \mu(w)g(w)= \mu(u)g(u))\, \land \nonumber\\
     & ( \mu(u)=0 \ra g(u)=0)],\nonumber
\end{align}
and we will see that it is not closed by giving an explicit example of a convergent sequence in $K$ whose limit is not in the set.

For every $n \in \N$ we can define a seminorm in $\R^V$ by
\begin{align}
    \mu_n\left( \sum_{m=1}^\infty q_m e_m \right) = \abs{\dfrac{1}{n} q_1 + q_2} + \sum_{m=3}^\infty \abs{q_m},
\end{align}
and observe that $\mu_n \in \cPi \setminus \cB$ because we have that
$$\mu_n\left(e_1- \dfrac{1}{n} e_2\right) =0$$
and the set $\{\overline{e_1}, \overline{e_3}, \cdots, \overline{e_m}\}$ is linearly independent in $X_{\mu_n}$ for every $m \geq 3$.

Then, we can define for every $n \in \N$ the map $f_n: \spann\{\overline{e_m}\}_{m \in \N} \ra \R$ as the unique linear map such that
\begin{align}
    f_n(\overline{e_1}) & =\dfrac{1}{n}\\
    f_n(\overline{e_m}) & = 1, \, \forall m \geq 3,
\end{align}
which can easily be seen to have norm $1$, and extend it by Hahn-Banach to a functional $F_n$ of norm $1$ in $X_{\mu_n}$. Associated with each $F_n$ we define the maps $g_n=T_{\mu_n}(F_n)$, and in that way we obtain the sequence $\{(\mu_n, g_n)\}$ which is contained in $K$ and it converges to the pair $(\mu, g) \in \cPi \times \V$ given by
\begin{align}
    \mu\left( \sum_{m=1}^\infty q_m e_m \right) = \sum_{m=2}^\infty \abs{q_m},
\end{align}
and
\begin{align}
    g\left( \sum_{m=1}^\infty q_m e_m \right) = \left\{ \begin{array}{lcl}
    0 & \text{if} & \forall m \in \N\, q_m=0\\\\
    \dfrac{\sum_{m=2}^\infty q_m}{\sum_{m=2}^\infty \abs{q_m}} & \text{if} & \exists m >1\, q_m \not = 0\\\\
    \dfrac{q_1}{\abs{q_1}} & \text{if} & q_1 \not = 0 \land \forall m > 1 \, q_m = 0
    \end{array} \right.
\end{align}

It is then clear that $(\mu, g) \not \in K$ because in that case $g$ would be null for every $u \in N_\mu$, but we have that $\mu(e_1)=0$ and $g(e_1)=1$. This proves that $K$ is not closed. $\blacksquare$
\end{remark}

For these reasons, we were not able to directly study the isometry classes for the D2P and the DD2P in $\cPi$ and we left as an open question if this two classes are also $G_\delta$ in $\cPi$. In the case of the DD2P this is a very interesting question because a negative answer would prove that the DLD2P and the DD2P are different properties which, up to our knowledge, is an open question posed in \cite[Question 4.1]{blrdiametral}.

Lastly, as mentioned in Section \ref{section:compleocta}, it is open if a weakly octahedral space $X$ can be characterized as one satisfying that for any $\eps>0$, $n \in \N$, $x_1, \cdots, x_n \in S_X$ and $f \in B_{X^*}$, there is a $y \in S_X$ such that for any $i \in \{1, \cdots, n\}$ we have that
\begin{align}
    \norm{x_i+y} \geq (1-\eps)\left( \abs{f(x_i)}+1 \right).
\end{align} 
Because of this, it is also an open question to get the exact complexity for weakly octahedral spaces. We were able to obtain a bound in Theorem \ref{WOHteo} by using Szlenk derivatives, but this method does not seem to be able to produce a better result because of the comment previous to \cite[Lemma 4.10]{cddk2} which asserts that the map $s_2$ used in the proof is almost optimal.

\section*{Acknowledgements}  
We thank Miguel Mart\'in for fruitful conversations on the topic of the paper.


\begin{thebibliography}{999999}

\bibitem {ahntt} T. A.~Abrahamsen, P.~H\'ajek, O.~Nygaard, J.~Talponen and S.~Troyanski, \textit{Diameter 2 properties and convexity}, Stud. Math. \textbf{232}, 3 (2016), 227--242.

\bibitem {almt21} T.\ A.\ Abrahamsen, V.\ Lima, A.\ Martiny, and S.\ Troyanski, \textit{Daugavet- and delta-points in Banach spaces with unconditional bases}, Trans. Amer. Math. Soc. Ser B (2021), 379--398.

\bibitem  {aln14} T. A. Abrahamsen, V. Lima, O. Nygaard. \textit{Almost isometric ideals in Banach spaces}. Glasg. Math. J., 2014, vol. 56, no 2, p. 395-407.

\bibitem {blradv} J. Becerra Guerrero, G. L\'opez-P\'erez and A. Rueda Zoca, \textit{Extreme differences between weakly open subsets and convex combination of slices in Banach spaces}, Adv. Math. \textbf{269} (2015), 56--70.

\bibitem {blrdiametral} J.~Becerra Guerrero, G.~L\'opez-P\'erez and A.~Rueda Zoca, \textit{Diametral diameter two properties in Banach spaces}, J. Convex Anal. \textbf{25}, 3 (2018), 817--840.


\bibitem  {BeLoRu} J. Becerra Guerrero, G. L\'opez-P\'erez, A. Rueda Zoca.  \textit{Octahedral norms and convex combinations of slices in Banach spaces }.  J. Funct. Anal., 2014, vol. 266, no 4, p. 2424-2435.




\bibitem{bos} B. Bossard. \textit{A coding of separable Banach spaces. Analytic and coanalytic families of Banach spaces}. Fundamenta Mathematicae, 2002, vol. 2, no 172, p. 117-152.

\bibitem{bos1} B. Bossard. \textit{Codages des espaces de Banach séparables. Familles analytiques ou coanalytiques d’espaces de Banach}. C. R. Acad. Sci. Paris Sér. I Math., 1993, vol. 316, no 10, p. 1005–1010.



\bibitem{boslo} B. Bossard, G. L\'opez. \textit{The point of continuity property: descriptive complexity and ordinal index}. Serdica Math. Journal, 1998, vol. 24, no 2, p. 199-214.

\bibitem{cddk1} M. Cúth, M. Dole\v{z}al, M. Doucha, O. Kurka. \textit{Polish spaces of Banach spaces}. Forum Math., Sigma, 2022;10:e26.

\bibitem{cddk2} M. Cúth, M. Dole\v{z}al, M. Doucha, O. Kurka. \textit{Polish spaces of Banach spaces: complexity of isometry and isomorphism classes}. J. Inst. Math. Jussieu, 2023, p. 1-39.

\bibitem{daugavet} I. K. Daugavet. \textit{A property of completely continuous operators in the space C}. Uspekhi Mat. Nauk, 1963, vol. 18, no 5, p. 157 (Russian).

\bibitem{dodos} P. Dodos. \textit{Banach spaces and descriptive set theory: selected topics}. Lecture Notes in Mathematics, 1993, Springer-Verlag, 2010.

\bibitem{gk} J. Garbuli\'{n}ska, W. Kubís. \textit{Remarks on Gurari\u{\i} spaces}. Extracta Math., 2011, vol. 26, no 2, p. 235-269.

\bibitem {ggms} N. Ghoussoub, G. Godefroy, B. Maurey, W. Schachermayer. \textit{Some topological and geometrical structures in Banach spaces}. Mem. Amer. Math. Soc., 1987, vol. 70, no 378.

\bibitem{gm} G. Godefroy, B. Maurey. \textit{Normes lisses et normes anguleuses sur
les espaces de Banach séparables}. Unpublished preprint.

\bibitem{gs-r} G. Godefroy. J. Saint-Raymond. \textit{Descriptive complexity of some isomorphism classes of Banach spaces}. J. Funct. Anal., 2018, vol. 275, no 4, p. 1008-1022.

\bibitem {hlpv} R. Haller, J. Langemets, Y. Perreau and T. Veeorg, \textit{Unconditional bases and Daugavet renormings}, J. Funct. Anal. \textbf{286}, 12 (2024), article 110421.

\bibitem{hlp} R. Haller, J. Langemets, M. P\~{o}ldvere. \textit{On duality of diameter 2 properties}. J. Convex Anal., 2015, vol. 22, no 2, p. 465–483.

\bibitem {ivakhno} Y. Ivakhno. \textit{Big slice property in the spaces of Lipschitz functions}. Visn. Khark. Univ., Ser. Mat. Prykl. Mat. Mekh, 2006, vol. 749, p. 109-118.

\bibitem{kssw} V. M. Kadets, R. V. Shvidkoy, G. G. Sirotkin and
D. Werner. \textit{Banach spaces with the Daugavet property}. Trans. Amer. Math. Soc., 2000, vol. 352, no 2, p. 855-873.

\bibitem{kech} A. S. Kechris. \textit{Classical descriptive set theory}. Graduate Texts in Mathematics, 156, Springer-Verlag, 1995.

\bibitem {ll21} J. Langemets and G. L\'opez-P\'erez, \textit{Bidual octahedral renormings and strong regularity in Banach spaces}, J. Inst. Math. Jussieu \textbf{20}, 2 (2021), 569--585.

\bibitem {lr20} J. Langemets and A. Rueda Zoca, \textit{Octahedral norms in duals and biduals of Lipschitz-free spaces}, J. Funct. Anal. 279, 3 (2020), 108557.

\bibitem{lmr} G. López-Pérez, M. Martín, A. Rueda Zoca. \textit{Strong diameter two property and convex combinations of slices reaching the unit sphere}. Mediterr. J. Math., 2019, vol. 16, no 5, p. 122.

\bibitem{lr21} G. L\'opez-P\'erez and A. Rueda Zoca, \textit{L-orthogonality, octahedrality and Daugavet property in Banach spaces}, Adv. Math. 383 (2021), 107719

\bibitem{lt} J. Lindenstrauss, L. Tzafriri. \textit{Classical Banach spaces I}. Springer-Verlag, 1977.

\bibitem {mpr23} M. Mart\'in, Y. Perreau and A. Rueda Zoca, \textit{Diametral notions for elements of the unit ball of a Banach space}, accepted in Diss. Math. Preprint version available at ArXiV.org with reference \href{https://arxiv.org/abs/2301.04433}{arXiv:2301.04433}.

\bibitem {rueda} A. Rueda Zoca, \textit{Diameter, radius and Daugavet index thickness of slices in Banach spaces}, accepted in Israel J. Math. Preprint version available at ArXiV.org with reference \href{https://arxiv.org/abs/2306.01467}{arXiv:2306.01467}.

\bibitem{pirk} K. Pirk. \textit{Diametral diameter two properties, Daugavet-, and $\Delta$-points in Banach spaces}, Dissertationes Mathematicae Universitatis Tartuensis, 2020, vol. 133.

\bibitem{werner} D. Werner. \textit{Recent progress on the Daugavet property}. Irish Math. Soc. Bulletin, 2001, vol. 46, p. 77–97.










\end{thebibliography}
\end{document}